\newtheorem{theorem}{Theorem}
\newtheorem{lemma}[theorem]{Lemma}
\newtheorem{corollary}[theorem]{Corollary}
\theoremstyle{definition}
\newtheorem{assumption}{Assumption}
\newtheorem{definition}[theorem]{Definition}
\newtheorem{remark}[theorem]{Remark}
\newcommand{\R}{\mathbb{R}}
\newcommand{\N}{\mathbb{N}}
\DeclareMathOperator{\dist}{dist}
\DeclareMathOperator{\prox}{prox}
\DeclareMathOperator*{\argmin}{argmin}
\DeclareMathOperator{\dom}{dom}
\title{\textbf{An Inexact Regularized Proximal Newton Method without Line Search}}
\author{Simeon vom Dahl\thanks{University of Würzburg, Institute of Mathematics, Emil-Fischer-Str. 30, 97074 Würzburg, Germany; simeon.vomdahl@uni-wuerzburg.de} \and Christian Kanzow\thanks{University of Würzburg, Institute of Mathematics, Emil-Fischer-Str. 30, 97074 Würzburg, Germany;
christian.kanzow@uni-wuerzburg.de}}
\date{\today}
\begin{document}
\maketitle

\textbf{Abstract.} In this paper, we introduce an inexact regularized proximal Newton method (IRPNM) that does not require any line search. The method is designed to minimize the sum of a twice continuously differentiable function $f$ and a convex (possibly non-smooth and extended-valued) function $\varphi$. Instead of controlling a step size by a line search procedure, we update the regularization parameter in a suitable way, based on the success of the previous iteration. The global convergence of the sequence of iterations and its superlinear convergence rate under a local Hölderian error bound assumption are shown. Notably, these convergence results are obtained without requiring a global Lipschitz property for $ \nabla f $, which, to the best of the authors' knowledge, is a novel contribution for proximal Newton methods. To highlight the efficiency of our approach, we provide
numerical comparisons with an IRPNM using a line search globalization 
and a modern FISTA-type method. \\

\textbf{Keywords.} nonsmooth and nonconvex optimization; regularized proximal Newton method; global and local convergence; Hölderian local error bound \\

\textbf{AMS Subject Classifications.} 49M15, 65K10, 90C26, 90C30, 90C55


\section{Introduction}

We are interested in solving the composite optimization problem
\begin{equation}
\min_{x \in \R^n} F(x):=f(x)+\varphi(x) \quad \text{with } f(x) := \psi(Ax-b),
\label{problem}
\end{equation}
where $A \in \R^{m \times n}$ and $b \in \R^m$ represent some given
data, and with
$\psi \colon \R^m \to \overline \R := \R \cup \{ \infty\}$ and $\varphi \colon \R^n \to \overline \R$ being proper lower semicontinuous (lsc) functions 
satisfying the following conditions.

\begin{assumption}
\begin{enumerate}[label=(\alph*), ref=\ref{assumption_general}(\alph*)]
\item $\psi$ is twice continuously differentiable on an open set containing $A(\Omega)-b$, where $\Omega \supseteq \dom\varphi$ is a closed subset of $\R^n$, \label{ass_psi}
\item $\varphi$ is convex and continuous on its domain $\dom \varphi$, \label{ass_phi}
\item $F$ is bounded from below, i.e., $F^*:=\inf_{x \in \R^n} F(x) > -\infty$. \label{ass_inf_F}
\end{enumerate}
\label{assumption_general}
\end{assumption}

\noindent
From this structure, it is clear that the objective function $F \colon \R^n \to \overline{\R}$ is also proper and lower semicontinuous, but possibly nonsmooth and nonconvex. Assumption \ref{ass_psi} and the chain rule guarantee that $f$ is twice continuously differentiable on an open set containing $\Omega$ with 
\begin{equation}
\label{eq_grad_hess_f}
\nabla f(x) = A^\top \nabla \psi(Ax-b), \quad \nabla^2 f(x) = A^\top \nabla^2 \psi(Ax-b) A \quad \text{for all } x \in \Omega.
\end{equation}
Note that model \eqref{problem} along with the above assumptions is almost the same as in \cite{liu_pan_wu_yang_2024}. The only difference lies in assumption \ref{ass_inf_F}, where \cite{liu_pan_wu_yang_2024} requires coerciveness of $F$ instead of boundedness from below. Note that this coercivity is a much stronger condition. In particular, together with the assumed lower
semicontinuity assumption, it implies that all sublevel sets are compact,
so that \eqref{problem} has a compact set of minimizers. Moreover,
it guarantees global Lipschitz continuity of the gradient $\nabla f$ on all sublevel sets of $F$. The elimination of the coercivity requirement on $F$ is therefore significant. 

Problems of type \eqref{problem} frequently arise in various fields, including statistics, machine learning, image processing, and many others. Notably, the well-known LASSO problem, as introduced by Tibshirani in \cite{tibshirani_1996}, represents a special (convex) instance of \eqref{problem}.
Applications to compressive sensing problems are discussed in detail
in \cite{FoR-13}. Machine learning applications like low rank approximations
are extensively treated in the book \cite{Mar-19}, and dictionary
learning algorithms are surveyed in the monograph \cite{DuI-18}.
Matrix completion problems, both convex and nonconvex, have been extensively explored in the past \cite{marjanovic_solo_2012, yu_peng_yue_2019}. Additionally, \cite{bian_chen_2015} serves as a representative example of the numerous applications of \eqref{problem} in the field of image processing.

\subsection{Related Work}

Proximal methods have a long history, beginning with Martinet's proximal point algorithm \cite{martinet_1970, martinet_1972}. Later, Rockafellar generalized the theory and applied it to convex minimization problems \cite{rockafellar_1976, rockafellar_1976b}. The first proximal method for nonconvex problems of the form \eqref{problem} was the proximal gradient method introduced by Fukushima and Mine \cite{fukushima_mine_1981}. Subsequently, several proximal gradient methods emerged, including the well-known Iterative Shrinkage/Thresholding Algorithm (ISTA) and its accelerated version, FISTA, introduced by Beck and Teboulle \cite{beck_teboulle_2009}. New FISTA-type methods continue to be introduced, such as the recent example in \cite{liang_monteiro_2023} by Liang and Monteiro.


The idea of proximal Newton methods is to find in each step, for a current iterate $x^k$, an approximate minimizer $y^k$ of the subproblem
\begin{equation}
\min_x \hat q_k(x):=f(x^k)+\nabla f(x^k)^\top (x-x^k)+\frac{1}{2}(x-x^k)^\top G_k(x-x^k)+\varphi(x),
\label{subproblem}
\end{equation}
where $G_k$ is either the Hessian $\nabla^2 f(x^k)$ or a suitable approximation of the exact Hessian. The main difference to proximal gradient methods is the incorporation of second-order information, which leads to a faster convergence rate due to a better local approximation of the nonlinear function $ f $. On the other hand, iterative methods for the solution of the subproblem \eqref{subproblem} usually take longer due to the more complex nature of this
subproblem. In fact, note that the 
proximal Newton method reduces to the proximal gradient method if $G_k$ is a multiple of the identity matrix at each iteration, so that the proximal
gradient subproblem is (usually) easier to solve, in several applications
even analytically.

Stationary points of \eqref{problem} are given by the solutions of the generalized equation $0 \in \nabla f(x) + \partial \varphi (x)$, where $\partial \varphi (x)$ denotes the (convex) subdifferential of $\varphi$ at $x$, and this inclusion can be rewritten as 
\begin{equation*}
	r(x)= 0
\end{equation*}
for a certain residual function, see \eqref{Eq:Residual-function} below
for the precise definition. Similary, the stationary conditions 
of the subproblems \eqref{subproblem} reduce to the solution of the partially linearized generalized equation at iterate $x^k$:
\begin{equation}
0 \in \nabla f(x^k) + G_k(x-x^k) + \partial \varphi (x).
\label{eq_par_lin_gen_eq}
\end{equation}
Various results on the convergence of iterative methods for solving \eqref{eq_par_lin_gen_eq} can be found in the literature. Fischer \cite{fischer_2002} proposes a very general iterative framework for solving generalized equations and proves local superlinear and quadratic convergence of the resulting iterates under an upper Lipschitz continuity assumption of the solution set map of a perturbed generalized equation. Early proximal Newton methods were designed for special instances of \eqref{problem}, mostly with convex $\psi$ and $\varphi$ such as GLMNET \cite{friedman_hoefling_tibshirani_2007,friedman_hastie_tibshirani_2010} and newGLMNET \cite{yuan_ho_lin_2012} for generalized linear models with elastic-net penalties, QUIC \cite{hsieh_dhillon_ravikumar_sustik_2011} for the $l_1$-regularized Gaussian maximum likelihood estimator and the Newton-Lasso method \cite{oztoprak_nocedal_rennie_olsen_2012} for the sparse inverse covariance estimation problem. 

Lee et al. \cite{lee_yuekai_saunders_2014} were the first to propose a generic version of the exact proximal Newton method for \eqref{problem} with convex $f$. They assume that $\nabla f$ is Lipschitz continuous and show global convergence under the uniform positive definiteness of $\{G_k\}$ and local quadratic convergence under the strong convexity of $f$ and the Lipschitz continuity of $\nabla^2 f$. Byrd et al. \cite{byrd_nocedal_oztoprak_2016}, considering \eqref{problem} with the $l_1$-regularizer $\varphi(x) = \lambda \Vert x \Vert_1$, propose an implementable inexactness criterion for minimizing $\hat q_k$ while achieving global convergence, and local fast convergence results under similar assumptions to \cite{lee_yuekai_saunders_2014}. Their global convergence theory also works for nonconvex $f$. Yue et al.\ \cite{yue_zhou_so_2019} used the inexactness criterion and the line search procedure of \cite{byrd_nocedal_oztoprak_2016} to develop an inexact proximal Newton method with a regularized Hessian and proved its local superlinear and quadratic convergence under the Luo-Tseng error bound condition \cite{luo_tseng_1992}, which is significantly weaker than the strong convexity assumption on $f$. Mordukhovich et al.\ \cite{mordukhovich_2023} further improve on \cite{yue_zhou_so_2019} by eliminating an impractical assumption where the parameters of their method satisfy a condition involving a constant that is difficult to estimate. They also prove local superlinear convergence under the metric $q$-subregularity of $\partial F$ for $q \in \left( \frac{1}{2},1 \right)$, a condition even weaker than the Luo-Tseng error bound. Their entire analysis, however, concentrates
on convex functions $ f $.

While proximal Newton-type methods for problem \eqref{problem} with convex $f$ have been extensively explored in the past, there has been limited research to date on the case where $f$ is nonconvex. In the previously referenced paper \cite{byrd_nocedal_oztoprak_2016}, global convergence was established with nonconvex $f$ and the $l_1$-regularizer, albeit still requiring a strong convexity assumption on 
$f$ for the local convergence theory. Lee and Wright \cite{lee_wright_2019} investigated an inexact proximal Newton method, presenting a sublinear global convergence rate result on the first-order optimality condition for general choices of $G_k$, with the sole assumption of $\nabla f$ being Lipschitz continuous. Combining the advantages of proximal Newton and proximal gradient methods, Kanzow and Lechner \cite{kanzow_lechner_2021} introduced a globalized inexact proximal Newton method (GIPN). In this approach, a proximal gradient step is taken whenever the proximal Newton step fails to satisfy a specified sufficient decrease condition. They proved global convergence with a local superlinear convergence rate under the local strong convexity of $F$ and uniformly bounded positive definiteness of $G_k$.
Inspired by the work \cite{ueda_yamashita_2010} for smooth nonconvex optimization problems, Liu et al.\ \cite{liu_pan_wu_yang_2024} extended the theory of \cite{mordukhovich_2023} to the case of \eqref{problem}, where $f$ is allowed to be nonconvex. Instead of the metric $q$-subregularity on $\partial F$, they assumed that accumulation points of the iterate sequence satisfy a Hölderian local error bound condition on the set of so-called strongly stationary points  to show convergence of the iterates with a local superlinear convergence rate. They achieve a local superlinear convergence rate without $F$ being locally strongly convex. However, they require that $F$ is level-bounded. 

All aforementioned works employed a proximal Newton-type method in conjunction with an appropriate line search strategy for global convergence. There has been minimal exploration of proximal Newton methods with alternative globalization strategies. Yamashita and Ueda \cite{ueda_yamashita_2014} investigated regularized Newton methods for smooth unconstrained problems, achieving global convergence by adjusting the regularization parameter based on the success of the previous iteration, similar to a trust-region scheme. As of the authors' knowledge, the method described in the PhD thesis \cite[Chapter 4]{lechner_2022} remains the only instance where this globalization strategy was applied within the framework of proximal Newton-type methods.

Historically, the global Lipschitz continuity of $\nabla f$ has been a standard assumption for the convergence analysis of proximal gradient and proximal Newton methods. While recent works have successfully eliminated this assumption for proximal gradient methods (see, for example, \cite{BST-14,kanzow_mehlitz_2022,jia_kanzow_mehlitz_2023,Marchi-23}), there are no known comparable results for proximal Newton methods.

\subsection{Our Contributions}

In this work, we present a proximal Newton method without a line search for problem \eqref{problem} under assumption \ref{assumption_general}. Building upon the selection in \cite{liu_pan_wu_yang_2024}, we employ the following expression as the regularized Hessian at iteration $x^k$:
\begin{equation}\label{Eq:Gk}
G_k = \nabla^2 f(x^k) + \Lambda_k A^\top A + \nu_k {\overline r_k}^\delta I
\end{equation}
with
\begin{equation}\label{Eq:Lambdak}
  \Lambda_k := a \left[ -\lambda_{min}\left( \nabla^2 \psi \left(Ax^k-b\right)\right)\right]_+, \quad a \geq 1, \quad
  \text{and} \quad \delta \in (0,1]. 
\end{equation}
Recall from \eqref{eq_grad_hess_f} that $ G_k $ can be rewritten as
\begin{equation*}
	G_k = A^\top \big( \nabla^2 \psi (Ax^k -b) + \Lambda_k I \big) A +
	\nu_k {\overline r_k}^\delta I,
\end{equation*}
hence the definition of $ \Lambda_k $ immediately implies that the matrix
$ G_k $ is positive definite (the first term is positive semidefinite). 
The only difference
to \cite{liu_pan_wu_yang_2024} resides in the final term, where the sequence $\{\overline r_k\}_{k \in \N_0}$ is recursively given by 
\begin{equation}
\overline r_0 := \Vert r(x^0) \Vert \text{ and } \overline r_{k+1} = \begin{cases}
\Vert r(\hat x^k) \Vert, & \text{if } \Vert r(\hat x^k) \Vert \leq \eta \overline r_k \\
\overline r_k, & \text{otherwise}
\end{cases}
\text{ for } k \in \N_0,
\end{equation}
with $\hat x^k$ being an approximate solution of subproblem \eqref{subproblem}, $\eta \in (0,1)$, and $ r $ is the residual function already mentioned before and
formally defined in \eqref{Eq:Residual-function} below. Additionally, the regularization parameter $\nu_k$ follows an update strategy akin to \cite{ueda_yamashita_2014} and \cite{lechner_2022}, detailed in Section~\ref{Sec:Algorithm}. Notably, the sequence $\{G_k\}$ is not uniformly  positive definite, since $\{\overline r_k\}$ converges to $0$, as clarified later.

We establish the global convergence of the iterate sequence and its convergence rate of $q(1+\delta)>1$, assuming the existence of an accumulation point that satisfies a local Hölderian error bound of order $q > \max\left\{\frac{1}{1+\delta}, \delta \right\}$ on the set of strongly stationary points. In comparison with \cite{liu_pan_wu_yang_2024}, our approach reproduces essentially the same convergence results, employing an update strategy for the regularization parameter instead of a line-search technique. Most notably, we eliminate the requirement for $F$ to be coercive. The coerciveness guarantees a compact minimizer set for \eqref{problem}, as well as the Lipschitz continuity of $\nabla f$ on all sublevel sets of $F$. It is noteworthy that this global Lipschitz continuity of the gradient of $f$ has been a standard assumption in order to prove convergence of the iterate sequence. To the best of the authors' knowledge, this work is the first to eliminate this assumption. 

Utilizing the dual semismooth Newton augmented Lagrangian method (SNALM) developed in \cite{liu_pan_wu_yang_2024} as a subproblem solver, we compare the performance of our method (IRPNM-reg) with the line search based inexact regularized proximal Newton method (IRPNM-ls) from \cite{liu_pan_wu_yang_2024} and AC-FISTA \cite{liang_monteiro_2023} on five distinct test problems. 

\subsection{Notation}

In this paper, $\N = \{1,2,3,...\}$ denotes the set of positive integers and we write $\N_0 := \N \cup \{0\}$. The extended real numbers are given by $\overline \R := \R \cup \{+\infty\}$. For $a \in \R$ we write $a_+ := \max(0,a)$. For $x \in \R^n$, $\Vert x \Vert$ represents the Euclidean norm, $B_\varepsilon(x)$ stands for the closed ball around $x$ with radius $\varepsilon > 0$, and $\dist(x,C)$ denotes the Euclidean distance from $x$ to a closed set $C \subseteq \R^n$. The set $\mathbb{S}^n$ comprises all real symmetric matrices of dimension $n \times n$ and $\mathbb{S}_{++}^n$ is the set of all positive definite matrices in $\mathbb{S}^n$. For $M \in \mathbb{S}^n$, its spectral norm is denoted by $\Vert M \Vert$ and $M \succeq 0$ indicates that $M$ is positive semidefinite. The smallest eigenvalue of $M$ is denoted by $\lambda_{min}(M)$. The identity matrix is denoted by $I$, with its dimension being evident from the context. The domain of a function $g \colon \R^n \to \overline \R$ is defined as $\dom g := \left\{x \in \R^n \mid g(x) < \infty \right\}$ and $g$ is called proper if $\dom g \neq \emptyset$.

\section{Preliminaries}\label{Sec:Prelims}

This section summarizes some background material from variational 
analysis that will be important in our subsequent sections.

First of all, we denote by $\partial F(x)$ the \emph{basic (or limiting or
Mordukhovich) subdifferential} of $F$ at $x$, see the standard
references \cite{rock_wets_1998,Mor-18} for more details. Its precise definition plays no
role in our subsequent discussion since only some of its basic properties
will be used. In particular, it is known that, for convex functions, this basic subdifferential simplifies to the well-known convex subdifferential. 
Furthermore, according to \cite[Exercise 8.8(c)]{rock_wets_1998}, for any $x \in \dom\varphi$, it holds that $\partial F(x) = \nabla f(x) + \partial \varphi(x)$. 

Based on this notion, we now introduce two stationarity concepts, the first
one being the standard stationarity condition for composite optimization
problems, the second one being a stronger concept taken from  \cite{liu_pan_wu_yang_2024}.

\begin{definition}\label{Def:StatConditions}
A point $x \in \dom \varphi$ is called a
\begin{itemize}
	\item[(a)] \emph{stationary point} of problem \eqref{problem} if $0 \in \partial F(x) \ \big( = \nabla f(x) + \partial \varphi(x) \big) $;
	\item[(b)] \emph{strongly stationary point} of problem \eqref{problem} if
	it is a stationary point which, in addition, satisfies $\nabla^2\psi(Ax-b) \succeq 0$.
\end{itemize}
We denote by $S^*$ and $X^*$ the sets of all stationary and strongly stationary
points, respectively.
\end{definition}

\noindent
Note that Assumption~\ref{ass_psi} together with the outer semicontinuity of $\partial \varphi$ implies that the $\mathcal S^*$ and $\mathcal X^*$ are
closed. In contrast to the situation discussed in \cite{liu_pan_wu_yang_2024},
we stress that both $\mathcal S^*$ and $\mathcal X^*$ might be empty in our
setting due to the removal of the coerciveness assumption on $F$. We
further note that there might be stationary points (i.e., $\mathcal S^* \neq \emptyset$), while $\mathcal X^*$ is still empty. A local minimizer is 
always a stationary point, but is not guaranteed to be strongly stationary, and the converse may not be true either. 

Proximal Newton-type methods rely on the proximity operator. For a proper, lower semicontinuous and convex function $g \colon \R^n \to \overline{\R}$, the proximity operator $\prox_g \colon \R^n \to \R^n$ is defined by
\[
\prox_g(x):=\argmin_y\left\{g(y)+\frac{1}{2} \Vert y-x \Vert^2\right\}.
\]
The objective function $g(y)+\frac{1}{2}\Vert y - x \Vert^2$ is strongly convex on $\dom g$. This ensures a unique minimizer for every $x \in \R^n$, i.e., the proximity operator is well-defined. Moreover, the operator is nonexpansive, signifying Lipschitz continuity with constant one. It also satisfies the crucial relationship
\begin{equation}
	\label{eq_prox_subdiff}
	y = \prox_g(x) \Longleftrightarrow y \in x-\partial g(y),
\end{equation}
which shows that
\begin{equation}
	\label{eq_stat_point_char}
	x \in \mathcal S^* \Longleftrightarrow -\nabla f(x) \in \partial \varphi(x) \Longleftrightarrow x = \prox_\varphi(x-\nabla f(x)).
\end{equation}
Motivated by this, the \emph{residual or prox-gradient mapping} is defined by
\begin{equation}\label{Eq:Residual-function}
	r(x) := x - \prox_\varphi(x-\nabla f(x)), \quad x \in \R^n.
\end{equation}
Consequently, $x \in \R^n$ is a stationary point of $F$ if and only if $r(x)=0$. Hence, the norm of $r(x)$ can be used to measure the stationarity of $x$.

\section{The Algorithm and its Basic Properties}\label{Sec:Algorithm}

Consider a fixed iteration $k \geq 0$ with a current iterate $x^k \in \R^n$.
Then the core task of proximal Newton methods lies in solving the subproblem
\begin{equation}
\min_x q_k(x):=f(x^k)+\nabla f(x^k)^\top (x-x^k)+\frac{1}{2}(x-x^k)^\top \nabla^2f(x^k)(x-x^k)+\varphi(x).
\label{proximal_newton_cvx}
\end{equation}
The first part of $q_k$ provides a quadratic approximation of the smooth function $f$. However, since $f$ is not necessarily convex, $\nabla^2f(x^k)$ 
may not be positive semidefinite and, hence, $q_k$ may not be convex. To address this difficulty, we consider the matrix
\[
H_k := \nabla^2f(x^k) + \Lambda_kA^\top A
\]
with $ \Lambda_k $ defined in \eqref{Eq:Lambdak}. Recall from the discussion
following \eqref{Eq:Lambdak} that $ H_k $, simply by definition of
$ \Lambda_k $, is positive semidefinite. Furthermore, given some 
regularization parameter $ \mu_k > 0 $, the corresponding matrix $ G_k $
from \eqref{Eq:Gk}, which is given by
\begin{equation*}
	G_k = H_k + \mu_k I,
\end{equation*}
is then automatically positive definite. This implies that the resulting
subproblem
\begin{equation}
\min_x\hat q_k(x):=f(x^k)+\nabla f(x^k)^\top (x-x^k)+\frac{1}{2}(x-x^k)^\top G_k(x-x^k)+\varphi(x),
\label{reg_proximal_newton_cvx}
\end{equation} 
has a strongly convex objective function and, thus, a unique solution.
Throughout this paper, we write
\begin{equation*}
	\overline x^k := \mathrm{argmin}_x \hat q_k(x)
\end{equation*}
for this unique minimum. From a numerical point of view, computing
this minimum exactly might be very demanding, and we therefore require
an inexact solution only. We denote this inexact solution by $\hat x^k$.
In order to prove suitable global and local convergence results, this
inexact solution has to satisfy certain criteria which measure the 
quality of the inexact solution. Here, we assume that the inexact
solution $ \hat x^k $ is computed in such a way that the conditions
\begin{align}
\label{eq_inexactness}
\Vert R_k(\hat x^k) \Vert \leq \theta \min\left\{ \Vert r(x^k) \Vert, \Vert r(x^k) \Vert^{1+\tau} \right\} \text{ and } F(x^k)- \hat q_k(\hat x^k) \geq \frac{\alpha\mu_k}{2}\Vert \hat x^k - x^k \Vert^2
\end{align}
hold, where $\alpha, \theta \in (0,1)$ and $\tau \geq \delta$ are 
certain constants, and the residual $R_k$ is defined by
\begin{align*}
R_k(x) := x-\prox_{\varphi}\left(x-\nabla f(x^k) - (H_k+\mu_kI)(x-x^k)\right).
\end{align*}
Note that $ R_k $ is the counterpart of the residual $ r $ from 
\eqref{Eq:Residual-function} for the subproblem 
\eqref{reg_proximal_newton_cvx}. In particular, and similar to
\eqref{eq_stat_point_char}, a vector $x$ is an optimal solution of \eqref{reg_proximal_newton_cvx} if and only if $ R_k(x)  = 0$. This 
explains why the first condition from \eqref{eq_inexactness} serves as
an inexactness criterion. Regarding the second condition, we refer to 
Lemma~\ref{lemma_inexactness} below for a justification.

Typically, see the recent papers \cite{mordukhovich_2023} and \cite{liu_pan_wu_yang_2024}, these (regularized) proximal Newton-type methods are combined with an appropriate line search strategy to achieve global convergence. In this work, our objective is to attain global convergence by controlling the regularization parameter itself, depending on the success of the previous iteration. This idea has already been used in \cite{ueda_yamashita_2014} with a regularized Newton method for the minimization of a twice differentiable function. Recently, in the PhD thesis \cite{lechner_2022}, it has been established for proximal Newton methods in the composite setting. To assess the success of a candidate $\hat x^k$, we consider the ratio 
\begin{equation}
\rho_k:=\frac{\text{ared}_k}{\text{pred}_k}
\end{equation}
between the actual reduction
\begin{equation}
\text{ared}_k:=F(x^k)-F(\hat x^k)
\label{reg_pred_red_cvx}
\end{equation}
and the predicted reduction
\begin{equation}
\text{pred}_k:=F(x^k)-q_k(\hat x^k).
\label{reg_actual_red_cvx}
\end{equation}
It is important to note that for the predicted reduction, we use the unregularized approximation $q_k$ instead of $\hat q_k$. From the second condition in \eqref{eq_inexactness} it follows that
\begin{align}
\label{eq_pred}
\begin{split}
\text{pred}_k &= F(x^k) - q_k(\hat x^k)  = F(x^k) - \hat q_k(\hat x^k) + \frac{1}{2} (\hat x^k-x^k)^\top(\Lambda_k A^\top A+\mu_kI)(\hat x^k-x^k) \\
&\geq F(x^k) - \hat q_k(\hat x^k) + \frac{\mu_k}{2}\Vert \hat x^k - x^k \Vert^2 \geq \frac{\mu_k}{2}\Vert \hat x^k-x^k \Vert^2
\end{split}
\end{align}
for all $k \geq 0$. In particular the predicted reduction is positive if $x^k$ is not already a stationary point of \eqref{problem}.  This follows from the
following simple observation.

\begin{remark}\label{Rem:Positive}
If $ x^k = \hat x^k $, then $ x^k $ is already a stationary point
of \eqref{problem}. Hence, $ \text{pred}_k > 0 $ at all iterations $ k $
such that $ x^k $ is not already a stationary point.
\end{remark}

\begin{proof}
Let  $ x^k = \hat x^k $. Then the definitions of the corresponding 
residual functions yield $ R_k (\hat x^k) = R_k (x^k) = r(x^k) $. 
Since $ \theta \in (0,1) $, we then obtain $ r(x^k) = 0 $ from 
first inexactness test in \eqref{eq_inexactness}.
\end{proof}

\noindent
We are now ready to present our algorithm.

\begin{algorithm}[H]
\caption{Regularized proximal Newton method} 
\label{algo_proxRegNewton_cvx}
\begin{algorithmic}[1]
\State Choose $x^0 \in \dom\varphi$ and parameters $c_1 \in (0,1); \, c_2 \in (c_1,1); \, \sigma_1 \in (0,1); \, \sigma_2>1; \, \eta \in (0,1);\, \theta \in (0,1); \, \alpha \in (0,1); \, a \geq 1; \, 0 < \nu_{min} \leq \nu_0 \leq \overline\nu ; \, 0 < \delta \leq 1;\, \tau \geq \delta;\, p_{min} \in (0,1/2);\, \kappa > 1+\delta. $ Set $ k:= 0; \ \overline r_0 := \Vert r(x^0) \Vert; \, \mu_0 := \nu_0 \overline r_0^\delta.$
\For{$k=0,1,2,...$}
\State Compute an inexact solution $\hat x^k$ of the proximal regularized Newton subproblem \eqref{reg_proximal_newton_cvx} satisfying the inexactness criterion \eqref{eq_inexactness}. 
\State Set $d^k:=\hat x^k-x^k$.
\State Compute $\text{pred}_k$, $\text{ared}_k$ and $\rho_k$.
\If{$\text{pred}_k \leq p_{min}(1-\theta)\Vert d^k \Vert \min\{\Vert r(x^k) \Vert, \Vert r(x^k) \Vert^\kappa\}$ OR $\rho_k \leq c_1$}  
\State Set $x^{k+1}=x^k, \nu_{k+1}=\sigma_2\nu_k$.\Comment{unsuccessful iteration}
\Else
\State Set $x^{k+1} = \hat x^k$.
\If{$\rho_k \leq c_2$}
\State Set $\nu_{k+1} = \min\{\nu_k,\overline\nu\}$. \Comment{successful iteration} \label{algo_line_K_commands_1} 
\Else
\State Set $\nu_{k+1} = \min\{\max\{\sigma_1\nu_k,\nu_{min}\},\overline \nu \}$. \Comment{highly successful iteration} \label{algo_line_K_commands_2} 
\EndIf
\EndIf
\If{$\Vert r(x^{k+1}) \Vert \leq \eta\overline r_k$} \Comment{$k+1 \in \mathcal K$} \label{algo_line_K}
\State $\overline r_{k+1} = \Vert r(x^{k+1}) \Vert$. \label{algo_line_K_command}
\Else
\State $\overline r_{k+1} = \overline r_k$.
\EndIf
\State $\mu_{k+1}=\nu_{k+1}\overline r_{k+1}^\delta$.
\EndFor
\end{algorithmic}
\end{algorithm}

\noindent
The basic idea of Algorithm~\ref{algo_proxRegNewton_cvx} is to solve,
iteratively, the proximal regularized Newton 
subproblem \eqref{reg_proximal_newton_cvx}
and either to accept the inexact solution as the new iterate, 
provided that this makes a sufficient progress in the sense of the
tests in line 6, or to stay at the current point and enlarge the  regularization parameter. The steps between lines 10 and 20 are devoted to a very careful update of the parameter $ \nu_k $ as well as $\overline r_k$, hence
of the regularization parameter $ \mu_k $ in line 21, since this update
is essential especially for the local convergence analysis where we
prove fast local convergence under fairly mild assumptions.

In the remaining part, we state a number of basic properties which might,
partially, explain some of these careful updates.

\begin{lemma}
\label{lemma_r_dec}
\begin{enumerate}[(a)]
\item The sequence $\{\overline r_k\}$ is monotonically decreasing, \label{eq_lemma_r_dec_1}
\item For all $k \geq 0$ it holds that $\Vert r(x^k) \Vert > \eta \overline r_k$. \label{eq_lemma_r_dec_2}
\item For all $k \geq 0$ it holds that $\overline r_k \geq \min\{\Vert r(x^j) \Vert \mid 0 \leq j \leq k \}$.
\label{eq_lemma_r_dec_3}
\end{enumerate}
\end{lemma}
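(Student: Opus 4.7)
All three statements follow directly by unwinding the definition of the update
\begin{equation*}
\overline r_{k+1} = \begin{cases} \|r(x^{k+1})\|, & \text{if } \|r(x^{k+1})\| \le \eta \overline r_k, \\ \overline r_k, & \text{otherwise,} \end{cases}
\end{equation*}
combined with the initialization $\overline r_0 = \|r(x^0)\|$ and the fact that $\eta \in (0,1)$. I would prove (a) first and then use it implicitly in the other two parts. Throughout I assume, as is implicit in the algorithmic setting, that none of the iterates $x^k$ is already a stationary point (otherwise the algorithm would have terminated and the claims would be vacuous for later indices).

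\textbf{Part (a).} For any $k$, examine both branches of the update. If $\|r(x^{k+1})\| \le \eta \overline r_k$, then $\overline r_{k+1} = \|r(x^{k+1})\| \le \eta \overline r_k < \overline r_k$ because $\eta < 1$ and $\overline r_k > 0$. Otherwise $\overline r_{k+1} = \overline r_k$. In either case $\overline r_{k+1} \le \overline r_k$.

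\textbf{Part (b).} I would argue by induction on $k$. For $k=0$ the initialization gives $\|r(x^0)\| = \overline r_0 > \eta \overline r_0$, since $\overline r_0 > 0$ and $\eta < 1$. For the inductive step, distinguish the two branches of the update at iteration $k+1$. In the branch with $\|r(x^{k+1})\| \le \eta \overline r_k$ we have $\overline r_{k+1} = \|r(x^{k+1})\|$, so the desired inequality reduces to $(1-\eta)\|r(x^{k+1})\| > 0$, which follows from the non-stationarity of $x^{k+1}$. In the other branch $\overline r_{k+1} = \overline r_k$ and the very condition that triggered this branch is $\|r(x^{k+1})\| > \eta \overline r_k = \eta \overline r_{k+1}$.

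\textbf{Part (c).} This follows from the observation that at every iteration $\overline r_k$ coincides with $\|r(x^{j_0})\|$ for some $j_0 \in \{0,1,\dots,k\}$. Indeed, set $j_0 := 0$ initially; whenever the update replaces $\overline r_k$ by $\|r(x^{k+1})\|$, update $j_0 := k+1$; otherwise $j_0$ stays the same. A straightforward induction confirms $\overline r_k = \|r(x^{j_0})\|$ with $j_0 \le k$, and therefore $\overline r_k \ge \min_{0 \le j \le k} \|r(x^j)\|$.

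\textbf{Expected obstacle.} There is essentially none; the whole lemma is a bookkeeping exercise about the safeguard sequence $\{\overline r_k\}$. The only subtle point is the strictness of the inequality in (b), which forces us to rely on the implicit assumption that iterates are non-stationary (so $\|r(x^k)\| > 0$); I would state this convention explicitly at the beginning of the proof to keep everything rigorous.
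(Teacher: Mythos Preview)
Your proposal is correct and follows essentially the same approach as the paper: a direct case distinction on the two branches of the update rule for $\overline r_{k+1}$, with (b) and (c) handled by induction. Your treatment of (b) is in fact slightly cleaner than the paper's---the paper splits the ``otherwise'' branch further into unsuccessful versus successful iterations, which is unnecessary since the branch condition $\|r(x^{k+1})\|>\eta\overline r_k=\eta\overline r_{k+1}$ already gives the claim directly---and you are right to flag the non-stationarity assumption needed for the strict inequality in the first branch, a point the paper leaves implicit.
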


\begin{proof}
(a) We consider an iteration $k \geq 0$. If the condition $\Vert r(x^{k+1}) \Vert \leq \eta \overline r_k$ in line \ref{algo_line_K} of Algorithm \ref{algo_proxRegNewton_cvx} is satisfied, we get $\overline r_{k+1} = \Vert r(x^{k+1}) \Vert \leq \eta \overline r_k < \overline r_k$. Otherwise, the algorithm directly sets $\overline r_{k+1} = \overline r_k$. Combining these two cases shows that $\overline r_{k+1} \leq \overline r_k$. Hence, the sequence $\{\overline r_k\}$ is monotonically decreasing. \medskip

\noindent
(b) For $k=0$ this property obviously holds. Suppose now that this property holds for some $k \in \N_0$. If the condition in line \ref{algo_line_K} is satisfied at iteration $k$, the algorithm directly sets $\overline r_{k+1} = \Vert r(x^{k+1}) \Vert$. If $k$ is an unsuccessful iteration, then we get $\Vert r(x^{k+1}) \Vert = \Vert r(x^k) \Vert > \eta \overline r_k \geq \eta \overline r_{k+1}$ by using the induction hypothesis together with $x^{k+1}=x^k$ and \ref{eq_lemma_r_dec_1}. In the remaining case it holds that $\Vert r(x^{k+1}) \Vert > \eta \overline r_k$ and iteration $k$ is successful or highly successful. Then we get
\[
\Vert r(x^{k+1}) \Vert > \eta \overline r_k = \eta \overline r_{k+1}.
\]
The combination of the three cases yields the result. \medskip

\noindent
(c) For $k=0$ this property obviously holds. Suppose now that this property holds for some $k \in \N_0$. If the condition in line \ref{algo_line_K} is satisfied at iteration $k$ we get 
\[
\overline r_{k+1} = \Vert r(x^{k+1}) \Vert \geq \min\{\Vert r(x^j) \Vert \mid 0 \leq j \leq k+1\}.
\]
Otherwise it holds that
\[
\overline r_{k+1} = \overline r_k \geq \min\{\Vert r(x^j) \Vert \mid 0 \leq j \leq k \} \geq \min\{\Vert r(x^j) \Vert \mid 0 \leq j \leq k+1 \},
\]
where we used the induction hypothesis in the first inequality.   
\end{proof}

\noindent
The following result contains some estimates regarding the sequence
$ \{ \nu_k \} $ and the corresponding sequence $ \{ \mu_k \} $ of
regularization parameters.

\begin{lemma}
\label{lemma_alg_prop_mu}
For an iteration $k \geq 0$ it holds that
\begin{enumerate}[(a)]
\item $\nu_k \geq \nu_{min}$,
\item $x^{k+1} = x^k$, $\nu_{k+1}=\sigma_2 \nu_k > \nu_k$ and $\mu_{k+1} = \sigma_2\mu_k > \mu_k$, if $k$ is unsuccessful, \label{eq_lemma_alg_prop_mu_unsucc}
\item $x^{k+1} = \hat x^k$, $\nu_{k+1} \leq \nu_k$ and $\mu_{k+1} \leq \mu_k$, if $k$ is successful or highly
successful.
\label{eq_lemma_alg_prop_mu_succ}
\end{enumerate}
\end{lemma}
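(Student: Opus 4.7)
The statement is essentially an unpacking of the cases in Algorithm~\ref{algo_proxRegNewton_cvx}, so the plan is to do a case analysis on the branch taken at iteration $k$, using induction for (a) and Lemma~\ref{lemma_r_dec} for (b).

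For (a) I would argue by induction on $k$. The base case $\nu_0 \geq \nu_{min}$ is part of the initialization. For the step, assume $\nu_k \geq \nu_{min}$. In an unsuccessful iteration $\nu_{k+1} = \sigma_2 \nu_k \geq \nu_k \geq \nu_{min}$ because $\sigma_2 > 1$. In a successful iteration $\nu_{k+1} = \min\{\nu_k,\overline\nu\}$, and since both $\nu_k \geq \nu_{min}$ (induction) and $\overline\nu \geq \nu_0 \geq \nu_{min}$ by the parameter setup, the minimum is $\geq \nu_{min}$. In a highly successful iteration $\nu_{k+1} = \min\{\max\{\sigma_1 \nu_k,\nu_{min}\},\overline\nu\}$; the inner $\max$ is $\geq \nu_{min}$ by construction and $\overline\nu \geq \nu_{min}$, so the outer $\min$ remains $\geq \nu_{min}$.

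For (b), the identities $x^{k+1} = x^k$ and $\nu_{k+1}=\sigma_2\nu_k$ are read off directly from the unsuccessful branch, and $\sigma_2 \nu_k > \nu_k$ since $\sigma_2 > 1$. The only nontrivial point is the corresponding statement for $\mu_{k+1}$, and it reduces to showing $\overline r_{k+1} = \overline r_k$. This is where Lemma~\ref{lemma_r_dec}(b) enters: since $x^{k+1} = x^k$, we have $\Vert r(x^{k+1})\Vert = \Vert r(x^k)\Vert > \eta \overline r_k$, so the test in line~\ref{algo_line_K} fails and the algorithm sets $\overline r_{k+1} = \overline r_k$. Then $\mu_{k+1} = \nu_{k+1} \overline r_{k+1}^\delta = \sigma_2 \nu_k \overline r_k^\delta = \sigma_2 \mu_k > \mu_k$.

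For (c), $x^{k+1} = \hat x^k$ is immediate from the algorithm. In the successful branch $\nu_{k+1} = \min\{\nu_k,\overline\nu\} \leq \nu_k$ is trivial. In the highly successful branch, I would use $\sigma_1 < 1$ together with (a) in the form $\nu_{min} \leq \nu_k$ to get $\max\{\sigma_1 \nu_k,\nu_{min}\} \leq \nu_k$, so $\nu_{k+1} \leq \max\{\sigma_1\nu_k,\nu_{min}\} \leq \nu_k$. Combining $\nu_{k+1}\leq\nu_k$ with $\overline r_{k+1} \leq \overline r_k$ (Lemma~\ref{lemma_r_dec}(a)) yields $\mu_{k+1} = \nu_{k+1}\overline r_{k+1}^\delta \leq \nu_k \overline r_k^\delta = \mu_k$. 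There is no real obstacle here; the only point that requires care, rather than pure unpacking, is invoking Lemma~\ref{lemma_r_dec}(b) in (b) to rule out a change in $\overline r_k$ during an unsuccessful step.
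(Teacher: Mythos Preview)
Your proposal is correct and follows essentially the same approach as the paper's own proof: induction for (a), Lemma~\ref{lemma_r_dec}\ref{eq_lemma_r_dec_2} to obtain $\overline r_{k+1}=\overline r_k$ in (b), and Lemma~\ref{lemma_r_dec}\ref{eq_lemma_r_dec_1} together with (a) for (c). Your write-up simply spells out the case analysis in more detail than the paper does.
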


\begin{proof}
(a) This statement follows recursively from $\nu_0 \geq \nu_{min}$ and the possible updates for $\nu_k$ in the algorithm. \medskip

\noindent
(b) If $k$ is an unsuccessful iteration, it follows by definition of the algorithm that $x^{k+1}=x^k$ and $\nu_{k+1} = \sigma_2 \nu_k$. From Lemma \ref{lemma_r_dec}\ref{eq_lemma_r_dec_2} it immediately follows that $\Vert r(x^{k+1}) \Vert = \Vert r(x^k) \Vert > \eta \overline r_k$, hence $\overline r_{k+1} = \overline r_k$ and eventually $\mu_{k+1} = \nu_{k+1}\overline r_{k+1}^\delta = \sigma_2 \nu_k \overline r_k^\delta = \sigma_2\mu_k > \mu_k$. \medskip

\noindent
(c) If $k$ is a successful or highly successful iteration, it follows by definition of the algorithm and statement (a)
that $x^{k+1} = \hat x^k$ and $\nu_{k+1} \leq \nu_k$. Using Lemma \ref{lemma_r_dec}\ref{eq_lemma_r_dec_1}, we then get $\mu_{k+1} = \nu_{k+1}\overline r_{k+1}^\delta \leq \nu_k\overline r_{k}^\delta = \mu_k$.
\end{proof}


\noindent
In the following we consider the set $\mathcal K \subset \N_0$ of iterations 
\[
\mathcal K := \{0\} \cup \left\{k \in \N \mid \, \text{The if-condition in line \ref{algo_line_K} was satisfied at iteration $k-1$}\right\}.
\]
Several properties for the iterates $ k $ belonging to this set are
summarized in the next result.

\begin{lemma}
\label{lemma_alg_prop_K}
For all iterations $k \in \mathcal K \setminus \{0\} \subset \N$, the following properties hold:
\begin{enumerate}[(a)]
    \item $\Vert r(x^k) \Vert \leq \eta \overline r_{k-1}$, \label{eq_lemma_alg_prop_K_dec}
    \item $\overline r_{k} = \Vert r(x^k) \Vert$, \label{eq_lemma_alg_prop_K_eq}
    \item iteration $k-1$ was successful or highly successful, \label{eq_lemma_alg_prop_K_succ}
    \item $\nu_k \leq \overline\nu $, \label{eq_lemma_alg_prop_K_max}
    \item $\mu_k \leq \overline\nu \Vert r(x^k) \Vert^\delta$. \label{eq_lemma_alg_prop_K_mu}
\end{enumerate}
\end{lemma}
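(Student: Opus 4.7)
The plan is to unwind the definition of $\mathcal K$ and cascade the five statements, each feeding into the next, mostly through direct appeals to the algorithm's update rules and the already-established Lemma~\ref{lemma_r_dec}.

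First I would handle (a) and (b), which are almost tautological. By the definition of $\mathcal K \setminus \{0\}$, the if-condition on line~\ref{algo_line_K} was satisfied at iteration $k-1$, which is exactly the inequality $\Vert r(x^k) \Vert \leq \eta \overline r_{k-1}$ claimed in (a). The subsequent command on line~\ref{algo_line_K_command} then sets $\overline r_k = \Vert r(x^k) \Vert$, which is (b).

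Next I would establish (c) by contradiction. Suppose $k-1$ were an unsuccessful iteration; then the algorithm sets $x^k = x^{k-1}$, so $\Vert r(x^k) \Vert = \Vert r(x^{k-1}) \Vert$. By Lemma~\ref{lemma_r_dec}\ref{eq_lemma_r_dec_2} applied at index $k-1$, this yields $\Vert r(x^k)\Vert > \eta \overline r_{k-1}$, contradicting (a). Hence $k-1$ is successful or highly successful.

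Having (c), statement (d) is immediate: in a successful or highly successful iteration the update for $\nu_{k}$ (lines~\ref{algo_line_K_commands_1} and~\ref{algo_line_K_commands_2}) explicitly takes a minimum with $\overline \nu$, so $\nu_k \leq \overline \nu$. Finally, (e) follows by combining what is already in hand: the algorithm defines $\mu_k = \nu_k \overline r_k^\delta$, and substituting (b) and (d) gives $\mu_k = \nu_k \Vert r(x^k) \Vert^\delta \leq \overline \nu \Vert r(x^k) \Vert^\delta$.

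There is no real obstacle here; the only subtle point is (c), where one must remember that ``unsuccessful'' freezes the iterate and therefore the residual, so the hypothesis that we just landed in $\mathcal K$ clashes with the lower bound $\Vert r(x^{k-1})\Vert > \eta \overline r_{k-1}$ from Lemma~\ref{lemma_r_dec}\ref{eq_lemma_r_dec_2}. Everything else is straightforward bookkeeping of the algorithm's branches.
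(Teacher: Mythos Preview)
Your proof is correct and follows essentially the same approach as the paper: (a) and (b) are read off from the definition of $\mathcal K$ and the algorithm's lines, (c) is argued by contradiction via Lemma~\ref{lemma_r_dec}\ref{eq_lemma_r_dec_2} and the fact that unsuccessful iterations freeze the iterate, and (d), (e) are then immediate from the update rules together with the already-proven parts.
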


\begin{proof}
Statements \ref{eq_lemma_alg_prop_K_dec} and \ref{eq_lemma_alg_prop_K_eq} follow directly from the if-condition in line \ref{algo_line_K} and the command in line \ref{algo_line_K_command}. If iteration $k-1$ was unsuccessful, then it would follow from Lemma \ref{lemma_r_dec}\ref{eq_lemma_r_dec_2} that $\Vert r(x^{k}) \Vert = \Vert r(x^{k-1}) \Vert > \eta \overline r_{k-1}$, a contradiction to $k \in \mathcal K$ according to \ref{eq_lemma_alg_prop_K_dec}. Hence \ref{eq_lemma_alg_prop_K_succ} holds. Assertion \ref{eq_lemma_alg_prop_K_max} then follows from \ref{eq_lemma_alg_prop_K_succ} and assertion \ref{eq_lemma_alg_prop_K_mu} follows from \ref{eq_lemma_alg_prop_K_eq} and \ref{eq_lemma_alg_prop_K_max}.
\end{proof}

\noindent 
The index set $ \mathcal K $ plays a central role in our convergence analysis.
The following result indicates why this set is so important.

\begin{lemma}
\label{lemma_equiv}
Let $\mathcal K = \{k_0,k_1,k_2,...\}$. For all $i \in \N_0$ it then holds that $\overline r_{k_{i+1}} \leq \eta \overline r_{k_i}$ and the following three statements are equivalent:
\begin{enumerate}[(i)]
\item $\mathcal K$ is an infinite set. 
\item $ \lim_{k \in \mathcal K} \Vert r(x^k) \Vert = 0$. 
\item $\liminf_{k \to \infty} \Vert r(x^k) \Vert = 0$.
\end{enumerate} 
\end{lemma}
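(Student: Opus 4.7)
The plan is to first establish the geometric decay $\overline r_{k_{i+1}} \le \eta\,\overline r_{k_i}$ and then derive the three equivalences via the cycle (i)$\Rightarrow$(ii)$\Rightarrow$(iii)$\Rightarrow$(i). The key structural observation to carry throughout is that the convention ``$k \in \mathcal K$'' refers to the \emph{if-condition in line \ref{algo_line_K} being satisfied at iteration $k-1$}, so that the update $\overline r_{k} = \|r(x^{k})\|$ in line \ref{algo_line_K_command} happens exactly at the indices $k \in \mathcal K$, and $\overline r_k = \overline r_{k-1}$ whenever $k \notin \mathcal K$.

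For the decay claim, I would fix $i \ge 0$ and note that for any index $j$ with $k_i < j < k_{i+1}$ we have $j \notin \mathcal K$ by definition of consecutive elements of $\mathcal K$. Applying the observation above iteratively along $j = k_i+1, k_i+2, \dots, k_{i+1}-1$ gives
\begin{equation*}
  \overline r_{k_i} \;=\; \overline r_{k_i+1} \;=\; \cdots \;=\; \overline r_{k_{i+1}-1}.
\end{equation*}
Since $k_{i+1} \in \mathcal K$, the if-condition at iteration $k_{i+1}-1$ is satisfied, so line \ref{algo_line_K_command} gives $\overline r_{k_{i+1}} = \|r(x^{k_{i+1}})\| \le \eta\,\overline r_{k_{i+1}-1} = \eta\,\overline r_{k_i}$.

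For (i)$\Rightarrow$(ii), if $\mathcal K$ is infinite, iterating the decay yields $\overline r_{k_i} \le \eta^i \overline r_{k_0} \to 0$. Using Lemma~\ref{lemma_alg_prop_K}\ref{eq_lemma_alg_prop_K_eq} (and the initialization for $k_0 = 0$), this is exactly $\|r(x^{k_i})\| \to 0$, i.e.\ $\lim_{k \in \mathcal K} \|r(x^k)\| = 0$. The implication (ii)$\Rightarrow$(iii) is immediate, since any subsequential limit bounds $\liminf$ from above and $\|r(\cdot)\| \ge 0$.

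For (iii)$\Rightarrow$(i), I would argue by contrapositive: assume $\mathcal K$ is finite and set $K := \max \mathcal K$. Then for every $k > K$ we have $k \notin \mathcal K$, hence $\overline r_k = \overline r_{k-1}$, so the sequence stabilizes at the value $\overline r_K$ from some point on. Combining this with Lemma~\ref{lemma_r_dec}\ref{eq_lemma_r_dec_2} gives $\|r(x^k)\| > \eta\,\overline r_k = \eta\,\overline r_K$ for all $k \ge K$, and the strict form of Lemma~\ref{lemma_r_dec}\ref{eq_lemma_r_dec_2} forces $\overline r_K > 0$, so $\liminf_{k \to \infty}\|r(x^k)\| \ge \eta\,\overline r_K > 0$, contradicting (iii). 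The main bookkeeping obstacle is keeping the off-by-one between membership in $\mathcal K$ and the iteration index of the if-condition straight; once that is pinned down, both the constancy of $\overline r_k$ on gaps of $\mathcal K$ and the strict positivity of $\overline r_K$ in the final step fall out cleanly.
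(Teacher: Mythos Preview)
Your proof is correct and follows essentially the same route as the paper's: both establish the geometric decay via the update rule at indices in $\mathcal K$ (you by explicitly tracking constancy of $\overline r$ on gaps, the paper by invoking Lemma~\ref{lemma_alg_prop_K}\ref{eq_lemma_alg_prop_K_dec},\ref{eq_lemma_alg_prop_K_eq} together with monotonicity from Lemma~\ref{lemma_r_dec}\ref{eq_lemma_r_dec_1}), and both close the equivalence by the same contrapositive using Lemma~\ref{lemma_r_dec}\ref{eq_lemma_r_dec_2}. The only cosmetic difference is that for (ii)$\Rightarrow$(iii) the paper routes through Lemma~\ref{lemma_r_dec}\ref{eq_lemma_r_dec_3}, whereas you observe directly that a subsequential limit of zero forces the $\liminf$ to vanish; your argument is slightly more direct and you are also more explicit about why $\overline r_K > 0$ in the final step.
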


\begin{proof}
Consider $i \in \N$ and $k_{i} \in \mathcal K$. From Lemma \ref{lemma_alg_prop_K}\ref{eq_lemma_alg_prop_K_dec}, \ref{lemma_alg_prop_K}\ref{eq_lemma_alg_prop_K_eq} and Lemma \ref{lemma_r_dec}\ref{eq_lemma_r_dec_1} it then follows immediately that $\overline r_{k_{i}} \leq \eta \overline r_{k_{i}-1} \leq \eta \overline r_{k_{i-1}}$. If $\mathcal K$ is an infinite set and using Lemma~\ref{lemma_r_dec}\ref{eq_lemma_r_dec_1}, this directly implies $\lim_{k \in \mathcal K} \overline r_k = \lim_{k \in \mathcal K} \Vert r(x^k) \Vert = 0$. From Lemma \ref{lemma_r_dec}\ref{eq_lemma_r_dec_3} it follows that $\liminf_{k \to \infty} \Vert r(x^k) \Vert = 0$. Suppose now that $\mathcal K$ is not an infinite set. Denote the last iteration in $\mathcal K$ by $\bar k$. Then it holds that $\overline r_k = \overline r_{\bar k}$ for all $k \geq \bar k$. It follows from Lemma \ref{lemma_r_dec}\ref{eq_lemma_r_dec_2} that $\Vert r(x^k) \Vert > \eta \overline r_k = \eta \overline r_{\bar k}$ for all $k \geq \bar k$. Hence, $\liminf_{k \to \infty} \Vert r(x^k) \Vert > 0$.
\end{proof}

\section{Global Convergence Results}\label{Sec:GlobalConvergence}

This section presents global convergence results which are in the 
same spirit as those known for trust-region-type methods.

The first result states that the inexactness criterion \eqref{eq_inexactness} is feasible, which implies that Algorithm~\ref{algo_proxRegNewton_cvx}
is well-defined.

\begin{lemma}
\label{lemma_inexactness}
For every $k \in \N_0$ such that $x^k$ is not a stationary point of \eqref{problem}, the inexactness criterion \eqref{eq_inexactness} is satisfied for any $x \in \dom \varphi$ sufficiently close to the exact solution $\overline x^k$ of \eqref{reg_proximal_newton_cvx}.
\end{lemma}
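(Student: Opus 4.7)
The plan is to establish the two parts of \eqref{eq_inexactness} separately, both by a continuity/perturbation argument around the exact subproblem minimizer $\overline x^k$. Before doing so, I would record the key observation that $\overline x^k \neq x^k$: indeed, $R_k(x^k) = x^k - \prox_\varphi(x^k - \nabla f(x^k)) = r(x^k)$, while $R_k(\overline x^k)=0$ by the optimality characterization for \eqref{reg_proximal_newton_cvx}. Thus if $\overline x^k = x^k$ held, then $r(x^k)=0$, contradicting the assumption that $x^k$ is not a stationary point. In particular, $\|\overline x^k - x^k\| > 0$ and $\|r(x^k)\| > 0$.

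For the first inequality in \eqref{eq_inexactness}, note that the mapping $x \mapsto R_k(x)$ is continuous on $\R^n$ (it is affine composed with the nonexpansive, hence continuous, operator $\prox_\varphi$). Since $R_k(\overline x^k)=0$ and the right-hand side $\theta\min\{\Vert r(x^k)\Vert,\Vert r(x^k)\Vert^{1+\tau}\}$ is a strictly positive constant (independent of $x$), continuity gives a neighborhood of $\overline x^k$ on which $\Vert R_k(x)\Vert$ stays below this constant.

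For the second inequality, I would exploit that $\hat q_k$ is strongly convex with modulus at least $\mu_k$, since $G_k = H_k + \mu_k I$ with $H_k \succeq 0$. Applying the strong-convexity inequality at the minimizer $\overline x^k$ (where $0 \in \partial \hat q_k(\overline x^k)$) and evaluating at $x^k$ yields
\begin{equation*}
F(x^k) = \hat q_k(x^k) \geq \hat q_k(\overline x^k) + \frac{\mu_k}{2}\Vert \overline x^k - x^k\Vert^2.
\end{equation*}
Because $\overline x^k \neq x^k$ and $\alpha \in (0,1)$, this strict gap rewrites as
\begin{equation*}
F(x^k) - \hat q_k(\overline x^k) > \frac{\alpha\mu_k}{2}\Vert \overline x^k - x^k\Vert^2.
\end{equation*}
Both sides depend continuously on $x$ when $\overline x^k$ is replaced by $x \in \dom\varphi$: the left-hand side via continuity of $\hat q_k$ on $\dom\varphi$ (here Assumption~\ref{ass_phi} is essential, as $\varphi$ is continuous on its domain, and the smooth quadratic part is obviously continuous), and the right-hand side by continuity of the squared norm. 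Consequently, the strict inequality extends to a neighborhood of $\overline x^k$ within $\dom\varphi$.

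The proof concludes by intersecting the two neighborhoods obtained above, inside $\dom\varphi$. I do not expect any genuine obstacle here; the only subtlety is the argument that $\overline x^k \neq x^k$, which is what turns the strong-convexity bound (with factor $\mu_k/2$) into a strictly stronger one than the required bound (with factor $\alpha\mu_k/2$) and thereby leaves slack for the perturbation.
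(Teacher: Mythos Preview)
Your proof is correct and follows essentially the same approach as the paper: a continuity argument for the first criterion, and for the second the key inequality $F(x^k)-\hat q_k(\overline x^k)\geq \tfrac{\mu_k}{2}\|\overline x^k-x^k\|^2$ followed by continuity. The only difference is cosmetic: the paper derives this inequality via an external result (\cite[Proposition~2.4]{lee_yuekai_saunders_2014}) giving $\nabla f(x^k)^\top(\overline x^k-x^k)+\varphi(\overline x^k)-\varphi(x^k)\leq -(\overline x^k-x^k)^\top G_k(\overline x^k-x^k)$, whereas you obtain it directly from strong convexity of $\hat q_k$ at its minimizer, and you make explicit the point $\overline x^k\neq x^k$ that the paper uses tacitly when passing from $\tfrac{\mu_k}{2}$ to $\tfrac{\alpha\mu_k}{2}$.
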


\begin{proof}
Recall that there are two criteria in \eqref{eq_inexactness}. We show
that both of them hold for all $ x $ sufficiently close to the global 
minimum of the underlying subproblem. Hence, consider a fixed iteration
index $k \in \N_0$ and assume that $x^k$ is not already a stationary point
of the given composite optimization problem \eqref{problem}. Since $\Vert R_k(\overline x^k) \Vert = 0$, it follows from the continuity of $R_k$ relative to $\dom \varphi$ that
\[
\Vert R_k(x) \Vert \leq \theta \min\left\{ \Vert r(x^k) \Vert, \Vert r(x^k) \Vert^{1+\tau} \right\}
\]
holds for $x \in \dom \varphi$ sufficiently close to $\overline x^k$,
showing that the first test in \eqref{eq_inexactness} holds for these
$ x $. Furthermore, from \cite[Proposition 2.4]{lee_yuekai_saunders_2014}, it follows that the exact solution $\overline x^k$ of subproblem \eqref{reg_proximal_newton_cvx} satisfies 
\begin{equation}
\label{eq_lemma_pred_ineq_1}
\nabla f(x^k)^\top  (\overline x^k-x^k)+\varphi(\overline x^k)-\varphi(x^k) \leq -(\overline x^k-x^k)^\top  G_k (\overline x^k-x^k).
\end{equation}
Therefore we obtain 
\begin{align}
\label{eq_lemma_pred_1}
\begin{split}
F(x^k)-\hat q_k(\overline x^k) &= \varphi(x^k)-\nabla f(x^k)^\top (\overline x^k-x^k)-\frac{1}{2}(\overline x^k-x^k)^\top G_k(\overline x^k-x^k)-\varphi(\overline x^k) \\
&= -\left(\nabla f(x^k)^\top  (\overline x^k-x^k)+\varphi(\overline x^k) - \varphi(x^k)\right)-\frac{1}{2}(\overline x^k-x^k)^\top G_k(\overline x^k-x^k) \\
&\geq \frac{1}{2} (\overline x^k - x^k)^\top G_k (\overline x^k-x^k) \geq \frac{\mu_k}{2}\Vert \overline x^k - x^k \Vert^2
> \frac{\alpha\mu_k}{2}\Vert \overline x^k - x^k \Vert^2,
\end{split}
\end{align}
where the first inequality follows from \eqref{eq_lemma_pred_ineq_1} and the second from the positive semidefiniteness of $H_k$. From the continuity of $F(x^k)-\hat q_k(\cdot)-\frac{\alpha\mu_k}{2}\Vert \cdot - x^k \Vert^2$ relative to $\dom \varphi$, it follows that
\[
F(x^k)-\hat q_k(x) > \frac{\alpha\mu_k}{2}\Vert x-x^k \Vert^2
\]
holds for all $x \in \dom \varphi$ sufficiently close to $\overline x^k$.

\end{proof}

\noindent 
The next result provides a lower and upper bound of the residual
$ r(x^k) $ in terms of the vector $ d^k $.

\begin{lemma}
\label{lemma_rk_dk}
For all $k \in \N_0$, it holds that
\[
\frac{\mu_k}{(1+\Vert G_k \Vert)(1+\theta)}\Vert d^k \Vert \leq \Vert r(x^k) \Vert \leq \frac{1+\Vert G_k \Vert}{1-\theta}\Vert d^k \Vert.
\]
\end{lemma}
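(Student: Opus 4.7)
The plan is to convert both residuals into subdifferential inclusions, apply monotonicity of $\partial\varphi$ (which is convex by Assumption~\ref{ass_phi}), and then exploit the lower bound $G_k \succeq \mu_k I$ together with the inexactness criterion \eqref{eq_inexactness}.

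First I would use the characterization \eqref{eq_prox_subdiff} to rewrite the two proximal evaluations: since $\prox_\varphi(x^k - \nabla f(x^k)) = x^k - r(x^k)$ and $\prox_\varphi(\hat x^k - \nabla f(x^k) - G_k d^k) = \hat x^k - R_k(\hat x^k)$, they translate to the inclusions
\begin{align*}
r(x^k) - \nabla f(x^k) &\in \partial\varphi\bigl(x^k - r(x^k)\bigr), \\
R_k(\hat x^k) - \nabla f(x^k) - G_k d^k &\in \partial\varphi\bigl(\hat x^k - R_k(\hat x^k)\bigr).
\end{align*}
Setting $w := r(x^k) - R_k(\hat x^k)$ and subtracting, the $\nabla f(x^k)$ term cancels. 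Monotonicity of $\partial\varphi$ then yields
\[
0 \leq \bigl\langle w + G_k d^k,\; -(w + d^k)\bigr\rangle,
\]
which rearranges to $\Vert w\Vert^2 + \langle d^k, G_k d^k\rangle \leq -\langle w,(I+G_k)d^k\rangle \leq (1+\Vert G_k\Vert)\,\Vert w\Vert\,\Vert d^k\Vert$. Using $\langle d^k, G_k d^k\rangle \geq \mu_k\Vert d^k\Vert^2$ (which follows from $H_k \succeq 0$), this produces the central two-sided estimate
\[
\Vert w\Vert^2 + \mu_k \Vert d^k\Vert^2 \;\leq\; (1+\Vert G_k\Vert)\,\Vert w\Vert\,\Vert d^k\Vert.
\]

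From this inequality, dropping $\mu_k\Vert d^k\Vert^2$ and dividing by $\Vert w\Vert$ gives $\Vert w\Vert \leq (1+\Vert G_k\Vert)\Vert d^k\Vert$, while dropping $\Vert w\Vert^2$ and dividing by $\Vert d^k\Vert$ gives $\mu_k\Vert d^k\Vert \leq (1+\Vert G_k\Vert)\Vert w\Vert$. Next I would invoke the inexactness criterion \eqref{eq_inexactness}, which yields $\Vert R_k(\hat x^k)\Vert \leq \theta\Vert r(x^k)\Vert$; combined with the triangle inequality this gives the sandwich $(1-\theta)\Vert r(x^k)\Vert \leq \Vert w\Vert \leq (1+\theta)\Vert r(x^k)\Vert$. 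Chaining the upper bound $\Vert w\Vert \leq (1+\Vert G_k\Vert)\Vert d^k\Vert$ with $(1-\theta)\Vert r(x^k)\Vert \leq \Vert w\Vert$ produces the asserted upper bound on $\Vert r(x^k)\Vert$; chaining $\mu_k\Vert d^k\Vert \leq (1+\Vert G_k\Vert)\Vert w\Vert$ with $\Vert w\Vert \leq (1+\theta)\Vert r(x^k)\Vert$ produces the asserted lower bound. The degenerate case $\Vert d^k\Vert = 0$ is handled via Remark~\ref{Rem:Positive}, which forces $r(x^k) = 0$ and makes both estimates trivial, so the divisions above cause no issue.

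I expect the main obstacle to be keeping the signs straight when extracting the key inequality from monotonicity — in particular, recognising that the cross term organises itself as $-\langle w,(I+G_k)d^k\rangle$ rather than something worse, so that the factor $(1+\Vert G_k\Vert)$ appears cleanly rather than $(2+\Vert G_k\Vert)$ that a naive triangle-inequality/nonexpansiveness argument would give. After this, everything reduces to elementary manipulations with the inexactness bound.
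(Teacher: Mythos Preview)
Your proof is correct and follows essentially the same route as the paper: both convert the two proximal residuals into subdifferential inclusions, apply monotonicity of $\partial\varphi$ to obtain the key inequality relating $w = r(x^k) - R_k(\hat x^k)$ and $d^k$, and then combine $G_k \succeq \mu_k I$ with the inexactness bound $\Vert R_k(\hat x^k)\Vert \leq \theta\Vert r(x^k)\Vert$. The only cosmetic difference is that you extract a single two-sided estimate $\Vert w\Vert^2 + \mu_k\Vert d^k\Vert^2 \leq (1+\Vert G_k\Vert)\Vert w\Vert\Vert d^k\Vert$ and drop terms, whereas the paper reorders the monotonicity inequality twice; the handling of the degenerate case $d^k = 0$ via Remark~\ref{Rem:Positive} also matches the paper's treatment.
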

\begin{proof}
By $r(x^k)=x^k-\prox_\varphi(x^k-\nabla f(x^k))$, we get from \eqref{eq_prox_subdiff} that $r(x^k) \in \nabla f(x^k) + \partial \varphi \left(x^k-r(x^k)\right)$.
In the same way, $R_k(\hat x^k) \in \nabla f(x^k) + G_kd^k+\partial \varphi(\hat x^k-R_k(\hat x^k))$ follows from the definition of the proximal operator. The monotonicity of the subgradient mapping $\partial\varphi$ ensures that
\begin{align}
\label{eq_lemma_rk_dk_1}
\left\langle R_k(\hat x^k)-r(x^k) - G_kd^k, \, d^k+r(x^k) - R_k(\hat x^k) \right\rangle \geq 0.
\end{align}
Simply reordering the left-hand side yields
\begin{align*}
0 \leq -\Vert r(x^k) \Vert^2 - \Vert R_k(\hat x^k) \Vert^2 + 2\langle R_k(\hat x^k),r(x^k) \rangle - (d^k)^\top G_k d^k + \langle R_k(\hat x^k)-r(x^k), d^k+G_kd^k \rangle .
\end{align*}
This implies
\begin{align*}
\Vert r(x^k) - R_k(\hat x^k) \Vert^2 &\leq \Vert r(x^k) \Vert^2-2\left\langle R_k(\hat x^k),r(x^k) \right\rangle + \Vert R_k(\hat x^k) \Vert^2 + (d^k)^\top G_kd^k \\
&\leq \left\langle R_k(\hat x^k)-r(x^k), \, d^k+G_kd^k \right\rangle \\
&\leq \Vert r(x^k) - R_k(\hat x^k) \Vert \cdot (1+\Vert G_k \Vert)\Vert d^k \Vert.
\end{align*}
Together with the inexactness criterion $ \Vert R_k(\hat x^k) \Vert \leq \theta \Vert r(x^k) \Vert$ and the Cauchy-Schwarz inequality, this results in 
\[
\Vert r(x^k) \Vert \leq \Vert r(x^k) - R_k(\hat x^k) \Vert + \Vert R_k(\hat x^k) \Vert \leq (1+\Vert G_k \Vert) \Vert d^k \Vert + \theta \Vert r(x^k) \Vert.
\]
Remembering $\theta \in (0,1)$, we get the upper estimate
\[
\Vert r(x^k) \Vert \leq \frac{1+\Vert G_k \Vert}{1-\theta}\Vert d^k \Vert.
\]
Reordering \eqref{eq_lemma_rk_dk_1} in a different way yields
\[
\langle d^k, G_k d^k \rangle \leq \langle R_k(\hat x^k) - r(x^k), d^k-R_k(\hat x^k) + r(x^k) + G_k d^k \rangle \leq \langle (I+G_k) d^k, R_k(\hat x^k) - r(x^k) \rangle .
\]
Using $G_k \succeq \mu_k I$ and the Cauchy-Schwarz inequality, we therefore get
\[
\mu_k \Vert d^k \Vert^2 \leq \langle d^k, G_k d^k \rangle \leq (1+ \Vert G_k \Vert) \Vert d^k \Vert \Vert R_k(\hat x^k) - r(x^k) \Vert \leq (1+\Vert G_k \Vert) \Vert d^k \Vert (1+\theta) \Vert r(x^k) \Vert,
\]
where the last inequality follows from \eqref{eq_inexactness}. Hence, dividing by $\mu_k \Vert d^k \Vert$ (in the case of $\Vert d^k \Vert = 0$, the resulting inequality holds trivially) yields
\[
\Vert d^k \Vert \leq \frac{(1+\theta)(1+\Vert G_k \Vert)}{\mu_k} \Vert r(x^k) \Vert.
\]
This completes the proof.
\end{proof}

\noindent
The following result provides (implicitly) a condition under which the quotient
between the actual and the predicted reduction is greater than a suitable
constant (note that, in the following, we often exploit the observation
from Remark~\ref{Rem:Positive}
that the predicted reduction is a positive number, without explicitly mentioning this fact).

\begin{lemma}
Let $c \leq 1$. For every $k \geq 0$, there exists $\xi^k$ on the line segment between $x^k$ and $\hat x^k$ such that
\begin{equation}
\text{ared}_k-c\,\text{pred}_k \geq \frac{1}{2}\left((1-c)\mu_k - \Vert \nabla^2f(\xi^k)-\nabla^2 f(x^k) \Vert\right)\Vert d^k \Vert^2.
\end{equation}
\label{lemma_ared_pred_ineq_cvx}
\end{lemma}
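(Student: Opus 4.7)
The plan is to rewrite $\text{ared}_k - c\,\text{pred}_k$ as $(\text{ared}_k - \text{pred}_k) + (1-c)\text{pred}_k$ and bound each piece separately. The first difference will be controlled by the discrepancy of Hessians along the segment (via Taylor's theorem), while the second will be bounded below by $\tfrac{\mu_k}{2}\Vert d^k\Vert^2$ using the already-established estimate \eqref{eq_pred}. Since $c\leq 1$, the factor $(1-c)$ is nonnegative, so this decomposition preserves inequality direction.

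The first step is to apply Taylor's theorem with mean-value remainder to the $C^2$ function $f$. Because $\varphi$ is convex, $\dom\varphi$ is convex, and the segment $[x^k,\hat x^k]\subseteq\dom\varphi\subseteq\Omega$ lies inside the open set on which $f$ is twice continuously differentiable (by Assumption~\ref{ass_psi}). Hence there exists some $\xi^k$ on this segment with
\begin{equation*}
  f(\hat x^k) = f(x^k) + \nabla f(x^k)^\top d^k + \tfrac{1}{2}(d^k)^\top \nabla^2 f(\xi^k)\, d^k.
\end{equation*}
Subtracting this identity from the definition of $q_k(\hat x^k)$ cancels the gradient term and the $\varphi(\hat x^k)$ term, yielding
\begin{equation*}
  \text{ared}_k - \text{pred}_k = q_k(\hat x^k) - F(\hat x^k) = \tfrac{1}{2}(d^k)^\top \bigl[\nabla^2 f(x^k) - \nabla^2 f(\xi^k)\bigr] d^k.
\end{equation*}
A single application of the Cauchy--Schwarz (operator-norm) inequality then gives
\begin{equation*}
  \text{ared}_k - \text{pred}_k \geq -\tfrac{1}{2}\,\Vert \nabla^2 f(\xi^k) - \nabla^2 f(x^k)\Vert\,\Vert d^k\Vert^2.
\end{equation*}

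The second step is to recall \eqref{eq_pred}, which gives $\text{pred}_k \geq \tfrac{\mu_k}{2}\Vert d^k\Vert^2$. Since $1-c \geq 0$, multiplying by $(1-c)$ yields $(1-c)\text{pred}_k \geq \tfrac{(1-c)\mu_k}{2}\Vert d^k\Vert^2$. Adding this to the inequality from the first step produces
\begin{equation*}
  \text{ared}_k - c\,\text{pred}_k \geq \tfrac{1}{2}\bigl((1-c)\mu_k - \Vert\nabla^2 f(\xi^k)-\nabla^2 f(x^k)\Vert\bigr)\Vert d^k\Vert^2,
\end{equation*}
which is the claim.

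There is no real obstacle here; the only subtlety is the justification of Taylor's theorem (i.e., that the segment lies in the domain of $C^2$-smoothness of $f$), which is handled by the convexity of $\dom\varphi$ together with Assumption~\ref{ass_psi}. Note also that the inexactness criterion \eqref{eq_inexactness} is not needed in this lemma, as everything follows from the positive semidefiniteness of $\Lambda_k A^\top A$ (used implicitly in \eqref{eq_pred}) and the $C^2$-smoothness of $f$.
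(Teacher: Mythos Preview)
Your proof is correct and follows essentially the same approach as the paper: both decompose $\text{ared}_k - c\,\text{pred}_k$ into $(1-c)\text{pred}_k$ plus $\text{ared}_k - \text{pred}_k = q_k(\hat x^k) - F(\hat x^k)$, handle the latter via the second-order Taylor expansion of $f$ along the segment $[x^k,\hat x^k]$, and bound the former using \eqref{eq_pred} together with $1-c\geq 0$. Your remark on why the segment lies in the region of $C^2$-smoothness mirrors the paper's invocation of the convexity of $\dom\varphi$.
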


\begin{proof}
It follows from Taylor's formula and the convexity of $\dom\varphi$ that, for every $k\geq 0$, there exists $\xi^k \in \dom\varphi$ on the line segment between $x^k$ and $\hat x^k$ such that 
\[
f(\hat x^k)-f(x^k)-\nabla f(x^k)^\top  d^k=\frac{1}{2}(d^k)^\top \nabla^2f(\xi^k)d^k.
\] 
This yields
\begin{align*}
F(\hat x^k)-q_k(\hat x^k) &= f(\hat x^k)-f(x^k)-\nabla f(x^k)^\top  d^k-\frac{1}{2}(d^k)^\top  \nabla^2f(x^k)d^k \\
&= \frac{1}{2}(d^k)^\top (\nabla^2f(\xi^k)-\nabla^2 f(x^k))d^k \\
&\leq \frac{1}{2}\Vert \nabla^2f(\xi^k)-\nabla^2 f(x^k) \Vert\Vert d^k \Vert^2.
\end{align*}
Using this inequality together with \eqref{eq_pred}, we get
\begin{align*}
\text{ared}_k-c\,\text{pred}_k&= (1-c)\text{pred}_k-\text{pred}_k+\text{ared}_k = (1-c)\text{pred}_k-\left(F(\hat x^k)-q_k(\hat x^k)\right) \\
&\geq \frac{1-c}{2}\mu_k\Vert d^k \Vert^2-\frac{1}{2}\Vert \nabla^2f(\xi^k)-\nabla^2f(x^k) \Vert \Vert d^k \Vert^2 \\
&= \frac{1}{2}\left((1-c)\mu_k-\Vert \nabla^2f(\xi^k)-\nabla^2f(x^k) \Vert\right)\Vert d^k \Vert^2.
\end{align*}
This completes the proof.
\end{proof}

\noindent 
We next show that Algorithm~\ref{algo_proxRegNewton_cvx} generates
infinitely many successful or highly successful iterates.

\begin{theorem}
Suppose $\Vert r(x^k) \Vert \neq 0$ for all $k \geq 0$. Then Algorithm \ref{algo_proxRegNewton_cvx} performs infinitely many successful or highly successful iterations.
\label{theorem_well_defined_cvx}
\end{theorem}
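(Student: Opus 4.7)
The plan is to argue by contradiction. Suppose only finitely many iterations are successful or highly successful, so there is an index $\bar k$ such that every iteration $k \geq \bar k$ is unsuccessful. By Lemma~\ref{lemma_alg_prop_mu}\ref{eq_lemma_alg_prop_mu_unsucc} this means $x^k = x^{\bar k}$ and $\nu_k = \sigma_2^{k-\bar k}\nu_{\bar k}$ for all $k \geq \bar k$, so $\nu_k \to \infty$. Moreover, since $x^{k+1}=x^k$ along this tail, Lemma~\ref{lemma_r_dec}\ref{eq_lemma_r_dec_2} forces $\|r(x^{k+1})\| = \|r(x^k)\| > \eta\overline r_k$, so the test in line~\ref{algo_line_K} fails and $\overline r_k$ remains frozen at $\overline r_{\bar k}$. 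Consequently $\mu_k = \nu_k \overline r_{\bar k}^{\,\delta} \to \infty$, while $r(x^{\bar k})$, $\nabla^2 f(x^{\bar k})$, and $H_{\bar k}$ are all fixed, with $\|r(x^{\bar k})\| > 0$ by hypothesis.

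Next, I would show $\{\|d^k\|\}_{k \geq \bar k}$ is bounded. Using $\|G_k\| \leq \|H_{\bar k}\| + \mu_k$ together with the upper estimate in Lemma~\ref{lemma_rk_dk}, one obtains
\[
\|d^k\| \leq \frac{(1+\theta)(1+\|G_k\|)}{\mu_k}\|r(x^{\bar k})\| \leq (1+\theta)\Bigl(1 + \tfrac{1+\|H_{\bar k}\|}{\mu_k}\Bigr)\|r(x^{\bar k})\|,
\]
which is uniformly bounded in $k$. Since $\xi^k$ in Lemma~\ref{lemma_ared_pred_ineq_cvx} lies on the segment from $x^{\bar k}$ to $\hat x^k \in \dom\varphi$, the points $\xi^k$ stay in a bounded subset of $\dom\varphi \subseteq \Omega$. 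The continuity of $\nabla^2 f$ on $\Omega$ then provides a constant $M > 0$ with $\|\nabla^2 f(\xi^k) - \nabla^2 f(x^{\bar k})\| \leq M$ for all $k \geq \bar k$.

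The crux is to show that both tests in line~6 of the algorithm are eventually violated, forcing the iteration to be classified as (highly) successful and contradicting the choice of $\bar k$. For the ratio test $\rho_k \leq c_1$, Lemma~\ref{lemma_ared_pred_ineq_cvx} with $c=c_1 < 1$ gives
\[
\text{ared}_k - c_1\,\text{pred}_k \geq \tfrac{1}{2}\bigl((1-c_1)\mu_k - M\bigr)\|d^k\|^2 > 0
\]
as soon as $\mu_k > M/(1-c_1)$; since $\text{pred}_k > 0$ by Remark~\ref{Rem:Positive}, this yields $\rho_k > c_1$. For the first test, combining \eqref{eq_pred} with the lower estimate from Lemma~\ref{lemma_rk_dk} produces
\[
\text{pred}_k \geq \frac{\mu_k}{2}\|d^k\|^2 \geq \frac{\mu_k(1-\theta)\|r(x^{\bar k})\|}{2(1+\|G_k\|)}\|d^k\|,
\]
so that dividing by $p_{\min}(1-\theta)\|d^k\|\min\{\|r(x^{\bar k})\|,\|r(x^{\bar k})\|^{\kappa}\}$ and using $\mu_k/(1+\|G_k\|) \to 1$, $p_{\min} < 1/2$, and $\min\{\|r(x^{\bar k})\|,\|r(x^{\bar k})\|^{\kappa}\} \leq \|r(x^{\bar k})\|$ shows the ratio eventually strictly exceeds $1$.

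The main obstacle lies in the second paragraph: because $\|G_k\|$ grows with $\mu_k$, a naive bound would only control $\|d^k\|/\mu_k$ rather than $\|d^k\|$ itself, so one must exploit the cancellation $\|G_k\|/\mu_k \to 1$ to keep the sequence $\{\xi^k\}$ in a compact subset of $\Omega$ on which $\nabla^2 f$ is uniformly bounded; once this is established, the two failure arguments in line~6 follow by letting $\mu_k \to \infty$.
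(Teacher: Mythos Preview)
Your proposal is correct and follows essentially the same route as the paper's proof: assume a tail of unsuccessful iterations, freeze $x^k$, send $\mu_k\to\infty$, bound $\|d^k\|$ via Lemma~\ref{lemma_rk_dk} so that the $\xi^k$ lie in a compact subset of $\Omega$, then use Lemma~\ref{lemma_ared_pred_ineq_cvx} to force $\rho_k>c_1$ and the estimate $\mu_k/(1+\|G_k\|)\to 1$ together with \eqref{eq_pred} and $p_{\min}<1/2$ to violate the $\text{pred}_k$ test. The only cosmetic difference is that the paper explicitly invokes the \emph{compact} set $B_{\overline d}(x^{k_0})\cap\Omega$ (using that $\Omega$ is closed) rather than merely a bounded subset of $\dom\varphi$, which is what is needed to extract the uniform bound $M$ on $\|\nabla^2 f(\xi^k)-\nabla^2 f(x^{\bar k})\|$.
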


\begin{proof}
Suppose there exists $k_0\geq 0$ such that all iterations $k \geq k_0$ are unsuccessful. Then at least one of the inequalities
\begin{equation}\label{Eq:In-violated}
\rho_k \leq c_1, \quad \text{pred}_k \leq p_{min}
(1 - \theta)\Vert d^k \Vert \min\{\Vert r(x^k) \Vert, \Vert r(x^k) \Vert^\kappa\}
\end{equation}
has to hold for all $k \geq k_0$. We will derive a contradiction and
show that both inequalities are eventually violated.

First note that Lemma \ref{lemma_alg_prop_mu}\ref{eq_lemma_alg_prop_mu_unsucc} implies $x^k=x^{k_0}$ for all $k \geq k_0$ and $\{\mu_k\} \to \infty$
whereas both $\{ \Vert r(x^k) \Vert \}$ and $\{\Vert H_k \Vert \}$ are bounded. Thus, remembering $\Vert G_k \Vert = \Vert H_k \Vert + \mu_k$, it follows from the first inequality in Lemma \ref{lemma_rk_dk} that $\{\Vert d^k \Vert\}$ is bounded by some $\overline d > 0$. For all $k \geq k_0$ it then holds that $\xi^k$ (from Lemma \ref{lemma_ared_pred_ineq_cvx}) belongs to the compact set $B_{\overline d}(x^{k_0}) \cap \Omega$ (recall that $ \Omega $ was supposed
to be a closed set). From the continuity of $\nabla ^2f(\cdot)$ on $\Omega$ it then follows that
\begin{equation}
\Vert \nabla^2f(\xi^k)-\nabla^2 f(x^k) \Vert < (1-c_1)\mu_k
\end{equation}
for sufficiently large $k\geq k_0$, which together with Lemma \ref{lemma_ared_pred_ineq_cvx} guarantees
\[
\text{ared}_k-c_1\text{pred}_k > 0,
\]
and therefore $\rho_k > c_1$, thus violating the first inequality
in \eqref{Eq:In-violated}. 

The second inequality in Lemma \ref{lemma_rk_dk} ensures that $\Vert d^k \Vert > 0$ for all $k \geq 0$. Thus, from Lemma \ref{lemma_rk_dk}, we get
\[
\frac{\Vert r(x^k) \Vert}{\Vert d^k \Vert\mu_k} \leq \frac{1+\Vert G_k \Vert}{(1-\theta)\mu_k} \leq \frac{1+\Vert H_k \Vert + \mu_k}{(1-\theta)\mu_k}
\]
for all $k \geq k_0$. Taking $k \to \infty$, it follows that the expression on the right-hand side tends to $1/(1-\theta)$. Hence, for $k \geq k_0$ sufficiently large it holds that
\[
\frac{\Vert r(x^k) \Vert}{\Vert d^k \Vert \mu_k} < \frac{1}{2p_{min}(1-\theta)}.
\]
This inequality together with \eqref{eq_pred} then yields
\begin{align}
\begin{split}
\label{eq_theorem_well_defined_cvx_1}
\text{pred}_k \geq \frac{\mu_k}{2}\Vert d^k \Vert^2 > p_{min}(1-\theta)\Vert r(x^k) \Vert \Vert d^k \Vert \geq p_{min}(1-\theta)\Vert d^k \Vert \min\{\Vert r(x^k) \Vert,\Vert r(x^k) \Vert^\kappa\}
\end{split}
\end{align}
for sufficiently large $k \geq k_0$, which contradicts the second
inequality in \eqref{Eq:In-violated}.
\end{proof}

\noindent 
We next present our first global convergence result for Algorithm~\ref{algo_proxRegNewton_cvx}.

\begin{theorem}
\label{global_convergence_cvx}
The sequence $\{x^k\}$ generated by Algorithm~\ref{algo_proxRegNewton_cvx} satisfies $\liminf_{k \to \infty} \Vert r(x^k) \Vert = 0$. 
\end{theorem}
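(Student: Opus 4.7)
The plan is to derive a contradiction from Lemma~\ref{lemma_equiv}. Suppose the conclusion fails, i.e., $\liminf_{k \to \infty} \|r(x^k)\| > 0$; then (assuming $r(x^k) \neq 0$ for every $k$, as otherwise the claim is trivial) the set $\mathcal K$ must be finite. Put $\bar k := \max \mathcal K$. By Lemma~\ref{lemma_r_dec}(a),(b), $\overline r_k = \overline r_{\bar k}$ and $\|r(x^k)\| > \eta\,\overline r_{\bar k} =: \varepsilon > 0$ for every $k \geq \bar k$.

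I would next extract a sufficient-decrease-style summability from the successful iterations. Theorem~\ref{theorem_well_defined_cvx} supplies an infinite set $\mathcal S$ of successful or highly successful iterations. For $k \in \mathcal S$ with $k \geq \bar k$, combining $\rho_k > c_1$ with the negation of the pred-test in line~6 and the bound $\|r(x^k)\| > \varepsilon$ yields
$F(x^k) - F(x^{k+1}) \geq c_1\,\text{pred}_k > c_1\,p_{min}(1-\theta)\min\{\varepsilon,\varepsilon^\kappa\}\,\|d^k\|,$
while $F(x^{k+1}) = F(x^k)$ at unsuccessful iterations. Telescoping and Assumption~\ref{ass_inf_F} give $\sum_{k \in \mathcal S} \|d^k\| < \infty$, so $\|d^k\| \to 0$ along $\mathcal S$. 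Since $\|x^{k+1} - x^k\| = \|d^k\|$ at successful iterations and zero otherwise, the full sequence $\{x^k\}$ is Cauchy, hence it converges to some $x^* \in \Omega$ and is in particular bounded; continuity of the Hessian data on $\Omega$ then provides a uniform bound $\|H_k\| \leq B$.

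To close the argument I also need $\{\mu_k\}_{k \in \mathcal S}$ bounded. Between consecutive elements $k_{j-1} < k_j$ of $\mathcal S$ the iterate is frozen at $y_j := \hat x^{k_{j-1}}$, and since $\nu$ is capped at $\overline\nu$ after any successful step, $\nu_{k_j} \leq \sigma_2^{k_j - k_{j-1} - 1}\,\overline\nu$. Re-running the argument of Theorem~\ref{theorem_well_defined_cvx} at the fixed iterate $y_j$ bounds the length of the unsuccessful streak in terms of the modulus of continuity of $\nabla^2 f$ near $y_j$; since $\{x^k\} \cup \{x^*\}$ is compact in $\Omega$ this bound is uniform, say by $N$, so $\mu_{k_j} \leq \sigma_2^N\,\overline\nu\,\overline r_{\bar k}^\delta =: \overline\mu$. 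The lower estimate of Lemma~\ref{lemma_rk_dk} then forces
$\|d^k\| \geq \frac{(1-\theta)\|r(x^k)\|}{1 + \|G_k\|} \geq \frac{(1-\theta)\varepsilon}{1 + B + \overline\mu} > 0$
for all $k \in \mathcal S$ with $k \geq \bar k$, contradicting $\|d^k\| \to 0$. The principal obstacle is precisely this uniform control of $\mu_k$: without the coerciveness assumption, orbit boundedness is not a priori available and must first be extracted from the $F$-decrease before a uniform version of the Theorem~\ref{theorem_well_defined_cvx} reasoning can be applied.
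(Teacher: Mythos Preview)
Your outline is correct and coincides with the paper through the first half: both set up the contradiction via Lemma~\ref{lemma_equiv}, use the successful-iteration inequality $F(x^k)-F(x^{k+1})>c_1 p_{\min}(1-\theta)\min\{\varepsilon,\varepsilon^\kappa\}\|d^k\|$ to get $\sum_{k\in\mathcal S}\|d^k\|<\infty$, and conclude that $\{x^k\}$ is Cauchy with $\|H_k\|$ uniformly bounded. The divergence is in the endgame. You argue that $\{\mu_k\}_{k\in\mathcal S}$ stays \emph{bounded}, by making the Theorem~\ref{theorem_well_defined_cvx} argument quantitative: once the orbit is trapped in a compact set, there is a uniform threshold $\mu^*$ above which both acceptance tests must pass, which caps each unsuccessful streak by some $N$ and hence $\mu_{k_j}\le\sigma_2^{N}\overline\nu\,\overline r_{\bar k}^{\delta}$; the second inequality of Lemma~\ref{lemma_rk_dk} then keeps $\|d^k\|$ bounded away from zero on $\mathcal S$, contradicting $\|d^k\|\to_{\mathcal S}0$. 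The paper takes the opposite tack: it shows $\{\mu_k\}_{k\in\mathcal S}$ is \emph{unbounded} (since otherwise Lemma~\ref{lemma_rk_dk} would force $\|r(x^k)\|\to 0$ along a subsequence), hence $\mu_k\to\infty$ on the full sequence, hence infinitely many unsuccessful iterations; a one-line Taylor estimate $|\rho_k-1|\le (2\bar L+\|\nabla^2 f(x^k)\|)/\mu_k\to 0$ then makes every step eventually successful, a contradiction. The paper's route is slightly more economical because it never needs a uniform streak bound---the passage from $\mu_k\to\infty$ to $\rho_k\to 1$ is immediate. Your route is perfectly valid and closer to classical trust-region reasoning; just note that the phrase ``modulus of continuity of $\nabla^2 f$'' is a slight misdescription---what you actually use is a uniform bound on $\|\nabla^2 f(\xi^k)-\nabla^2 f(x^k)\|$ coming from boundedness of $\nabla^2 f$ on a compact set (since $\|d^k\|$ does not tend to zero along an unsuccessful streak), together with uniform bounds on $\|H_k\|$, $\|r(x^k)\|$, and $\mu_k\ge\nu_{\min}\overline r_{\bar k}^{\delta}$.
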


\begin{proof}
Let $\mathcal S \subset \N$ be the set of successful or highly successful iterations, and recall that this set is infinite due to
Theorem~\ref{theorem_well_defined_cvx}. Assume, by contradiction, that $\liminf_{k \to \infty}\Vert r(x^k) \Vert > 0$. Then there exists $\varepsilon > 0$ such that $\min\{\Vert r(x^k) \Vert, \Vert r(x^k) \Vert^\kappa\} \geq \varepsilon$ for all $k \geq 0$. Lemma \ref{lemma_equiv} implies that the set $\mathcal K$ is finite, hence the set $\overline{\mathcal S}:=  \mathcal S \setminus \mathcal K$ is still infinite. By definition, it holds for all $k \in \overline{\mathcal S}$ that
\begin{align*}
F(x^k)-F(\hat x^k) &= \text{ared}_k > c_1 \text{pred}_k > c_1 p_{min} (1-\theta) \Vert d^k \Vert \min\{\Vert r(x^k) \Vert, \Vert r(x^k) \Vert^\kappa\} \\
&\geq c_1 p_{min} (1-\theta) \Vert d^k \Vert \varepsilon,
\end{align*}
cf. Lemma~\ref{lemma_alg_prop_mu}.
Since $F$ is bounded from below, summation yields
\[
\infty > \sum_{k=0}^{\infty} [F(x^k)-F(x^{k+1})] \geq \sum_{k \in 
\overline{\mathcal S}} [F(x^k)-F(\hat x^k)] \geq c_1p_{min}(1-\theta)\varepsilon\sum_{k \in \overline{\mathcal S}}\Vert d^k \Vert
\]
(where we used the fact that $ F(x^k)-F(x^{k+1}) \geq 0 $ for all $ k $).
Taking into account that $x^k$ is not updated in unsuccessful steps, it follows that
\begin{equation}\label{Eq:series-conv}
\infty > \sum_{k \in \overline{\mathcal S}} \Vert d^k \Vert + \sum_{k \in \mathcal K} \Vert d^k \Vert = \sum_{k \in \mathcal S} \Vert d^k \Vert = \sum_{k \in \mathcal S} \Vert x^{k+1}-x^k \Vert = \sum_{k=0}^\infty \Vert x^{k+1}-x^k \Vert,
\end{equation}
where we used the previous inequality and the finiteness of $\mathcal K$ in the first inequality. Hence, $\{x^k\}$ is a Cauchy sequence and therefore convergent to some $\overline x \in \R^n$. The mapping $x \mapsto \nabla^2f(x) + a[-\lambda_{min}(\nabla^2\psi(Ax-b))]_+A^\top A$ is continuous, i.e., the sequence $\{H_k\}$ is also convergent. Define $M := \sup\{ \Vert H_k \Vert \, \vert \, k \geq 0 \} < \infty$. Since $\Vert r(\cdot) \Vert$ is continuous, we have $\Vert r(\overline x) \Vert = \lim_{k \to \infty} \Vert r(x^k) \Vert \geq \varepsilon$ and $\overline x$ is not a stationary point of \eqref{problem}.  Using the boundedness of $\{H_k\}$ together with Lemma \ref{lemma_rk_dk} yields
\[
\Vert r(x^k) \Vert \leq \frac{1+M+\mu_k}{1-\theta}\Vert d^k \Vert.
\]
Note that \eqref{Eq:series-conv} implies
$\Vert d^k \Vert \to_{\mathcal S} 0$.
If there were a subset $\mathcal S' \subseteq \mathcal S$ such that $\{\mu_k\}_{\mathcal S'}$ is bounded, then $\{\Vert r(x^k) \Vert\}_{\mathcal S'}$ would converge to zero, a contradiction. Hence, $\{\mu_k\} \to_{\mathcal S} \infty$. Since $\mu_k$ can not decrease during unsuccessful iterations, it follows that $\{\mu_k\} \to \infty$. This implies that Algorithm \ref{algo_proxRegNewton_cvx} also performs infinitely many unsuccessful iterations. 

For every $k \geq 0$, Taylor's formula yields the existence of
a vector $\xi^k$ on the straight line between $x^k$ and $\hat x^k$ such that $f(\hat x^k)-f(x^k)=\nabla f(\xi^k)^\top d^k$. Note that, similar to the proof of Theorem \ref{theorem_well_defined_cvx}, $\{\Vert d^k \Vert\}$ is bounded. Hence, for some $\overline d > 0$ and $k$ sufficiently large, $\xi^k$ belongs to the compact set $B_{\overline d}(\overline x) \cap \Omega$. Note that $\nabla f$ is continuously differentiable and therefore also locally Lipschitz continuous, hence Lipschitz continuous on compact sets. In particular, there exists a constant $\overline L > 0$ such that
\begin{equation}
\label{eq_global_convergence_cvx_1}
\Vert \nabla f(\xi^k) - \nabla f(x^k) \Vert \leq \overline L \Vert \xi^k - x^k \Vert \leq \overline L \Vert d^k \Vert
\end{equation}
holds for $k$ sufficiently large. By using \eqref{eq_pred} in the first, Taylor's formula in the second and \eqref{eq_global_convergence_cvx_1} in the last inequality, we obtain
\begin{align*}
\vert \rho_k-1 \vert &= \left\vert\frac{\text{ared}_k}{\text{pred}_k}-1\right\vert 
=\left\vert\frac{\text{ared}_k-\text{pred}_k}{\text{pred}_k}\right\vert 
= \left\vert \frac{F(\hat x^k)-q_k(\hat x^k)}{\text{pred}_k} \right\vert \\
&\leq \frac{\vert f(\hat x^k)-f(x^k)-\nabla f(x^k)^\top d^k - \frac{1}{2}(d^k)^\top \nabla^2 f(x^k) d^k \vert}{\frac{1}{2} \mu_k \Vert d^k \Vert^2} \\
&\leq \frac{2\left\vert \nabla f(\xi^k)^\top d^k-\nabla f(x^k)^\top d^k \right\vert+\left\vert (d^k)^\top \nabla^2 f(x^k) d^k \right\vert}{\mu_k \Vert d^k \Vert^2} \\
&\leq \frac{2\Vert \nabla f(\xi^k)-\nabla f(x^k) \Vert \Vert d^k \Vert+ \Vert \nabla^2 f(x^k) \Vert \Vert d^k \Vert^2}{\mu_k \Vert d^k \Vert^2} \\
&\leq \frac{2\overline L+ \Vert \nabla^2 f(x^k) \Vert}{\mu_k} \to 0
\end{align*}
for $k \to \infty$. Hence, $\{\rho_k\} \to 1$, i.e., eventually all steps are highly successful, which yields a contradiction and therefore 
$ \label{global_conv_eq}
\liminf_{k \to \infty}\Vert r(x^k) \Vert = 0 $.
\end{proof}

\noindent
The following global convergence theorem is the same as \cite[Theorem 5.7]{lechner_2022}. Its proof is only slightly adapted to our case.

\begin{theorem}
Assume that $\nabla f$ is uniformly continuous on a set $\mathcal X$ satisfying $\{x^k\} \subset \mathcal X$. Then $\lim_{k \to \infty}\Vert r(x^k) \Vert=0$ holds. In particular, every accumulation point of $\{x^k\}$ is a stationary point of F.
\end{theorem}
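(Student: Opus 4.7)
The plan is to bootstrap the conclusion $\liminf_{k\to\infty}\Vert r(x^k)\Vert = 0$ of Theorem~\ref{global_convergence_cvx} to a full limit by a standard two--subsequence argument, with uniform continuity of $\nabla f$ providing the crucial ingredient in the final step.

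I would first argue by contradiction, assuming $\limsup_{k\to\infty}\Vert r(x^k)\Vert > 2\varepsilon$ for some $\varepsilon > 0$. Combining this with the liminf statement, I extract two index sequences $\{m_i\}$ and $\{\ell_i\}$ with $m_i < \ell_i$ such that
\[
\Vert r(x^{m_i})\Vert \geq 2\varepsilon, \quad \Vert r(x^{\ell_i})\Vert < \varepsilon, \quad \text{and} \quad \Vert r(x^k)\Vert \geq \varepsilon \text{ for all } m_i \leq k < \ell_i.
\]
On any successful iteration $k$, both alternatives in the unsuccessful test of Algorithm~\ref{algo_proxRegNewton_cvx} are violated, so, exactly as in the proof of Theorem~\ref{global_convergence_cvx},
\[
F(x^k) - F(x^{k+1}) = \rho_k\,\text{pred}_k > c_1 p_{min}(1-\theta)\Vert d^k\Vert \min\{\Vert r(x^k)\Vert, \Vert r(x^k)\Vert^\kappa\}.
\]
Restricting this to successful $k \in [m_i,\ell_i)$ (where $\min\{\Vert r(x^k)\Vert, \Vert r(x^k)\Vert^\kappa\} \geq \min\{\varepsilon,\varepsilon^\kappa\}$) and using that unsuccessful iterations contribute nothing to either $\Vert x^{k+1}-x^k\Vert$ or $F(x^k)-F(x^{k+1})$, the triangle inequality gives
\[
\Vert x^{m_i} - x^{\ell_i}\Vert \leq \sum_{k=m_i}^{\ell_i-1}\Vert x^{k+1}-x^k\Vert \leq \frac{F(x^{m_i}) - F(x^{\ell_i})}{c_1 p_{min}(1-\theta)\min\{\varepsilon,\varepsilon^\kappa\}}.
\]
Since $\{F(x^k)\}$ is monotonically nonincreasing and bounded below by $F^*$, it converges, so the numerator tends to zero and $\Vert x^{m_i} - x^{\ell_i}\Vert \to 0$ as $i\to\infty$.

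The final step upgrades this displacement estimate to a residual estimate via the uniform continuity assumption. Using nonexpansiveness of $\prox_\varphi$ in the definition of $r$, a routine calculation gives
\[
\Vert r(x) - r(y)\Vert \leq 2\Vert x-y\Vert + \Vert \nabla f(x) - \nabla f(y)\Vert \quad \text{for all } x,y \in \mathcal X,
\]
so $r$ inherits uniform continuity on $\mathcal X$ from $\nabla f$. Consequently $\Vert r(x^{m_i}) - r(x^{\ell_i})\Vert \to 0$, which contradicts the lower bound $\Vert r(x^{m_i}) - r(x^{\ell_i})\Vert \geq \Vert r(x^{m_i})\Vert - \Vert r(x^{\ell_i})\Vert \geq \varepsilon$ forced by the selection of the gaps. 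This proves $\lim_{k\to\infty}\Vert r(x^k)\Vert = 0$. The ``in particular'' part then follows immediately from continuity of $r$: if $x^{k_j} \to \overline x$, then $\Vert r(\overline x)\Vert = \lim_j \Vert r(x^{k_j})\Vert = 0$, so $\overline x \in \mathcal S^*$. The only delicate point I anticipate is arranging the index gaps cleanly in the presence of both successful and unsuccessful steps, but since unsuccessful iterations are inert for both $\{x^k\}$ and $\{F(x^k)\}$, the telescoping estimate goes through without serious effort.
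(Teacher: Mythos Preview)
Your proposal is correct and follows essentially the same approach as the paper's own proof: the same two--subsequence (gap) argument, the same sufficient--decrease estimate on successful iterations in the gap, the same telescoping bound $\Vert x^{m_i}-x^{\ell_i}\Vert \leq C^{-1}\bigl(F(x^{m_i})-F(x^{\ell_i})\bigr)$, and the same contradiction via uniform continuity of $r$. The only cosmetic differences are notation ($m_i,\ell_i$ versus the paper's $k\in\mathcal L,\ l_k$) and that you exhibit the explicit inequality $\Vert r(x)-r(y)\Vert\leq 2\Vert x-y\Vert+\Vert\nabla f(x)-\nabla f(y)\Vert$, whereas the paper simply invokes closure of uniform continuity under composition.
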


\begin{proof}
Assume, by contradiction, that there exists $\varepsilon > 0$ and $\mathcal L \subset \N$ such that $\Vert r(x^k) \Vert \geq 2\varepsilon$ for all $k \in \mathcal L$. Set $ \overline{\varepsilon} := \min \{ \varepsilon,
\varepsilon^{\kappa} \} $.
By Theorem \ref{global_convergence_cvx}, for each $k \in \mathcal L$, there is an index $l_k > k$ such that $\Vert r(x^l) \Vert \geq \varepsilon$ for all $k \leq l < l_k$ and $\Vert r(x^{l_k}) \Vert < \varepsilon$. If, for $k \in \mathcal L$, an iteration $k \leq l < l_k$ is successful or highly successful, we get
\[
F(x^l)-F(x^{l+1}) \geq c_1 \text{pred}_l > 
c_1 (1 - \theta) p_{min} \Vert d^l \Vert \Vert r(x^l) \Vert 
\geq c_1 (1 - \theta) p_{min} \overline{\varepsilon} \Vert x^{l+1}-x^l \Vert.
\]
For unsuccessful iterations $l$, this estimate holds trivially. Thus,
\begin{align*}
(1 - \theta) p_{min}c_1
\overline{\varepsilon} \Vert x^{l_k}-x^k\Vert & \leq (1 - \theta)  p_{min}c_1\varepsilon\sum_{l=k}^{l_k-1}\Vert x^{l+1}-x^l\Vert \\
& \leq \sum_{l=k}^{l_k-1} F(x^l)-F(x^{l+1})=F(x^k)-F(x^{l_k})
\end{align*}
holds for all $k \in \mathcal L$. By Assumption~\ref{ass_inf_F}, $F$ is bounded from below, and by construction, the sequence $\{F(x^k)\}$ is monotonically decreasing, hence convergent. This implies that the sequence $\{F(x^k)-F(x^{l_k})\}_{\mathcal L}$ converges to 0. Hence, we get $\{\Vert x^{l_k}-x^k \Vert\}_{\mathcal L} \to 0$. The uniform continuity of $\nabla f$ and of the proximity operator together with the fact that the composition of uniformly continuous functions is uniformly continuous, yields the uniform continuity of the residual funciton $r(\cdot)$. Thus, we get $\{\Vert r(x^{l_k})-r(x^k) \Vert \}_{\mathcal L} \to 0$. On the other hand, by the choice of $l_k$, we have
\[
\Vert r(x^k)-r(x^{l_k}) \Vert \geq \Vert r(x^k) \Vert - \Vert r(x^{l_k}) \Vert \geq 2\varepsilon-\varepsilon=\varepsilon
\]
for all $k \in \mathcal L$, which yields the desired contradiction.
\end{proof}

\section{Local Superlinear Convergence}\label{Sec:LocalConv}

The aim of this section is to prove local fast superlinear convergence
of Algorithm~\ref{algo_proxRegNewton_cvx} under the following (fairly mild)
assumptions.

\begin{assumption}
\begin{enumerate}[label=(\alph*), ref=\ref{assumption_loc_prox_cvx}(\alph*)]
\item The set $X^*$ of strongly stationary points of \eqref{problem} is nonempty and there exists an accumulation point $x^* \in X^*$ of $\{x^k\}_{\mathcal K}$. \label{ass_existence_cvx}
\item $\nabla^2\psi$ is locally Lipschitz continuous at $Ax^*-b$ relative to $A(\dom\varphi)-b$, i.e., there exists $\varepsilon > 0$ and $L_\psi>0$ such that
\[
\Vert \nabla^2\psi(Ax-b)-\nabla^2\psi(Ay-b) \Vert \leq L_\psi\Vert Ax-Ay \Vert, \quad \forall x,y \in B_{\varepsilon}(x^*)  \cap \dom\varphi.
\]
\label{ass_lipschitz_cvx}
\item $\Vert r(x) \Vert$ provides a local Hölderian error bound for problem \eqref{problem} on $B_{\varepsilon}(x^*) \cap \dom \varphi$, i.e., there exist constants $\beta>0$ and $q > \max\{\delta,1-\delta\}$ such that
\begin{equation}
\beta\dist(x,X^*) \leq \Vert r(x) \Vert^q, \quad \forall x \in B_{\varepsilon}(x^*) \cap \dom\varphi,
\end{equation}
where $ \delta > 0 $ denotes the constant from Algorithm~\ref{algo_proxRegNewton_cvx}.
\label{ass_luo_tseng_errbound_cvx}
\end{enumerate}
\label{assumption_loc_prox_cvx}
\end{assumption}

\noindent
Note that Lemma~\ref{lemma_equiv} and Theorem~\ref{global_convergence_cvx} ensure that $\mathcal K$ is an infinite set. Hence, the subsequence $\{x^k\}_{\mathcal K}$ in Assumption~\ref{ass_existence_cvx} is well-defined. Define 
$$
   \varepsilon_0 := \min \big\{ \varepsilon, \varepsilon / \Vert A \Vert \big\}
   \leq \varepsilon,
$$ 
where $ \varepsilon > 0 $
denotes the radius from Assumption~\ref{ass_lipschitz_cvx}. For $x,y \in B_{\varepsilon_0}(x^*) \cap \dom\varphi$, it then follows from \eqref{eq_grad_hess_f} and Assumption \ref{ass_lipschitz_cvx}¸ that $\nabla^2f$ is locally Lipschitz continuous at $x^*$ relative to $\dom\varphi$ with Lipschitz constant $L := \Vert A \Vert^3 L_\psi$, i.e. 
\begin{equation}
\label{eq_locLip_hess_f}
\Vert \nabla^2 f(x) - \nabla^2 f(y) \Vert \leq L \Vert x - y \Vert, \quad \forall x,y \in B_{\varepsilon_0}(x^*).
\end{equation}
This, in turn, implies that
\begin{equation}
\Vert \nabla f(x)-\nabla f(y)-\nabla^2f(x)(x-y) \Vert \leq \frac{L}{2}\Vert x-y \Vert^2, \quad \forall x,y \in B_{\varepsilon_0}(x^*).
\label{eq_LipContCorr_cvx}
\end{equation}
Furthermore, since
$f$ is twice continuously differentiable, $\nabla f$  is continuously differentiable and, therefore, locally Lipschitz continuous. Consequently,
there exists a constant $L_g>0$ such that
\begin{align}
\label{eq_LocLip}
\Vert \nabla f(x)-\nabla f(y) \Vert \leq L_g\Vert x - y \Vert, \quad \forall x,y \in B_{\varepsilon_0}(x^*).
\end{align}
In particular, we therefore have
\begin{align}
\label{eq_Hess_loc_bd}
\Vert \nabla^2f(x) \Vert \leq L_g, \quad \forall x \in B_{\varepsilon_0}(x^*).
\end{align}
In the following, for each $k \geq 0$, we denote by $\widetilde x^k$ a point satisfying the properties
\begin{equation}
\Vert x^k-\widetilde x^k \Vert = \dist(x^k,X^*), \quad \widetilde x^k \in X^*,
\end{equation}
i.e., $ \widetilde x^k $ is a (not necessarily unique) projection of $ x^k $
onto the nonempty and closed (not necessarily convex) set $ X^* $.

\begin{lemma}
Suppose that Assumptions \ref{assumption_loc_prox_cvx} hold. Then, for every iteration $k \geq 0$ with $x^k \in B_{\varepsilon_0 / 2}(x^*)$, it holds that
\[
\Vert r(x^k) \Vert \leq (2+L_g)\dist(x^k,X^*).
\]
\label{lemma_res_ineq}
\end{lemma}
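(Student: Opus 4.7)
The plan is to combine three simple ingredients: that $\widetilde x^k \in X^* \subseteq S^*$ is a stationary point (so $r(\widetilde x^k)=0$), the non-expansiveness of $\prox_\varphi$ from Section~\ref{Sec:Prelims}, and the local Lipschitz estimate \eqref{eq_LocLip} for $\nabla f$ on $B_{\varepsilon_0}(x^*)$. The only preparatory issue is a domain check, since \eqref{eq_LocLip} is only guaranteed on $B_{\varepsilon_0}(x^*)$.

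First I would locate $\widetilde x^k$ inside $B_{\varepsilon_0}(x^*)$. Since $x^* \in X^*$, the projection distance satisfies
\[
\Vert x^k - \widetilde x^k \Vert = \dist(x^k, X^*) \leq \Vert x^k - x^* \Vert \leq \varepsilon_0/2,
\]
and hence by the triangle inequality $\Vert \widetilde x^k - x^* \Vert \leq \Vert \widetilde x^k - x^k \Vert + \Vert x^k - x^* \Vert \leq \varepsilon_0$. Thus both $x^k$ and $\widetilde x^k$ lie in $B_{\varepsilon_0}(x^*)$, which is precisely the region on which \eqref{eq_LocLip} holds.

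Next I would exploit that $\widetilde x^k$ is strongly stationary, hence stationary, so by \eqref{eq_stat_point_char} we have $r(\widetilde x^k) = 0$. This lets me write $\Vert r(x^k) \Vert = \Vert r(x^k) - r(\widetilde x^k) \Vert$, unfold the definition \eqref{Eq:Residual-function} of $r$, and apply the triangle inequality together with the nonexpansiveness of $\prox_\varphi$:
\[
\Vert r(x^k) \Vert \leq \Vert x^k - \widetilde x^k \Vert + \Vert (x^k - \nabla f(x^k)) - (\widetilde x^k - \nabla f(\widetilde x^k)) \Vert \leq 2\Vert x^k - \widetilde x^k \Vert + \Vert \nabla f(x^k) - \nabla f(\widetilde x^k) \Vert.
\]
Invoking \eqref{eq_LocLip} on the last term with constant $L_g$ and using $\Vert x^k - \widetilde x^k \Vert = \dist(x^k, X^*)$ yields the stated bound $(2+L_g)\dist(x^k, X^*)$.

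There is no real obstacle here; the only thing that needs care is verifying that $\widetilde x^k$ is within the Lipschitz neighborhood, which is why the hypothesis $x^k \in B_{\varepsilon_0/2}(x^*)$ (rather than $B_{\varepsilon_0}(x^*)$) appears. The argument does not use Assumption~\ref{ass_lipschitz_cvx} or Assumption~\ref{ass_luo_tseng_errbound_cvx} at all — only Assumption~\ref{ass_existence_cvx} (to know $X^* \neq \emptyset$ so that $\widetilde x^k$ exists) and the local Lipschitzness of $\nabla f$ that follows from $f \in C^2$.
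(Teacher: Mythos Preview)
Your proof is correct and follows essentially the same route as the paper: verify $\widetilde x^k \in B_{\varepsilon_0}(x^*)$ via the triangle inequality, use $r(\widetilde x^k)=0$, then combine the nonexpansiveness of $\prox_\varphi$ with the local Lipschitz bound \eqref{eq_LocLip} on $\nabla f$. Your domain check is even slightly sharper (using $x^*\in X^*$ to get $\Vert x^k-\widetilde x^k\Vert\le \varepsilon_0/2$ directly), but the argument is otherwise identical to the paper's.
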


\begin{proof}
First observe that 
\begin{align}
\label{eq_lemma_res_ineq_1}
\Vert \widetilde x^k-x^* \Vert \leq \Vert x^k-x^* \Vert + \Vert \widetilde x^k-x^k \Vert \leq 2 \Vert x^k-x^* \Vert,
\end{align}
i.e., for $x^k \in B_{\varepsilon_0/2}(x^*)$, it holds that $\widetilde x^k \in B_{\varepsilon_0}(x^*)$. Remembering the definition of $\widetilde x^k$, 
we obtain
\begin{align*}
\Vert r(x^k) \Vert&=\Vert r(x^k)-r(\widetilde x^k) \Vert \\
&= \Vert \prox_\varphi(x^k-\nabla f(x^k))-x^k-\prox_\varphi(\widetilde x^k-\nabla f(\widetilde x^k))+\widetilde x^k \Vert \\
&\leq \Vert \prox_\varphi(x^k-\nabla f(x^k))-\prox_\varphi(\widetilde x^k-\nabla f(\widetilde x^k)) \Vert+\Vert x^k-\widetilde x^k \Vert \\
&\leq \Vert x^k-\widetilde x^k-\nabla f(x^k)+\nabla f(\widetilde x^k) \Vert+\Vert x^k-\widetilde x^k \Vert \\
&\leq \Vert \nabla f(x^k)-\nabla f(\widetilde x^k) \Vert+2\Vert x^k-\widetilde x^k \Vert \\
&\leq (2+L_g)\dist(x^k,X^*),
\end{align*}
where the second inequality follows from the non-expansiveness of the proximity operator and the last inequality follows from \eqref{eq_LocLip}, taking
into account that $ x^k, \widetilde{x}^k \in B_{\varepsilon_0} (x^*) $.
\end{proof}

\noindent
The following lemma is almost identical to \cite[Lemma 4.2]{liu_pan_wu_yang_2024}. For the convenience of our readers, its proof is provided here, with slight adaptations to our case.

\begin{lemma}
\label{lemma_loc_inexact}
For each $k \in \mathcal K$, it holds that $\Vert \hat x^k-\overline x^k \Vert \leq \nu_{min}^{-1} \theta (1+\Vert G_k \Vert)\Vert r(x^k) \Vert^{1+\tau-\delta}$.
\end{lemma}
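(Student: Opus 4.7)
The plan is to exploit the proximal characterization of both the exact minimizer $\overline x^k$ and the inexact solution $\hat x^k$, apply monotonicity of $\partial \varphi$, and then combine the resulting estimate with the inexactness criterion \eqref{eq_inexactness} and the lower bound on $\mu_k$ that is specific to iterations $k \in \mathcal K$.

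First, I would use \eqref{eq_prox_subdiff} to translate the relations $R_k(\overline x^k)=0$ and $\hat x^k - R_k(\hat x^k) = \prox_\varphi\bigl(\hat x^k - \nabla f(x^k) - G_k(\hat x^k - x^k)\bigr)$ into subgradient inclusions, namely
\begin{align*}
-\nabla f(x^k) - G_k(\overline x^k - x^k) &\in \partial\varphi(\overline x^k), \\
R_k(\hat x^k) - \nabla f(x^k) - G_k(\hat x^k - x^k) &\in \partial\varphi\bigl(\hat x^k - R_k(\hat x^k)\bigr).
\end{align*}
Applying monotonicity of $\partial\varphi$ to these two inclusions and simplifying, the $\nabla f(x^k)$ terms cancel and I obtain
\[
(\hat x^k-\overline x^k)^\top G_k(\hat x^k-\overline x^k) + \Vert R_k(\hat x^k)\Vert^2 \;\leq\; \bigl\langle R_k(\hat x^k),\,(I+G_k)(\hat x^k - \overline x^k)\bigr\rangle.
\]

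Next, dropping the nonnegative term $\Vert R_k(\hat x^k)\Vert^2$, using $G_k \succeq \mu_k I$ on the left-hand side, and applying Cauchy--Schwarz together with $\Vert I+G_k\Vert \leq 1+\Vert G_k\Vert$ on the right-hand side, I get
\[
\mu_k\Vert \hat x^k - \overline x^k\Vert^2 \;\leq\; (1+\Vert G_k\Vert)\Vert R_k(\hat x^k)\Vert\,\Vert \hat x^k - \overline x^k\Vert.
\]
The case $\hat x^k = \overline x^k$ is trivial; otherwise, dividing by $\Vert \hat x^k - \overline x^k\Vert$ and invoking the inexactness criterion $\Vert R_k(\hat x^k)\Vert \leq \theta \Vert r(x^k)\Vert^{1+\tau}$ from \eqref{eq_inexactness} yields
\[
\Vert \hat x^k - \overline x^k\Vert \;\leq\; \frac{\theta(1+\Vert G_k\Vert)}{\mu_k}\,\Vert r(x^k)\Vert^{1+\tau}.
\]

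The final, and only genuinely delicate, step uses the fact that $k \in \mathcal K$. For such $k$, Lemma~\ref{lemma_alg_prop_K}\ref{eq_lemma_alg_prop_K_eq} (together with $\overline r_0 = \Vert r(x^0)\Vert$ for $k=0$) gives $\overline r_k = \Vert r(x^k)\Vert$, and Lemma~\ref{lemma_alg_prop_mu}(a) gives $\nu_k \geq \nu_{min}$. Therefore
\[
\mu_k = \nu_k \overline r_k^{\,\delta} \;\geq\; \nu_{min}\Vert r(x^k)\Vert^{\delta}.
\]
Plugging this lower bound into the estimate above produces the claimed inequality. The one step that requires care is the derivation of the monotonicity inequality: I must keep track of the signs carefully so that the $\nabla f(x^k)$ terms truly cancel and the $G_k$-quadratic term ends up on the correct side.
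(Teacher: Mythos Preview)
Your proposal is correct and follows essentially the same route as the paper's own proof: both obtain the two subgradient inclusions, apply monotonicity of $\partial\varphi$ to produce the inequality $\mu_k\Vert \hat x^k-\overline x^k\Vert \leq \theta(1+\Vert G_k\Vert)\Vert r(x^k)\Vert^{1+\tau}$, and then use $\overline r_k=\Vert r(x^k)\Vert$ together with $\nu_k\geq\nu_{min}$ for $k\in\mathcal K$ to replace $\mu_k^{-1}$ by $\nu_{min}^{-1}\Vert r(x^k)\Vert^{-\delta}$. Your explicit handling of the case $k=0$ (where Lemma~\ref{lemma_alg_prop_K}\ref{eq_lemma_alg_prop_K_eq} does not directly apply but $\overline r_0=\Vert r(x^0)\Vert$ holds by definition) is in fact slightly more careful than the paper's write-up.
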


\begin{proof}
Consider a fixed index $k \in \mathcal K$. From the definition of $R_k(\hat x^k)$ and relation \eqref{eq_prox_subdiff}, it follows that 
\begin{align*}
& \hspace*{-15mm}\hat x^k - R_k(\hat x^k) \in \hat x^k - \nabla f(x^k) - G_k d^k - \partial \varphi (\hat x^k-R_k(\hat x^k)) \\
\Longleftrightarrow \; & R_k(\hat x^k) - \nabla f(x^k) - G_k d^k \in \partial\varphi (\hat x^k - R_k(\hat x^k)).
\end{align*}
Since $\overline x^k$ is the exact solution of \eqref{reg_proximal_newton_cvx} it holds by Fermat's theorem that 
\[
-\nabla f(x^k) - G_k(\overline x^k-x^k) \in \partial \varphi (\overline x^k).
\]
By the monotonicity of $\partial\varphi$, we have
\begin{align*}
\langle R_k(\hat x^k) - G_k(\hat x^k-\overline x^k), \hat x^k - R_k(\hat x^k) - \overline x^k \rangle \geq 0.
\end{align*}
Reordering yields
\begin{align*}
\langle \hat x^k - \overline x^k, G_k(\hat x^k-\overline x^k) \rangle &\leq \langle R_k(\hat x^k), \hat x^k - \overline x^k - R_k(\hat x^k) + G_k(\hat x^k-\overline x^k) \rangle \\
&\leq \langle R_k(\hat x^k), (I+G_k) (\hat x^k-\overline x^k) \rangle.
\end{align*}
Combining this inequality with $G_k \succeq \mu_kI$ and using \eqref{eq_inexactness} yields 
\[
\mu_k \Vert \hat x^k - \overline x^k \Vert^2 \leq (1+\Vert G_k \Vert ) \Vert R_k(\hat x^k) \Vert \Vert \hat x^k-\overline x^k \Vert \leq \theta (1+\Vert G_k \Vert)\Vert r(x^k) \Vert^{1+\tau} \Vert \hat x^k - \overline x^k \Vert.
\]
Dividing by $\mu_k\Vert \hat x^k - \overline x^k \Vert$ (the case $\Vert \hat x^k-\overline x^k \Vert = 0$ is trivial) and using Lemma \ref{lemma_alg_prop_K}\ref{eq_lemma_alg_prop_K_eq} along with $\nu_k \geq \nu_{min}$ demonstrates that the desired result holds.
\end{proof}

\noindent
The following lemma is identical to \cite[Lemma 4.4]{liu_pan_wu_yang_2024}. Again, its proof is presented here, only adapting the notation to our case.

\begin{lemma}
\label{lemma_Lambda}
Suppose that Assumptions \ref{assumption_loc_prox_cvx} hold. Then for every $k \geq 0$ with $x^k \in B_{\varepsilon_0/2}(x^*)$ it holds that
\[
\Lambda_k \leq aL_\psi \Vert A \Vert \dist(x^k,X^*).
\]
\end{lemma}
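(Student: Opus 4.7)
The plan is to exploit strong stationarity of the projection $\widetilde{x}^k$ together with the local Lipschitz continuity of $\nabla^2\psi$ from Assumption~\ref{ass_lipschitz_cvx}, and to pass from a matrix-norm estimate to an eigenvalue estimate via Weyl's inequality.

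First, I would observe that by definition of a strongly stationary point, every $y \in X^*$ satisfies $\nabla^2\psi(Ay-b) \succeq 0$, i.e.\ $\lambda_{\min}(\nabla^2\psi(Ay-b)) \geq 0$. Applying this to $\widetilde{x}^k \in X^*$ therefore gives
\[
\lambda_{\min}\big(\nabla^2\psi(A\widetilde{x}^k - b)\big) \geq 0.
\]

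Next, I would verify the points lie in the right neighborhood. Since $\varepsilon_0 \leq \varepsilon$, the hypothesis $x^k \in B_{\varepsilon_0/2}(x^*)$ gives $x^k \in B_\varepsilon(x^*)$, and \eqref{eq_lemma_res_ineq_1} yields $\widetilde{x}^k \in B_{\varepsilon_0}(x^*) \subseteq B_\varepsilon(x^*)$; both points also lie in $\dom\varphi$ ($x^k$ by construction of the algorithm, $\widetilde{x}^k$ as a strongly stationary point). Assumption~\ref{ass_lipschitz_cvx} then applies and yields
\[
\big\Vert \nabla^2\psi(Ax^k-b) - \nabla^2\psi(A\widetilde{x}^k-b) \big\Vert \leq L_\psi \Vert A(x^k - \widetilde{x}^k) \Vert \leq L_\psi \Vert A \Vert \, \dist(x^k, X^*).
\]

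Finally, I would invoke Weyl's inequality, which states $|\lambda_{\min}(M) - \lambda_{\min}(N)| \leq \Vert M - N \Vert$ for symmetric matrices. Combined with the previous two displays, this gives
\[
\lambda_{\min}\big(\nabla^2\psi(Ax^k-b)\big) \geq -L_\psi \Vert A \Vert \, \dist(x^k, X^*),
\]
so that $[-\lambda_{\min}(\nabla^2\psi(Ax^k-b))]_+ \leq L_\psi \Vert A \Vert \, \dist(x^k, X^*)$. Multiplying by $a$ and recalling the definition of $\Lambda_k$ in \eqref{Eq:Lambdak} finishes the proof.

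I don't anticipate any real obstacle here; the only step that needs care is the verification that both $x^k$ and $\widetilde{x}^k$ lie in $B_\varepsilon(x^*) \cap \dom\varphi$ so that the Lipschitz bound is legitimate, which is exactly why the definition $\varepsilon_0 = \min\{\varepsilon, \varepsilon/\Vert A\Vert\}$ and inequality \eqref{eq_lemma_res_ineq_1} were established earlier in the section.
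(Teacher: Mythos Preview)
Your proof is correct and, in fact, slightly cleaner than the paper's. Both arguments rest on the same two ingredients: strong stationarity of $\widetilde{x}^k$ (so that $\lambda_{\min}(\nabla^2\psi(A\widetilde{x}^k-b))\geq 0$) and Weyl's inequality for the Lipschitz continuity of $\lambda_{\min}$. The paper, however, first restricts to the case $\lambda_{\min}(\nabla^2\psi(Ax^k-b))<0$ and then splits further according to whether $\lambda_{\min}(\nabla^2\psi(A\widetilde{x}^k-b))$ equals zero or is strictly positive; in the latter case it invokes the intermediate value theorem along the segment $[x^k,\widetilde{x}^k]$ to locate a point where the minimum eigenvalue vanishes before applying the Lipschitz estimate. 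Your observation that Weyl's inequality combined with $\lambda_{\min}(\nabla^2\psi(A\widetilde{x}^k-b))\geq 0$ already yields $\lambda_{\min}(\nabla^2\psi(Ax^k-b))\geq -L_\psi\Vert A\Vert\,\dist(x^k,X^*)$ directly---and hence the bound on $[-\lambda_{\min}(\cdot)]_+$---makes this entire case distinction unnecessary. The neighborhood verification via \eqref{eq_lemma_res_ineq_1} is handled identically in both proofs.
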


\begin{proof}
Let $x^k \in B_{\varepsilon_0/2}(x^*)$ be fixed. By definition of $\Lambda_k$, it suffices to consider the case where $\lambda_{min}(\nabla^2 \psi(Ax^k-b)) < 0$. In view of \eqref{eq_lemma_res_ineq_1}, we obtain $\Vert \widetilde x^k - x^* \Vert \leq \varepsilon_0$, and consequently $\widetilde x^k \in B_{\varepsilon}(x^*) \cap \dom \varphi$. From $\widetilde x^k \in X^*$, we have $\nabla^2\psi(A\widetilde x^k - b) \succeq 0$. When $\lambda_{min}(\nabla^2\psi(A \widetilde x^k - b)) = 0$, then 
\begin{align*}
\Lambda_k &= -a \lambda_{min} (\nabla^2 \psi(Ax^k-b)) = a [\lambda_{min}(\nabla^2 \psi(A \widetilde x^k - b)) - \lambda_{min}(\nabla^2\psi(Ax^k-b))] \\
&\leq a \Vert \nabla^2 \psi(A\widetilde x^k-b) - \nabla^2 \psi(Ax^k-b) \Vert \leq a L_\psi \Vert A \Vert \Vert x^k - \widetilde x^k \Vert, 
\end{align*}
where the first inequality is by the Lipschitz continuity of the function $\mathbb S^n \ni Z \mapsto \lambda_{min}(Z)$ with modulus $1$ (follows from Weyl's inequality), and the second one is using 
Assumption~\ref{ass_lipschitz_cvx}. So we only need to consider the case $\lambda_{min}(\nabla^2 \psi(A \widetilde x^k - b)) > 0$. For this purpose, let $\phi_k(t) := \lambda_{min}[\nabla^2 \psi(Ax^k-b+tA(\widetilde x^k-x^k))]$ for $t \geq 0$. Clearly, $\phi_k$ is continuous on any open interval containing $[0,1]$. Note that $\phi_k(0) < 0$ and $\phi_k(1) > 0$. 
Hence, there exists $\overline t_k \in (0,1)$ such that $\phi_k(\overline t_k) = 0$. Consequently,
\begin{align*}
\Lambda_k &= -a \lambda_{min} (\nabla^2 \psi(Ax^k-b)) \\
&= a[\lambda_{min}(\nabla^2 \psi(Ax^k-b+\overline t_k A(\widetilde x^k - x^k))) - \lambda_{min}(\nabla^2 \psi(Ax^k-b))] \\
&\leq a \Vert \nabla^2 \psi(Ax^k-b+\overline t_k A(\widetilde x^k-x^k))-\nabla^2\psi(Ax^k-b) \Vert \leq a L_\psi \Vert A \Vert \Vert \widetilde x^k- x^k \Vert. 
\end{align*}
This shows that the desired result holds.
\end{proof}

\begin{lemma}
Suppose that Assumption~\ref{assumption_loc_prox_cvx} holds. Define $\varepsilon_1 := \min\big\{ \frac{1}{2+L_g},\frac{\varepsilon_0}{2}\big\}$. Then, for $k \in \mathcal K$ with $x^k \in B_{\varepsilon_1}(x^*)$, it holds that 
\[
\Vert d^k \Vert \leq c\dist(x^k,X^*),
\]
where $c:=\nu_{min}^{-1} \theta (2+L_g)^{1+\tau-\delta} (1+L_g+aL + \overline\nu (2+L_g)^\delta) + \frac{L+2aL}{2\nu_{min}\beta} + 2$.
\label{lemma_d^k_ineq_cvx}
\end{lemma}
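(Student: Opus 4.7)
The plan is to decompose $\|d^k\|$ via the triangle inequality through two auxiliary points, namely the exact subproblem minimizer $\overline{x}^k$ and the projection $\widetilde{x}^k$ of $x^k$ onto $X^*$, giving
\[
\|d^k\| \leq \|\hat{x}^k - \overline{x}^k\| + \|\overline{x}^k - \widetilde{x}^k\| + \|\widetilde{x}^k - x^k\|.
\]
The third term equals $\dist(x^k, X^*)$ by definition of $\widetilde{x}^k$ and will contribute one to $c$. The first term is the inexactness error, controllable via Lemma~\ref{lemma_loc_inexact}; the middle term is the main work, where the Hölderian error bound must enter. In each piece, my aim is to end up with a constant multiple of $\dist(x^k, X^*)$, using Lemma~\ref{lemma_res_ineq} to pass between $\|r(x^k)\|$ and $\dist(x^k, X^*)$ and using $\varepsilon_1 \leq 1/(2+L_g)$ to guarantee $\|r(x^k)\| \leq 1$ and $\dist(x^k, X^*) \leq 1$ in the relevant ball.

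For the first contribution, Lemma~\ref{lemma_loc_inexact} yields $\|\hat{x}^k - \overline{x}^k\| \leq \nu_{min}^{-1}\theta(1 + \|G_k\|)\|r(x^k)\|^{1+\tau-\delta}$. The factor $1 + \|G_k\|$ splits via $G_k = \nabla^2 f(x^k) + \Lambda_k A^\top A + \mu_k I$ and is bounded by $1 + L_g + aL + \overline\nu(2+L_g)^\delta$ using \eqref{eq_Hess_loc_bd}, Lemma~\ref{lemma_Lambda}, Lemma~\ref{lemma_alg_prop_K}\ref{eq_lemma_alg_prop_K_mu}, and the fact that $\dist(x^k,X^*), \|r(x^k)\| \leq 1$. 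Since $\tau \geq \delta$ and $\|r(x^k)\| \leq 1$, the residual factor satisfies $\|r(x^k)\|^{1+\tau-\delta} \leq (2+L_g)^{1+\tau-\delta}\dist(x^k, X^*)$ by Lemma~\ref{lemma_res_ineq}, and multiplying these together produces exactly the first summand of $c$.

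For the middle contribution I would write down the first-order optimality conditions, namely $-\nabla f(x^k) - G_k(\overline{x}^k - x^k) \in \partial\varphi(\overline{x}^k)$ and $-\nabla f(\widetilde{x}^k) \in \partial\varphi(\widetilde{x}^k)$ (the latter since $\widetilde{x}^k \in X^*$ is in particular stationary). Monotonicity of $\partial\varphi$ combined with $G_k \succeq \mu_k I$ then gives
\[
\mu_k \|\overline{x}^k - \widetilde{x}^k\| \leq \|\nabla f(\widetilde{x}^k) - \nabla f(x^k) - G_k(\widetilde{x}^k - x^k)\|.
\]
Splitting $G_k$ as above, \eqref{eq_LipContCorr_cvx} bounds the first-order part by $\tfrac{L}{2}\dist(x^k,X^*)^2$, Lemma~\ref{lemma_Lambda} bounds the $\Lambda_k A^\top A$ part by $aL\dist(x^k, X^*)^2$, and the $\mu_k I$ part contributes $\mu_k\dist(x^k, X^*)$, yielding $\|\overline{x}^k - \widetilde{x}^k\| \leq \tfrac{L+2aL}{2\mu_k}\dist(x^k,X^*)^2 + \dist(x^k,X^*)$.

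The hard part is then converting $\dist(x^k,X^*)^2/\mu_k$ into $O(\dist(x^k,X^*))$, and this is exactly where the Hölderian error bound becomes essential. Since $k \in \mathcal K$, Lemma~\ref{lemma_alg_prop_K}\ref{eq_lemma_alg_prop_K_eq} together with $\nu_k \geq \nu_{min}$ gives the lower bound $\mu_k \geq \nu_{min}\|r(x^k)\|^\delta$, while Assumption~\ref{ass_luo_tseng_errbound_cvx} supplies $\dist(x^k, X^*) \leq \|r(x^k)\|^q/\beta$. Combining these and using $q > \delta$ with $\|r(x^k)\| \leq 1$ yields $\dist(x^k, X^*)^2/\mu_k \leq \dist(x^k, X^*)/(\nu_{min}\beta)$. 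Substituting back gives $\|\overline{x}^k - \widetilde{x}^k\| \leq \bigl(\tfrac{L+2aL}{2\nu_{min}\beta}+1\bigr)\dist(x^k, X^*)$, and summing the three pieces of the triangle inequality reproduces precisely the stated constant $c$.
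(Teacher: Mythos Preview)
Your argument is correct and follows essentially the same route as the paper: the paper also splits $\|d^k\|\le\|\hat x^k-\overline x^k\|+\|\overline x^k-\widetilde x^k\|+\|\widetilde x^k-x^k\|$ (it just groups the last two as $\|\overline x^k-x^k\|$ first), bounds the middle term via (strong) monotonicity exactly as you do, converts $\dist(x^k,X^*)^2/\mu_k$ using $\mu_k\ge\nu_{\min}\|r(x^k)\|^\delta$ together with the H\"olderian error bound and $q>\delta$, and handles the inexactness term with Lemma~\ref{lemma_loc_inexact} and the same estimate for $1+\|G_k\|$. The only small omission is that you should note $\widetilde x^k\in B_{\varepsilon_0}(x^*)$ (which follows from $\varepsilon_1\le\varepsilon_0/2$ and $\|\widetilde x^k-x^*\|\le 2\|x^k-x^*\|$) before invoking \eqref{eq_LipContCorr_cvx}.
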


\begin{proof}
Let $k \in \mathcal K$ and $x^k \in B_{\varepsilon_1}(x^*)$ be fixed. From the definition of $\widetilde x^k$ it follows that $0 \in \nabla f(\widetilde x^k)+\partial\varphi(\widetilde x^k)$ and thus
\begin{equation}
\nabla f(x^k)-\nabla f(\widetilde x^k)+(H_k+\mu_kI)(\widetilde x^k-x^k) \in \nabla f(x^k)+(H_k+\mu_kI)(\widetilde x^k-x^k)+\partial\varphi(\widetilde x^k).
\end{equation}
Together with
\begin{equation}
0 \in \nabla f(x^k)+(H_k+\mu_kI)(\overline x^k-x^k)+\partial\varphi(\overline x^k)
\end{equation}
it follows from the strong monotonicity of the mapping $\nabla f(x^k)+(H_k+\mu_kI)(\cdot-x^k)+\partial\varphi(\cdot)$ on $\R^n$ that
\begin{equation}
\label{eq_lemma_d^k_ineq_cvx_1}
\left\langle \nabla f(x^k)-\nabla f(\widetilde x^k)+(H_k+\mu_kI)(\widetilde x^k-x^k), \, \widetilde x^k-\overline x^k\right\rangle \geq \mu_k\Vert \widetilde x^k-\overline x^k\Vert^2.
\end{equation}
As in \eqref{eq_lemma_res_ineq_1} it holds that $\widetilde x^k \in B_{\varepsilon_0}(x^*)$ and from Lemma \ref{lemma_res_ineq} it follows that $\Vert r(x^k) \Vert \leq 1$. We now get
\begin{align}
\label{eq_lemma_dk_1}
\begin{split}
\Vert \overline x^k-x^k \Vert &= \Vert \overline x^k-\widetilde x^k+\widetilde x^k - x^k \Vert 
\leq \Vert \overline x^k-\widetilde x^k \Vert + \Vert \widetilde x^k-x^k \Vert \\
&\leq \frac{1}{\mu_k}\Vert \nabla f(x^k)-\nabla f(\widetilde x^k)+(H_k+\mu_kI)(\widetilde x^k-x^k)\Vert+\Vert \widetilde x^k-x^k \Vert \\
&\leq \frac{1}{\mu_k} \left( \Vert \nabla f(x^k)-\nabla f(\widetilde x^k)+H_k(\widetilde x^k-x^k)\Vert\right)+2\Vert \widetilde x^k-x^k \Vert \\
&\leq \frac{1}{\mu_k}\left( \frac{L}{2}\Vert \widetilde x^k-x^k \Vert^2 + \Lambda_k \Vert A^2 \Vert \Vert \widetilde x^k-x^k \Vert\right) + 2\Vert \widetilde x^k-x^k \Vert \\
&\leq \frac{L+2aL_\psi\Vert A \Vert^3}{2\mu_k}\dist(x^k,X^*)^2 + 2\dist(x^k,X^*)  \\
&= \frac{L+2aL}{2\nu_k\Vert r(x^k) \Vert^\delta}\dist(x^k,X^*)^2 + 2\dist(x^k,X^*) \\
&\leq \frac{L+2aL}{2\nu_k\Vert r(x^k) \Vert^q}\dist(x^k,X^*)^2 + 2\dist(x^k,X^*) \\
&\leq \frac{L+2aL}{2\nu_{min}\beta\dist(x^k,X^*)}\dist(x^k,X^*)^2 + 2\dist(x^k,X^*) \\
&= \left(\frac{L+2aL}{2\nu_{min}\beta} + 2\right)\dist(x^k,X^*), 
\end{split}
\end{align}
where we used \eqref{eq_lemma_d^k_ineq_cvx_1} together with the Cauchy-Schwarz inequality in the second, the triangle inequality and \eqref{eq_LipContCorr_cvx} in the fourth, Lemma \ref{lemma_Lambda} and the definition of $\widetilde x^k$ in the fifth, $q \geq \delta$ together with $\Vert r(x^k) \Vert \leq 1$ in the sixth, and 
Assumption~\ref{ass_luo_tseng_errbound_cvx} in the seventh inequality. In the second equality we used Lemma \ref{lemma_alg_prop_K}\ref{eq_lemma_alg_prop_K_eq}. Since $k \in \mathcal K$, it holds that
\begin{align*}
\Vert G_k \Vert &\leq \Vert \nabla^2f(x^k) \Vert + \Lambda_k \Vert A^\top A \Vert + \mu_k \leq L_g + aL\dist(x^k,X^*) + \overline\nu \Vert r(x^k) \Vert^\delta \\
&\leq L_g+aL\dist(x^k,X^*) + \overline\nu (2+L_g)^\delta \dist(x^k,X^*)^\delta \\
&\leq L_g+aL + \overline\nu (2+L_g)^\delta,
\end{align*}
where we used the triangle inequality in the first, \eqref{eq_Hess_loc_bd}, Lemma \ref{lemma_Lambda} and Lemma \ref{lemma_alg_prop_K}\ref{eq_lemma_alg_prop_K_mu} in the second, Lemma \ref{lemma_res_ineq} in the third, and $\dist(x^k,X^*) \leq 1$
(simply because $ x^k \in B_{\varepsilon_1} (x^*) $ and 
$ \varepsilon_1 < 1 $) in the last inequality.
We now obtain
\begin{align*}
\Vert d^k \Vert &= \Vert \hat x^k-\overline x^k + \overline x^k - x^k \Vert \leq \Vert \hat x^k - \overline x^k \Vert + \Vert \overline x^k-x^k \Vert \\
&\leq \nu_{min}^{-1} \theta (1+\Vert G_k \Vert)\Vert r(x^k) \Vert^{1+\tau-\delta} + \Vert \overline x^k-x^k \Vert \\
&\leq \nu_{min}^{-1} \theta (1+L_g+aL + \overline\nu (2+L_g)^\delta)(2+L_g)^{1+\tau-\delta}\dist(x^k,X^*) + \Vert \overline x^k-x^k \Vert \\
&\leq c \dist(x^k,X^*),
\end{align*}
where we used Lemma \ref{lemma_loc_inexact} in the second, Lemma \ref{lemma_res_ineq}, $\dist(x^k,X^*) \leq 1$, $\tau \geq \delta$ and the previous inequality in the third, and \eqref{eq_lemma_dk_1} in the last inequality.
\end{proof}

\begin{lemma}
\label{lemma_superlinear}
Suppose that Assumption~\ref{assumption_loc_prox_cvx} holds. Define $\varepsilon_2 := \min\big\{\frac{1}{2+L_g},\frac{1}{aL_\psi\Vert A \Vert},\frac{\varepsilon_0}{1+c}\big\}$, where $ c > 0 $ is the 
constant from Lemma~\ref{lemma_d^k_ineq_cvx}. For $k \in \mathcal K$ with $x^k \in B_{\varepsilon_2}(x^*)$, it then holds that
\begin{equation}
\label{eq_lemma_superlinear_1}
\Vert r(\hat x^k) \Vert \leq \hat c \Vert r(x^k) \Vert^{\min\{\delta+q,1+\tau\}},
\end{equation}
\begin{equation}
\label{eq_lemma_superlinear_2}
\dist(\hat x^k,X^*) \leq \widetilde c\dist(x^k,X^*)^{(1+\delta)q},
\end{equation}
with constants $\hat c$ and $\widetilde c$ defined by
\begin{align*}
\hat c &:= \frac{c^2L+2 a c L_{\psi} \Vert A \Vert^3+2\beta c\overline\nu }{2\beta^2}+\theta, \\
\widetilde c &:= \frac{1}{\beta}\left(\frac{c^2L}{2}+ a c L_{\psi}\Vert A \Vert^3+c\overline\nu (2+L_g)^\delta + \theta (2+L_g)^{1+\tau}\right)^q.
\end{align*}
\end{lemma}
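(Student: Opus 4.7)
The plan is to derive one master estimate for $\Vert r(\hat x^k)\Vert$ that mixes $\dist(x^k,X^*)$ and $\Vert r(x^k)\Vert$, and then to specialize it in two opposite directions: once via the local Hölderian error bound to obtain \eqref{eq_lemma_superlinear_1}, and once via Lemma~\ref{lemma_res_ineq} to obtain \eqref{eq_lemma_superlinear_2}. Before anything else, the preliminary step is to certify that $\hat x^k \in B_{\varepsilon_0}(x^*)\cap\dom\varphi$: since $x^k\in B_{\varepsilon_2}(x^*)\subseteq B_{\varepsilon_1}(x^*)$, Lemma~\ref{lemma_d^k_ineq_cvx} gives $\Vert d^k\Vert\leq c\,\dist(x^k,X^*)\leq c\varepsilon_2$, and the bound $\varepsilon_2\leq \varepsilon_0/(1+c)$ then forces $\Vert \hat x^k-x^*\Vert \leq(1+c)\varepsilon_2\leq\varepsilon_0$. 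Combined with $\varepsilon_2\leq 1/(2+L_g)$ and Lemma~\ref{lemma_res_ineq}, this also ensures $\Vert r(x^k)\Vert\leq 1$ and $\dist(x^k,X^*)\leq 1$, which will be used to collapse exponents later.

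The core inequality is $\Vert r(\hat x^k)-R_k(\hat x^k)\Vert\leq \Vert \nabla f(\hat x^k)-\nabla f(x^k)-G_k d^k\Vert$, obtained by subtracting the definitions of $r(\hat x^k)$ and $R_k(\hat x^k)$ and using the nonexpansiveness of $\prox_\varphi$. Splitting $G_k=\nabla^2 f(x^k)+\Lambda_k A^\top A+\mu_k I$, I would bound the first difference through \eqref{eq_LipContCorr_cvx} by $(L/2)\Vert d^k\Vert^2$, control $\Lambda_k$ through Lemma~\ref{lemma_Lambda}, and $\mu_k$ through Lemma~\ref{lemma_alg_prop_K}\ref{eq_lemma_alg_prop_K_mu}; then Lemma~\ref{lemma_d^k_ineq_cvx} replaces $\Vert d^k\Vert$ by $c\,\dist(x^k,X^*)$. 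Finally the triangle inequality together with the inexactness bound $\Vert R_k(\hat x^k)\Vert\leq\theta\Vert r(x^k)\Vert^{1+\tau}$ yields the master estimate
\[
\Vert r(\hat x^k)\Vert \leq \tfrac{c^2 L+2acL_\psi\Vert A\Vert^3}{2}\dist(x^k,X^*)^2 + c\overline\nu\,\dist(x^k,X^*)\,\Vert r(x^k)\Vert^{\delta} + \theta\,\Vert r(x^k)\Vert^{1+\tau}.
\]

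For \eqref{eq_lemma_superlinear_1}, I would apply Assumption~\ref{ass_luo_tseng_errbound_cvx} in the form $\dist(x^k,X^*)\leq\beta^{-1}\Vert r(x^k)\Vert^q$ inside the master estimate; the two distance terms become $\Vert r(x^k)\Vert^{2q}$ and $\Vert r(x^k)\Vert^{q+\delta}$, and using $q>\delta$ together with $\Vert r(x^k)\Vert\leq 1$, both are absorbed into $\Vert r(x^k)\Vert^{\min\{q+\delta,1+\tau\}}$ with a combined coefficient that collapses exactly to $\hat c$. For \eqref{eq_lemma_superlinear_2}, I would instead use Lemma~\ref{lemma_res_ineq} in the reverse direction to replace $\Vert r(x^k)\Vert$ by $(2+L_g)\dist(x^k,X^*)$ in the master estimate, producing powers $2$, $1+\delta$, and $1+\tau$ of $\dist(x^k,X^*)$; since $\delta\leq 1$, $\tau\geq\delta$ and $\dist(x^k,X^*)\leq 1$, the common minimum exponent is $1+\delta$, and the coefficient matches precisely the expression inside the $q$-th power of $\widetilde c$. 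Applying Assumption~\ref{ass_luo_tseng_errbound_cvx} now at $\hat x^k$ (legitimate since $\hat x^k\in B_{\varepsilon_0}(x^*)\cap\dom\varphi\subseteq B_\varepsilon(x^*)\cap\dom\varphi$) and raising the previous bound to the $q$-th power then produces \eqref{eq_lemma_superlinear_2}.

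The main obstacle is essentially bookkeeping: one must verify the exponent reductions $\min\{2q,q+\delta,1+\tau\}=\min\{q+\delta,1+\tau\}$ in part (1) and $\min\{2,1+\delta,1+\tau\}=1+\delta$ in part (2), both of which rely precisely on the standing hypotheses $q>\delta$, $\tau\geq\delta$, $\delta\leq 1$ combined with the smallness of $\Vert r(x^k)\Vert$ and $\dist(x^k,X^*)$ in the local regime. A more subtle point is the preliminary inclusion $\hat x^k\in B_{\varepsilon_0}(x^*)$, which is what allows the Lipschitz estimate \eqref{eq_LipContCorr_cvx}, Lemma~\ref{lemma_Lambda}, and the error bound to be applied at $\hat x^k$; this is precisely what dictates the specific form of $\varepsilon_2$.
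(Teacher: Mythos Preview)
Your proposal is correct and follows essentially the same route as the paper's own proof: establish $\hat x^k\in B_{\varepsilon_0}(x^*)$ and the smallness bounds $\Vert r(x^k)\Vert\leq 1$, $\dist(x^k,X^*)\leq 1$ from the definition of $\varepsilon_2$, use nonexpansiveness of $\prox_\varphi$ together with the decomposition of $G_k$ and \eqref{eq_LipContCorr_cvx}, Lemma~\ref{lemma_Lambda}, Lemma~\ref{lemma_d^k_ineq_cvx}, Lemma~\ref{lemma_alg_prop_K}\ref{eq_lemma_alg_prop_K_mu} and \eqref{eq_inexactness} to reach your ``master estimate'', and then specialize it via Assumption~\ref{ass_luo_tseng_errbound_cvx} for \eqref{eq_lemma_superlinear_1} and via Lemma~\ref{lemma_res_ineq} followed by Assumption~\ref{ass_luo_tseng_errbound_cvx} at $\hat x^k$ for \eqref{eq_lemma_superlinear_2}. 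Your exponent reductions $\min\{2q,q+\delta,1+\tau\}=\min\{q+\delta,1+\tau\}$ and $\min\{2,1+\delta,1+\tau\}=1+\delta$ are exactly the ones the paper performs, justified by $q>\delta$, $\delta\leq 1$, $\tau\geq\delta$.
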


\begin{proof}
Using the definition of $\varepsilon_2$ as well as Lemmas~\ref{lemma_res_ineq} and \ref{lemma_Lambda}, it follows that $\dist(x^k,X^*) \leq 1$, $\Vert r(x^k) \Vert \leq 1$ and $\Lambda_k \leq 1$ whenever $x^k \in B_{\varepsilon_2}(x^*)$. Additionally, for $x^k \in B_{\varepsilon_2}(x^*) \subseteq B_{\varepsilon_1}(x^*)$, it follows from Lemma \ref{lemma_d^k_ineq_cvx} that 
\[
\Vert \hat x^k-x^* \Vert \leq \Vert x^k-x^* \Vert + \Vert d^k \Vert \leq (1+c)\Vert x^k-x^* \Vert \leq \varepsilon_0,
\]
i.e., $\hat x^k \in B_{\varepsilon_0}(x^*)$. We now get
\begin{align*}
\Vert r(\hat x^k) \Vert &= \Vert \prox_\varphi(\hat x^k-\nabla f(\hat x^k))-\hat x^k \Vert \\
&= \Vert \prox_\varphi(\hat x^k-\nabla f(\hat x^k))- \prox_\varphi(\hat x^k-\nabla f(x^k)-(H_k+\mu_kI)d^k) - R_k(\hat x^k)\Vert \\
&\leq \Vert \prox_\varphi(\hat x^k-\nabla f(\hat x^k))- \prox_\varphi(\hat x^k-\nabla f(x^k)-(H_k+\mu_kI)d^k) \Vert + \Vert R_k(\hat x^k) \Vert \\
&\leq \Vert \nabla f(\hat x^k)-\nabla f(x^k)-(H_k+\mu_kI)d^k\Vert + \Vert R_k(\hat x^k) \Vert \\
&\leq \Vert \nabla f(\hat x^k)-\nabla f(x^k)-\nabla^2f(x^k)d^k \Vert + \Lambda_k \Vert A^\top A d^k \Vert + \mu_k \Vert d^k \Vert + \Vert R_k(\hat x^k) \Vert \\
&\leq \frac{L}{2}\Vert d^k \Vert^2 + \Lambda_k \Vert A \Vert^2 \Vert d^k \Vert + \mu_k \Vert d^k \Vert + \Vert R_k(\hat x^k) \Vert \\
&\leq \frac{c^2L}{2}\dist(x^k,X^*)^2 + a c L_{\psi} \Vert A \Vert^3 \dist(x^k,X^*)^2 + c\mu_k\dist(x^k,X^*) + \Vert R_k(\hat x^k) \Vert \\
&\leq \frac{c^2L}{2\beta^2}\Vert r(x^k) \Vert^{2q} + \frac{a c L_{\psi}\Vert A \Vert^3}{\beta^2}\Vert r(x^k) \Vert^{2q} + \frac{c\overline\nu }{\beta}\Vert r(x^k) \Vert^{\delta+q} + \theta\Vert r(x^k) \Vert^{1+\tau} \\
&\leq \left(\frac{c^2L+2 a c L_{\psi} \Vert A \Vert^3 +2\beta c\overline\nu }{2\beta^2}+\theta\right)\Vert r(x^k) \Vert^{\min\{\delta+q,1+\tau\}},
\end{align*}
where we used the nonexpansiveness in the second, \eqref{eq_LipContCorr_cvx} and $\Lambda_k \leq 1$ in the fourth, Lemma~\ref{lemma_Lambda} and 
Lemma~\ref{lemma_d^k_ineq_cvx} in the fifth, 
Assumption~\ref{ass_luo_tseng_errbound_cvx}, 
Lemma~\ref{lemma_alg_prop_K}\ref{eq_lemma_alg_prop_K_mu} and the inexactness criterion \eqref{eq_inexactness} in the sixth, and $q \geq \delta$ together with $\Vert r(x^k) \Vert \leq 1$ in the last inequality. Reusing the fifth inequality from above we also get
\begin{align*}
\Vert r(\hat x^k) \Vert &\leq \frac{c^2L}{2}\dist(x^k,X^*)^2 + 
a c L_{\psi} \Vert A \Vert^3 \dist(x^k,X^*)^2 + c\mu_k\dist(x^k,X^*) + \Vert R_k(\hat x^k) \Vert \\
&\leq \left(\frac{c^2L}{2}+ a c L_{\psi}\Vert A \Vert^3 +c\overline\nu (2+L_g)^\delta\right)\dist(x^k,X^*)^{1+\delta} + \theta \Vert r(x^k) \Vert^{1+\tau} \\
&\leq \left(\frac{c^2L}{2}+ a c L_{\psi}\Vert A \Vert^3 +c\overline\nu (2+L_g)^\delta + \theta (2+L_g)^{1+\tau}\right)\dist(x^k,X^*)^{1+\delta},   
\end{align*}
where we used Lemma \ref{lemma_alg_prop_K}\ref{eq_lemma_alg_prop_K_mu}, Lemma \ref{lemma_res_ineq}, $\dist(x^k,X^*) \leq 1$ and the inexactness criterion \eqref{eq_inexactness} in the second, and Lemma \ref{lemma_res_ineq} as well as $\tau \geq \delta$ in the third inequality. From 
Assumption~\ref{ass_luo_tseng_errbound_cvx} and the previous inequality, 
we then obtain
\begin{align*}
\dist(\hat x^k,X^*) &\leq \frac{1}{\beta}\Vert r(\hat x^k) \Vert^q \leq \widetilde c \dist(x^k,X^*)^{(1+\delta)q},
\end{align*}
and this completes the proof.
\end{proof}

\noindent
We finally present the main local rate-of-convergence result.

\begin{theorem}
Suppose that Assumption~\ref{assumption_loc_prox_cvx} holds. Then $\{x^k\}$ converges to $x^*$ and $\{\Vert r(x^k) \Vert\}$ converges to 0 at the rate of $\rho:=\min\{1+\tau,\delta+q\}>1$. 
\label{theorem_sup_conv}
\end{theorem}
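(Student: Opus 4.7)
The strategy is to produce a tail index $\tilde k \in \mathcal K$ from which on every iteration belongs to $\mathcal K$, is highly successful, and contracts the residual at the claimed rate $\rho = \min\{1+\tau,\delta+q\}$; note that $\rho > 1$ since $\tau \geq \delta > 0$ gives $1+\tau > 1$ and Assumption~\ref{ass_luo_tseng_errbound_cvx} gives $q > 1-\delta$, so $\delta + q > 1$. Combining Assumption~\ref{ass_existence_cvx} with Lemma~\ref{lemma_equiv} and Theorem~\ref{global_convergence_cvx} shows that $\mathcal K$ is infinite and that $\{x^k\}_{\mathcal K}$ admits $x^*$ as an accumulation point, so I may select $\tilde k \in \mathcal K$ with $x^{\tilde k}$ in a ball $B_{\varepsilon_3}(x^*) \subseteq B_{\varepsilon_2}(x^*)$ (with $\varepsilon_3 > 0$ to be fixed below) and $\Vert r(x^{\tilde k})\Vert$ small enough that three thresholds are met at $k = \tilde k$: (i) $\hat c\,\Vert r(x^k)\Vert^{\rho-1} \leq \eta$; (ii) the first unsuccessful-step criterion in line~6 of Algorithm~\ref{algo_proxRegNewton_cvx} fails; (iii) $\rho_k > c_2$. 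For (ii) I combine $\text{pred}_k \geq \tfrac{\mu_k}{2}\Vert d^k\Vert^2$ from \eqref{eq_pred} with the lower bound $\Vert d^k\Vert \geq \tfrac{1-\theta}{1+\Vert G_k\Vert}\Vert r(x^k)\Vert$ of Lemma~\ref{lemma_rk_dk}; with $\Vert G_k\Vert$ locally bounded (via Lemma~\ref{lemma_Lambda} and \eqref{eq_Hess_loc_bd}) and $\mu_k \geq \nu_{min}\Vert r(x^k)\Vert^\delta$, failure of the criterion reduces to an inequality of the form $\Vert r(x^k)\Vert^{\kappa-1-\delta} \leq C$ for a fixed constant $C > 0$, which holds for small residuals because $\kappa > 1+\delta$. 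For (iii) a second-order Taylor expansion combined with \eqref{eq_locLip_hess_f} yields $|F(\hat x^k)-q_k(\hat x^k)| \leq \tfrac{L}{2}\Vert d^k\Vert^3$, whence $|\rho_k - 1| \leq L\Vert d^k\Vert/\mu_k$; Lemma~\ref{lemma_d^k_ineq_cvx} with Assumption~\ref{ass_luo_tseng_errbound_cvx} gives $\Vert d^k\Vert \leq (c/\beta)\Vert r(x^k)\Vert^q$ while $\mu_k \geq \nu_{min}\Vert r(x^k)\Vert^\delta$, so $|\rho_k - 1|$ is bounded by $(Lc)/(\beta\nu_{min})\cdot \Vert r(x^k)\Vert^{q-\delta} \to 0$ because $q > \delta$.

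The inductive claim, for each $k \geq \tilde k$, is that $k \in \mathcal K$, $x^k \in B_{\varepsilon_2}(x^*)$, iteration $k$ is highly successful, and $\Vert r(x^{k+1})\Vert \leq \hat c\,\Vert r(x^k)\Vert^\rho$. High success at $k$ forces $x^{k+1} = \hat x^k$, and Lemma~\ref{lemma_superlinear} supplies the residual bound. Since $\overline r_k = \Vert r(x^k)\Vert$ by Lemma~\ref{lemma_alg_prop_K}(b) and (i) propagates along the decreasing residuals, I obtain $\Vert r(x^{k+1})\Vert \leq \eta\,\overline r_k$, hence $k+1 \in \mathcal K$ with $\overline r_{k+1} = \Vert r(x^{k+1})\Vert$. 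Because $\Vert r(x^{k+1})\Vert \leq \Vert r(x^k)\Vert$, conditions (ii) and (iii) persist at $k+1$ as well, so iteration $k+1$ is again highly successful. For the ball condition, Lemma~\ref{lemma_d^k_ineq_cvx} and Assumption~\ref{ass_luo_tseng_errbound_cvx} give
\[
\Vert x^{k+1}-x^*\Vert \leq \Vert x^{\tilde k}-x^*\Vert + \frac{c}{\beta}\sum_{j=\tilde k}^{k}\Vert r(x^j)\Vert^q,
\]
and raising $\Vert r(x^{j+1})\Vert \leq \hat c\,\Vert r(x^j)\Vert^\rho$ to the $q$-th power yields $\Vert r(x^{j+1})\Vert^q \leq \tfrac{1}{2}\Vert r(x^j)\Vert^q$ provided $\hat c^q\Vert r(x^{\tilde k})\Vert^{(\rho-1)q} \leq \tfrac{1}{2}$; the tail is then dominated by $2\Vert r(x^{\tilde k})\Vert^q$, and a sufficiently small $\varepsilon_3$ together with a sufficiently small $\Vert r(x^{\tilde k})\Vert$ keeps $x^{k+1}$ inside $B_{\varepsilon_2}(x^*)$, closing the induction.

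The same summability shows $\sum_{k \geq \tilde k}\Vert x^{k+1}-x^k\Vert < \infty$, so $\{x^k\}$ is Cauchy and its (unique) limit must coincide with the accumulation point $x^*$. The recursion $\Vert r(x^{k+1})\Vert \leq \hat c\,\Vert r(x^k)\Vert^\rho$, valid for all $k \geq \tilde k$, is precisely the claimed $Q$-rate $\rho > 1$. The principal obstacle is the inductive propagation: three independent smallness conditions on $\Vert r(x^k)\Vert$ must be simultaneously enforced and preserved, while the iterate has to stay within $B_{\varepsilon_2}(x^*)$ so that Lemma~\ref{lemma_superlinear} keeps applying. Once the initial $\tilde k$ is chosen with sufficient care, the Cauchy property and the superlinear rate follow immediately.
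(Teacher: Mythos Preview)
Your proposal is correct and follows essentially the same inductive scheme as the paper: pick a late index in $\mathcal K$ close to $x^*$, show by induction that every subsequent iteration stays in $\mathcal K$, remains in a small ball around $x^*$, is (at least) successful, and inherits the residual contraction from Lemma~\ref{lemma_superlinear}; then sum the step lengths via Lemma~\ref{lemma_d^k_ineq_cvx} and the error bound to get a Cauchy sequence with limit $x^*$. The only noteworthy deviations are cosmetic: you establish $\rho_k > c_2$ (highly successful) via the bound $|\rho_k-1|\leq L\Vert d^k\Vert/\mu_k$ obtained from a second-order Taylor remainder, whereas the paper shows the weaker $\rho_k>c_1$ using Lemma~\ref{lemma_ared_pred_ineq_cvx}; and you control the tail $\sum_j\Vert r(x^j)\Vert^q$ by the $\rho$-rate directly (getting a ratio $1/2$), whereas the paper uses the $\eta$-contraction (getting ratio $\eta^q$). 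Both routes lead to the same conclusion with the same ingredients.
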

\begin{proof}
We define the constants
\[
\varepsilon_3 := \frac{1}{2+L_g}\left(\frac{\eta}{\hat c}\right)^{\frac{1}{\rho-1}}, \; \varepsilon_4 := \left(\frac{(1-c_1)\nu_{min}\beta}{cL(2+L_g)^{q-\delta}}\right)^{\frac{1}{q-\delta}}
\]
\[
\varepsilon_5 := \frac{1}{2+L_g}\left(\frac{1+L_g+\Vert A \Vert^2+\overline\nu}{\nu_{min}}\right)^{-\frac{1}{\kappa-1-\delta}}
\]
\[
\varepsilon_6 := \min\{\varepsilon_2,\varepsilon_3,\varepsilon_4,\varepsilon_5\}, \; \varepsilon_7 := \left(\varepsilon_6 / \left(1+\frac{c(2+L_g)^q}{\beta(1-\eta^q)}\right)\right)^{\frac{1}{\min(1,q)}}. 
\]
Assumption~\ref{ass_existence_cvx} ensures the existence of a subset $\mathcal L \subset \mathcal K$ with $\{x^k\}_{\mathcal L} \to x^*$. Consider some $k_0 \in \mathcal L$ with $x^{k_0} \in B_{\varepsilon_7}(x^*) \subset B_{\varepsilon_6}(x^*)$. We want to show that for all $k \geq k_0$, 
it holds that
\begin{subequations}
\label{eq_Lemma_l}
\begin{equation}
k \in \mathcal K, 
\label{eq_Lemma_L_1a}
\end{equation}
\begin{equation}
x^k \in B_{\varepsilon_6}(x^*).
\label{eq_Lemma_L_1b}
\end{equation}
\end{subequations}
For $k_0$ the above properties hold. Suppose now that \eqref{eq_Lemma_l} is satisfied for $k_0,...,k$ with some $k \geq k_0$. Using 
Lemma~\ref{lemma_ared_pred_ineq_cvx}, we then get
\begin{align*}
\text{ared}_k-c_3 \text{pred}_k &\geq \frac{1}{2}\left((1-c_3)\mu_k - \Vert \nabla^2f(\xi^k)-\nabla^2 f(x^k) \Vert\right)\Vert d^k \Vert^2 \\
&\geq \frac{1}{2}\left((1-c_3)\nu_{min}\Vert r(x^k) \Vert^{\delta-q} \Vert r(x^k) \Vert^q - cL \dist(x^k,X^*) \right)\Vert d^k \Vert^2 \\
&\geq \frac{1}{2}\left((1-c_3)\nu_{min}\beta\Vert r(x^k) \Vert^{\delta-q} - cL \right)\dist(x^k,X^*)\Vert d^k \Vert^2 \\
&\geq \frac{1}{2}\left((1-c_3)\nu_{min}\beta (2+L_g)^{\delta-q}\varepsilon_6^{\delta-q} - cL \right)\dist(x^k,X^*)\Vert d^k \Vert^2 \\
&\geq 0
\end{align*}
for some $c_3 \in (c_1,1)$, where the second inequality follows from 
Lemma~\ref{lemma_alg_prop_K}\ref{eq_lemma_alg_prop_K_eq}, \eqref{eq_locLip_hess_f} and Lemma~\ref{lemma_d^k_ineq_cvx}, the third from Assumption~\ref{ass_luo_tseng_errbound_cvx}, the fourth from 
Lemma~\ref{lemma_res_ineq} and \eqref{eq_Lemma_L_1b} together with $q > \delta$ and the fifth from the definition of $\varepsilon_6 \leq 
\varepsilon_4$. It follows that $\rho_k > c_1$. We also get
\begin{align*}
\frac{\Vert r(x^k) \Vert^{\kappa}}{\mu_k\Vert d^k \Vert} &= \Vert r(x^k) \Vert^{\kappa-1} \frac{\Vert r(x^k) \Vert}{\mu_k \Vert d^k \Vert} \leq \Vert r(x^k) \Vert^{\kappa-1} \frac{1+\Vert G_k \Vert}{(1-\theta)\mu_k} \leq \Vert r(x^k) \Vert^{\kappa-1-\delta} \frac{1+\Vert H_k \Vert+\mu_k}{(1-\theta)\nu_{min}} \\
&\leq (2+L_g)^{\kappa-1-\delta}\frac{1+L_g+\Vert A \Vert^2+\overline\nu}{(1-\theta)\nu_{min}} \varepsilon_6^{\kappa-1-\delta}\\
&\leq \frac{1}{1-\theta},
\end{align*}
where the first inequality follows from Lemma \ref{lemma_rk_dk}, the second from Lemma \ref{lemma_alg_prop_K}\ref{eq_lemma_alg_prop_K_eq} and $\nu_k \geq \nu_{min}$, the third from Lemma \ref{lemma_res_ineq}, \eqref{eq_Hess_loc_bd}, Lemma \ref{lemma_alg_prop_K}\ref{eq_lemma_alg_prop_K_mu}, $\Lambda_k \leq 1$ and $\Vert r(x^k) \Vert \leq 1$, and the fourth from the definition of $\varepsilon_6 \leq \varepsilon_5$. Together with \eqref{eq_pred} it then follows that
\[
\text{pred}_k \geq \frac{\mu_k}{2} \Vert d^k \Vert^2 \geq \frac{1-\theta}{2} \Vert d^k \Vert \Vert r(x^k) \Vert^\kappa > p_{min} (1-\theta) \Vert d^k \Vert \Vert r(x^k) \Vert^\kappa.
\]
Therefore, iteration $k$ is successful or highly successful. Furthermore it holds that 
\begin{align*}
\Vert r(x^{k+1}) \Vert &= \Vert r(\hat x^k) \Vert \leq \hat c\Vert r(x^k) \Vert^\rho = \hat c\Vert r(x^k) \Vert^{\rho-1} \Vert r(x^k) \Vert \\
&\leq \hat c \left( (2+L_g) \dist(x^k,X^*)\right)^{\rho-1} \Vert r(x^k) \Vert \\
&\leq \hat c (2+L_g)^{\rho-1}\left(\frac{1}{2+L_g}\left(\frac{\eta}{\hat c}\right)^{\frac{1}{\rho-1}}\right)^{\rho-1} \Vert r(x^k) \Vert = \eta \Vert r(x^k) \Vert = \eta\overline r_k,
\end{align*}
where we used \eqref{eq_lemma_superlinear_1} in the first, 
Lemma~\ref{lemma_res_ineq} and $\rho>1$ in the second, and the definition of $\varepsilon_6 \leq \varepsilon_3$ in the third inequality. In the last equality we used Lemma \ref{lemma_alg_prop_K}\ref{eq_lemma_alg_prop_K_eq}. It follows that $k+1 \in \mathcal K$. For all $j=k_0,...,k+1$ it holds that $j \in \mathcal K$ and thus
\begin{align}
\label{eq_lemma_L_2}
\Vert r(x^j) \Vert \leq \eta \overline r_{j-1} = \eta \Vert r(x^{j-1}) \Vert \leq ... \leq \eta^{j-k_0}\overline r_{k_0} = \eta^{j-k_0}\Vert r(x^{k_0}) \Vert,
\end{align}
by using Lemma~\ref{lemma_alg_prop_K}\ref{eq_lemma_alg_prop_K_dec} and 
Lemma~\ref{lemma_alg_prop_K}\ref{eq_lemma_alg_prop_K_eq} repeatedly. 
Moreover, it holds that $x^j = \hat x^{j-1} = x^{j-1}+d^{j-1}$ as all iterations $k_0,...,k$ are successful or highly successful by definition of $\mathcal K$. Thus we get
\begin{align}
\label{eq_theorem_sup_conv_1}
\begin{split}
\Vert x^{k+1}-x^{k_0} \Vert &= \sum_{j=k_0}^k \Vert d^j \Vert \leq c \sum_{j=k_0}^k \dist(x^{j},X^*) \leq \frac{c}{\beta} \sum_{j=k_0}^k \Vert r(x^j) \Vert^q \leq \frac{c}{\beta}\Vert r(x^{k_0}) \Vert^q \sum_{j=k_0}^k (\eta^q)^{j-k_0} \\
&\leq \frac{c}{\beta}\Vert r(x^{k_0}) \Vert^q \sum_{j=0}^\infty (\eta^q)^j = \frac{c}{\beta(1-\eta^q)}\Vert r(x^{k_0}) \Vert^q \leq \frac{c(2+L_g)^q}{\beta(1-\eta^q)}\Vert x^{k_0}-x^* \Vert^q,
\end{split}
\end{align}
where we used Lemma~\ref{lemma_d^k_ineq_cvx} in the first, 
Assumption~\ref{ass_luo_tseng_errbound_cvx} in the second, \eqref{eq_lemma_L_2} in the third and Lemma~\ref{lemma_res_ineq} in the last inequality. This implies
\begin{align*}
\Vert x^{k+1}-x^* \Vert &\leq \Vert x^{k+1}-x^{k_0} \Vert + \Vert x^{k_0}-x^* \Vert \leq \frac{c(2+L_g)^q}{\beta(1-\eta^q)}{\varepsilon_7}^q+\varepsilon_7 \\
&\leq \left(\frac{c(2+L_g)^q}{\beta(1-\eta^q)}+1\right)\varepsilon_7^{\min(1,q)}=\varepsilon_6.
\end{align*}
By induction, it follows that \eqref{eq_Lemma_l} holds for all $k \geq k_0$. For an iteration $k \geq k_0$ define $l_k$ as the iteration which satisfies the following three properties: 
\begin{equation}
l_k \leq k, \, l_k \in \mathcal L \text{ and } j \notin \mathcal L \text{ for } l_k<j\leq k.
\end{equation}
In words, $l_k$ is the last iteration belonging to $\mathcal L$ before iteration $k$. By construction it follows that $l_k \to \infty$ if $k \to \infty$ and therefore $\{x^{l_k}\} \to x^*$. Similar to \eqref{eq_theorem_sup_conv_1} it follows for $k \geq l_k \geq k_0$ that
\[
\Vert x^k-x^* \Vert \leq \Vert x^k-x^{l_k} \Vert + \Vert x^{l_k}-x^* \Vert \leq \frac{c(2+L_g)^q}{\beta(1-\eta^q)} \Vert x^{l_k}-x^* \Vert^q+\Vert x^{l_k}-x^* \Vert.
\]
Hence, $\{x^k\}$ converges to $x^*$. Now it immediately follows from \eqref{eq_lemma_superlinear_1} that $\{\Vert r(x^k) \Vert\}$ converges to $0$ at the rate of $\rho>1$.
\end{proof}

\begin{corollary}
Suppose that Assumption~\ref{assumption_loc_prox_cvx} holds with $q > \frac{1}{1+\delta}$. Then $\{x^k\}$ converges to $x^*$, $\{\Vert r(x^k) \Vert\}$ converges to $0$ at the rate of $\rho>1$ and $\{\dist(x^k,X^*)\}$ converges to $0$ at the rate of $(1+\delta)q>1$.
\end{corollary}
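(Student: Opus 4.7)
The plan is to apply Theorem~\ref{theorem_sup_conv} directly for the first two assertions and then extract the rate for $\{\dist(x^k,X^*)\}$ from the second estimate in Lemma~\ref{lemma_superlinear}. The extra hypothesis $q > \tfrac{1}{1+\delta}$ does not affect the first two claims, which follow verbatim from Theorem~\ref{theorem_sup_conv} (recall that Assumption~\ref{ass_luo_tseng_errbound_cvx} already enforces $q > \max\{\delta,1-\delta\}$, so $\rho = \min\{1+\tau,\delta+q\} > 1$ is automatic); the sole role of the new condition is to guarantee $(1+\delta)q > 1$, which makes the third claim a genuine superlinear statement.

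For the third assertion, I would first recall from the inductive argument inside the proof of Theorem~\ref{theorem_sup_conv} that there exists an index $k_0 \in \mathcal K$ such that, for every $k \geq k_0$, one has $k \in \mathcal K$ and $x^k \in B_{\varepsilon_6}(x^*) \subseteq B_{\varepsilon_2}(x^*)$, and moreover iteration $k$ is at least successful (since $\rho_k > c_1$ was shown to hold and the $\text{pred}_k$-test was verified). In particular, $x^{k+1} = \hat x^k$ for every such $k$, which places us exactly in the setting of Lemma~\ref{lemma_superlinear}. Its second conclusion \eqref{eq_lemma_superlinear_2} then yields
\[
\dist(x^{k+1}, X^*) \;=\; \dist(\hat x^k, X^*) \;\leq\; \widetilde c \, \dist(x^k, X^*)^{(1+\delta)q}
\]
for all $k \geq k_0$. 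Combined with $\dist(x^k,X^*) \to 0$, which follows from $x^k \to x^*$ already established in Theorem~\ref{theorem_sup_conv}, this recursion is precisely the claimed superlinear convergence of order $(1+\delta)q > 1$.

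There is no real obstacle: the corollary is essentially bookkeeping on top of Theorem~\ref{theorem_sup_conv} and Lemma~\ref{lemma_superlinear}. The only step worth explicitly naming is the identification $x^{k+1} = \hat x^k$ for all $k \geq k_0$, which is embedded in (but not stated as a separate consequence of) the induction in the proof of Theorem~\ref{theorem_sup_conv}, and which is exactly what is needed to convert the bound \eqref{eq_lemma_superlinear_2} on $\hat x^k$ into a recursion for the iterate sequence itself.
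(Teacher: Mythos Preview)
Your proposal is correct and follows essentially the same approach as the paper: the paper's proof simply notes that $\tfrac{1}{1+\delta} > 1-\delta$ (so the extra hypothesis is consistent with Assumption~\ref{ass_luo_tseng_errbound_cvx}) and then states that ``the result follows directly from Theorem~\ref{theorem_sup_conv} and \eqref{eq_lemma_superlinear_2}.'' You have merely spelled out the details the paper suppresses---in particular the identification $x^{k+1}=\hat x^k$ for $k\geq k_0$, which is exactly what is needed to turn \eqref{eq_lemma_superlinear_2} into a recursion for the iterates.
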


\begin{proof}
It holds that $\frac{1}{1+\delta} > \frac{1-\delta^2}{1+\delta} = \frac{(1+\delta)(1-\delta)}{1+\delta} = 1-\delta$, i.e. the assumption here is stronger than in Assumption~\ref{ass_luo_tseng_errbound_cvx}. The result follows directly from Theorem \ref{theorem_sup_conv} and \eqref{eq_lemma_superlinear_2}.
\end{proof}

\section{Numerical Results}
In this section, we present the numerical results of Algorithm \ref{algo_proxRegNewton_cvx} (denoted as IRPNM-reg) for various instances of Problem \ref{problem}. We compare these results with the outcomes of the inexact regularized proximal Newton method using line-search (IRPNM-ls) proposed in \cite{liu_pan_wu_yang_2024}, as well as a modern FISTA-type method (AC-FISTA) from \cite{liang_monteiro_2023}. \\
We start by considering the convex logistic regression problem with $l_1$-regularizer (Section 6.1) and group regularizer (Section 6.2). Subsequently, we investigate three non-convex problem classes introduced in \cite{liu_pan_wu_yang_2024}: $l_1$-regularized Student's $t$-regression (Section 6.3), Group regularized Student's $t$-regression (Section 6.4), and Restoration of a blurred image (Section 6.5). \\
For all tests, we fix the parameters for IRPNM-reg as follows: $c_1 = 10^{-4}$, $c_2 = 0.9$, $\sigma_1 = 0.5$, $\sigma_2 = 4$, $\eta = 0.9999$, $\theta = 0.9999$, $\alpha=0.99$, $a=1$, $\nu_{\min} = 10^{-8}$, $\nu_0 = \min \left(\frac{10^{-2}}{\max(1,\Vert r(x^0) \Vert)},10^{-4} \right)$, $\overline{\nu}  = 100$, $\delta = 0.45$, $\tau \geq \delta$, $p_{\min} = 10^{-8}$, and $\kappa = 2$. For IRPNM-ls and AC-FISTA, we adopt the recommended parameters from their respective papers. \\
The tests are conducted using Matlab R2022b on a 64-bit Linux system with an Intel(R) Core(TM) i5-3470 CPU @ 3.20GHz and 16 GB RAM. \\
Since IRPNM-reg solves exactly the same subproblems as IRPNM-ls, we employ the efficient strategy developed in \cite{liu_pan_wu_yang_2024}. This strategy solves the dual of an equivalent reformulation of \eqref{reg_proximal_newton_cvx} using an augmented Lagrangian method. The semismooth system of equations arising from the augmented Lagrangian method is solved using the semismooth Newton method. Notably, this strategy is tailored to address problems where $\psi$ is a separable function, a characteristic shared by many applications, including those under consideration here. For more comprehensive details on the subproblem solver, please refer to \cite[Section 5.1]{liu_pan_wu_yang_2024}. \\
We terminate each of the tested methods once the current iterate $x^k$ satisfies $\Vert r(x^k) \Vert \leq \texttt{tol}$. Here, $\texttt{tol}$ is chosen independently for each problem class and further distinguished between the two second order methods and AC-FISTA. 

\subsection{$l_1$-regularized Logistic Regression}
First we explore the logistic regression problem defined as
\begin{align}
\label{eq_log_reg_l1}
\min_{y, v} \frac{1}{m} \sum_{i=1}^m \log \left( 1 + \exp\left( -b_i(a_i^\top y + v)\right)\right) + \lambda \lVert y \rVert_1.
\end{align}
In this context, $a_i \in \mathbb{R}^n$ denotes feature vectors, $b_i \in \{-1,1\}$ represents corresponding labels for $i=1,...,m$, and we have $\lambda > 0$, $y \in \mathbb{R}^n$, and $v \in \mathbb{R}$. In standard instances of this problem, $m \gg n$. The logistic regression problem aligns with the general form of \eqref{problem}, where $\psi \colon \mathbb{R}^{n+1} \to \mathbb{R}$ is defined as

\[
\psi(u) := \frac{1}{m} \sum_{i=0}^m \log(1+\exp(-u_i)), \quad u := (y^\top, v)^\top,
\]

where the $i$-th row of the matrix $A \in \mathbb{R}^{m \times (n+1)}$ takes the form of $(b_ia_i^\top, b_i)$, and $b=0 \in \mathbb{R}^m$. The regularization function $\varphi \colon \mathbb{R}^{n+1} \to \mathbb{R}$ is given by $\varphi(u) := \lambda \lVert y \rVert_1$.

Following the methodology outlined in \cite{boyd_2011} and , we create test problems using $n=10^4$ feature vectors and $m=10^6$ training sets. Each $a_i$ has approximately $s \in \{10,100\}$ nonzero entries, independently sampled from a standard normal distribution. We choose $y^{\text{true}} \in \mathbb{R}^n$ with $10s$ non-zero entries and $v^{\text{true}} \in \mathbb{R}$, independently sampled from a standard normal distribution. Labels $b_i$ are determined by

\[
b_i = \text{sign}\left( a_i^\top y^{\text{true}} + v^{\text{true}} + v_i \right),
\]

where $v_i \in \mathbb{R}$, $i=1,...,m$, are generated independently from a normal distribution with variance $0.1$. Similar to \cite{kanzow_lechner_2021}, the regularization parameter $\lambda$ takes the form $c_{\lambda} \lambda_{\text{max}}$, with $c_{\lambda} \in \{1,0.1,0.01\}$, and

\[
\lambda_{\text{max}} = \frac{1}{m} \left\lVert \frac{m_-}{m} \sum_{i \colon b_i=1} a_i + \frac{m_+}{m} \sum_{i \colon b_i = -1} a_i \right\rVert,
\]

representing the smallest value such that $y^* = (0,v^*)$ is a solution of \eqref{eq_log_reg_l1}. Here, $m_+$ and $m_-$ represent the counts of indices where $b_i$ is equal to $+1$ or $-1$, respectively. The selection of this value is motivated in \cite{boyd_2007}. For each method, we select the starting point as $x^0=0$ and carry out 10 independent trials - that is, with ten sets of randomly generated data - for every combination of parameters $s$ and $c_{\lambda}$. Tables 1 and 2 present the averaged number of (outer) iterations, objective values, residuals and running times for the two second order methods and AC-FISTA, respectively. 
\begin{table}[H]
\centering
\begin{tabular}{c|c|cccc|cccc}
 &  & \multicolumn{4}{c|}{IRPNM-reg} & \multicolumn{4}{c}{IRPNM-ls} \\
\hline
$c_\lambda$ & s & iter & $F(x)$ & $\Vert r(x) \Vert$ & time & iter & $F(x)$ & $\Vert r(x) \Vert$ & time \\
\hline
1 & 10 & 63.0 & 0.0904 & 7.72e-06 & 39.8 & 63.0 & 0.0904 & 7.73e-06 & 34.8  \\
 & 100 & 4.4 & 0.4518 & 3.48e-06 & 18.1 & 4.4 & 0.4518 & 3.49e-06 & 17.8  \\
\hline
0.1 & 10 & 49.6 & 0.0785 & 9.98e-06 & 149.2 & 32.0 & 0.0785 & 9.99e-06 & 116.5 \\
 & 100 & 7.9 & 0.2434 & 9.62e-06 & 252.6 & 8.2 & 0.2434 & 9.63e-06 & 262.2 \\
\hline
0.01 & 10 & 117.3 & 0.0727 & 1.00e-05 & 227.4 & 87.3 & 0.0727 & 1.00e-05 & 193.2\\
 & 100 & 13.6 & 0.0844 & 9.92e-06 & 734.7 & 16.6 & 0.0844 & 9.98e-06 & 1038.2 
\end{tabular}
\caption{Averaged results of IRPNM-reg and IRPNM-ls for 10 independent trials with tolerance $\texttt{tol}=10^{-5}$.}
\end{table}
We observe that IRPNM-reg and IRPNM-ls produce identical objective values. Both methods exhibit improved performance for larger values of $c_\lambda$. Additionally, the algorithms perform better with sparser data ($s=10$) for $c_\lambda \in \{0.1,0.01\}$, but worse for $c_\lambda = 1$. The performance of the methods is comparable, with IRPNM-ls demonstrating slightly superior results in the case of $s=10$, while IRPNM-reg performs better when $s=100$ and $c_\lambda = 0.01$.
\begin{table}[H]
\centering
\begin{tabular}{c|c|cccc}
 &  & \multicolumn{4}{c}{AC-FISTA} \\
\hline
$c_\lambda$ & s & iter & $F(x)$ & $\Vert r(x) \Vert$ & time \\
\hline
1 & 10 & 21.2 & 0.0904 & 6.36e-06 & 13.4 \\
 & 100 & 10.0 & 0.4518 & 6.14e-06 & 50.9 \\
\hline
0.1 & 10 & 210.2 & 0.0784 & 9.73e-06 & 129.5 \\
 & 100 & 100.9 & 0.2434 & 8.02e-06 & 449.0 \\
\hline
0.01 & 10 & 272.4 & 0.0727 & 9.84e-06 & 162.9 \\
 & 100 & 179.8 & 0.0844 & 8.80e-06 & 759.7   
\end{tabular}
\caption{Averaged results of AC-FISTA for 10 independent trials with tolerance $\texttt{tol}=10^{-5}$.}
\end{table}
AC-FISTA produces nearly identical objective values as the second-order methods. It generally outperforms the second-order methods for $s=10$ but performs worse for $s=100$, with some exceptions. Notably, in the case of $s=10$ and $c_\lambda=0.1$, IRPNM-ls is slightly faster than AC-FISTA. Conversely, for $s=100$ and $c_\lambda=0.01$, AC-FISTA significantly outperforms IRPNM-ls, nearly matching the runtime of IRPNM-reg.
\subsection{Group regularized Logistic Regression}
We consider the group regularized logistic regression problem, given by 
\begin{align*}
\min_{y, v} \frac{1}{m} \sum_{i=1}^m \log \left( 1 + \exp\left( -b_i(a_i^\top y + v)\right)\right) + \lambda \sum_{i=1}^l \Vert x_{J_i} \Vert_2,
\end{align*}
where the data $a_i \in \R^n$, $b_i \in \{-1,1\}$ for $i=1,...,m$ and $v \in \R$ follows the same generation process as in section 6.2 (with $s=10$). The index sets $J_1,...,J_l$ form a partition of $\{1,...,n\}$, i.e. they satisfy $J_i \cap J_j = \emptyset$ for $i \neq j$ and $\cup_{i=1}^l J_i = \{1,...,n\}$. We organize the $n=10^4$ in two different configurations: $l=1000$ groups of $100$ variables and $l=100$ groups of $1000$ variables, while consistently preserving a sequential group structure. The regularization parameter $\lambda$ mirrors the one in section 6.2 with $c_\lambda \in \{1,0.1,0.01\}$, and the initial value is set as $x^0 = 0$. Similar to the previous test problem, we conduct 10 independent trials for each value of $c_\lambda$. Tables 3 and 4 present the averaged number of (outer) iterations, objective values, residuals and running times for the two second order methods and AC-FISTA, respectively. 
\begin{table}[H]
\centering
\begin{tabular}{c|c|cccc|cccc}
 & & \multicolumn{4}{c|}{IRPNM-reg} & \multicolumn{4}{c}{IRPNM-ls} \\
\hline
$l$ & $c_\lambda$ & iter & $F(x)$ & $\Vert r(x) \Vert$ & time & iter & $F(x)$ & $\Vert r(x) \Vert$ & time \\
\hline
1000 & 1 & 7.3 & 0.3049 & 8.22e-06 & 14.7 & 7.4 & 0.3049 & 8.57e-06 & 15.3 \\
& 0.1 & 11.6 & 0.2725 & 9.93e-06 & 98.8 & 9.6 & 0.2725 & 9.96e-06 & 79.1 \\
& 0.01 & 23.2 & 0.2574 & 9.98e-06 & 184.8 & 22.2 & 0.2574 & 1.00e-05 & 177.0  \\
\hline 
100 & 1 & 13.8 & 0.3039 & 9.74e-06 & 63.7 & 7.1 & 0.3039 & 9.80e-06 & 36.8 \\
& 0.1 & 10.0 & 0.2690 & 9.92e-06 & 98.6 & 9.9 & 0.2690 & 9.98e-06 & 92.1 \\
& 0.01 & 25.0 & 0.2560 & 9.99e-06 & 255.9 & 24.3 & 0.2560 & 1.00e-05 & 194.7  
\end{tabular}
\caption{Averaged results of IRPNM-reg and IRPNM-ls for 10 independent trials with tolerance $\texttt{tol}=10^{-5}$.}
\end{table}
Both methods yield the same objective values in essentially the same run times. IRPNM-ls is slightly faster than IRPNM-reg across all test instances.
\begin{table}[H]
\centering
\begin{tabular}{c|c|cccc}
 & & \multicolumn{4}{c}{AC-FISTA} \\
\hline
$l$ & $c_\lambda$ & iter & $F(x)$ & $\Vert r(x) \Vert$ & time \\
\hline
1000 & 1 & 44.3 & 0.3049 & 8.23e-05 & 28.1 \\
& 0.1 & 134.3 & 0.2726 & 9.23e-05 & 81.4 \\
& 0.01 & 209.8 & 0.2582 & 9.84e-05 & 122.2 \\ 
\hline   
100 & 1 & 151.5 & 0.3039 & 9.44e-06 & 92.6 \\
& 0.1 & 307.9 & 0.2690 & 9.76e-06 & 251.8 \\
& 0.01 & 607.6 & 0.2560 & 9.90e-06 & 343.9 \\ 
\end{tabular}
\caption{Averaged results of AC-FISTA for 10 independent trials with tolerance $\texttt{tol}=10^{-5}$.}
\end{table}
AC-FISTA achieves the same objective values as the second-order methods. When $l=100$, AC-FISTA underperforms compared to the second-order methods. For $l=1000$, AC-FISTA exhibits inferior performance for large $c_\lambda$ values but superior performance for smaller $c_\lambda$ values.
\subsection{$l_1$-regularized Student's $t$-regression}
We consider the Student's $t$-regression problem with $l_1$-regularizer, given by
\begin{align*}
\min_{x} \sum_{i=1}^m \log(1+(Ax-b)_i/\nu) + \lambda \Vert x \Vert_1,
\end{align*}
where $A \in \R^{m \times n}$, $b \in \R^m$, $\nu > 0$ and $\lambda > 0$. The test examples are randomly generated following the same procedure as in \cite{becker_bobin_candes_2011, liu_pan_wu_yang_2024, milzarek_ulbrich_2014}. The matrix $A$ is formed by taking $m = n/8$ random cosine measurements, i.e. $Ax = (\texttt{dct}(x))_J$, where $\texttt{dct}$ denotes the discrete cosine transform, and $J \subseteq \{1,...,n\}$ is an index set selected at random with $\vert J \vert = m$. A true sparse signal $x^{true}$ of length $n=512^2$ is created, featuring $s = \lfloor \frac{n}{40} \rfloor$ randomly selected non-zero entries, calculated as $x_i^{\text{true}} = \eta_1(i)10^{\frac{d \eta_2(i)}{20}}$, where $\eta_1(i) \in \{-1,1\}$ denotes a random sign and $\eta_2(i)$ is uniformly distributed in the interval $[0,1]$. The signal possesses a dynamic range of $d$ dB with $d \in \{20,40,60,80\}$. The vector $b$ is then obtained by summing $Ax^{\text{true}}$ and Student's $t$-noise with a degree of freedom of $4$, rescaled by $0.1$. \\
The regularization parameter is expressed as $\lambda = c_\lambda \Vert \nabla f(0) \Vert_\infty$, where $c_\lambda \in \{0.1,0.01\}$. For each combination of values $d$ and $c_\lambda$ we run the three solvers with $\nu = 0.25$ and $x^{init} = A^\top b$ over $10$ independent trials. Tables 5 and 6 present the averaged number of (outer) iterations, objective values, residuals and running times for IRPNM-reg and IRPNM-ls with $\texttt{tol} = 10^{-5}$, and AC-FISTA with $\texttt{tol} = 10^{-4}$, respectively.
\begin{table}[H]
\centering
\begin{tabular}{c|c|cccc|cccc}
&  & \multicolumn{4}{c|}{IRPNM-reg} & \multicolumn{4}{c}{IRPNM-ls} \\
\hline
$c_\lambda$ & $d$ & iter & $F(x)$ & $\Vert r(x) \Vert$ & time & iter & $F(x)$ & $\Vert r(x) \Vert$ & time \\
\hline
0.1 & 20 & 28.4 & 9532.5413 & 8.78e-06 & 13.5 & 24.2 & 9532.5413 & 8.92e-06 & 13.1 \\
 & 40 & 19.5 & 23812.8786 & 6.00e-06 & 32.0 & 17.2 & 23812.8749 & 6.75e-06 & 33.3 \\
 & 60 & 24.7 & 54228.0069 & 8.07e-06 & 88.1 & 23.8 & 54228.0069 & 6.85e-06 & 84.2 \\
 & 80 & 80.3 & 134779.2596 & 8.54e-06 & 281.5 & 109.7 & 134779.2596 & 8.03e-06 & 323.2 \\
\hline
0.01 & 20 & 11.8 & 1020.4271 & 7.09e-06 & 37.2 & 8.9 & 1020.4271 & 7.08e-06 & 37.0 \\
 & 40 & 15.5 & 2395.0693 & 7.90e-06 & 129.4 & 14.1 & 2395.0693 & 7.63e-06 & 122.4 \\
 & 60 & 12.4 & 5424.4039 & 7.33e-06 & 170.5 & 17.7 & 5424.4039 & 7.65e-06 & 261.9 \\
 & 80 & 16.3 & 13478.1029 & 6.17e-06 & 314.2 & 116.3 & 13478.1029 & 7.50e-06 & 1103.9 \\  
\end{tabular}
\caption{Averaged results of IRPNM-reg and IRPNM-ls for 10 independent trials with tolerance $\texttt{tol}=10^{-5}$.}
\end{table}
Both methods yield the same objective values except for the case $c_\lambda = 0.1$, $d = 40$, where IRPNM-ls yields (in average) a slightly smaller objective value. In most cases, the runtimes for the two methods are comparable. However, for $c_\lambda = 0.01$ and $d \in \{60,80\}$ IRPNM-reg performs better, requiring only a third of the runtime of IRPNM-ls for $d=80$. 
\begin{table}[H]
\centering
\begin{tabular}{c|c|cccc}
&  & \multicolumn{4}{c}{AC-FISTA} \\
\hline
$c_\lambda$ & $d$ & iter & $F(x)$ & $\Vert r(x) \Vert$ & time \\
\hline
0.1 & 20 & 507.5 & 9532.5413 & 9.71e-05 & 31.4 \\
 & 40 & 1041.5 & 23812.8749 & 9.84e-05 & 95.2 \\
 & 60 & 2238.9 & 54228.0069 & 9.90e-05 & 134.6 \\
 & 80 & 7240.7.5 & 134779.2596 & 9.95e-05 & 434.2 \\
\hline
0.01 & 20 & 1488.8 & 1020.4271 & 9.97e-05 & 98.0 \\
 & 40 & 2531.7 & 2395.0693 & 9.98e-05 & 162.8 \\
 & 60 & 5391.4 & 5424.4039 & 9.94e-05 & 327.9 \\
 & 80 & 20694.0 & 13478.1029 & 9.93e-05 & 1243.0 \\  
\end{tabular}
\caption{Averaged results of AC-FISTA for 10 independent trials with tolerance $\texttt{tol}=10^{-4}$.}
\end{table}
Note that here we chose $\texttt{tol} = 10^{-4}$ for AC-FISTA instead of $10^{-5}$. It solves all the problems and returns the same objective values. In all cases it takes longer to solve the problems with tolerance $10^{-5}$ than both second order methods with tolerance $10^{-6}$. However, in some cases (e.g. $c_\lambda=0.01$ and $d=80$), it does not perform much worse than IRPNM-ls.

\subsection{Group penalized Student's $t$-regression}
We consider the Student's $t$-regression problem with group regularizer, given by
\begin{align*}
\min_{x} \sum_{i=1}^m \log(1+(Ax-b)_i/\nu) + \lambda \sum_{i=1}^l \Vert x_{J_i} \Vert_2.
\end{align*}
This test problem is taken from \cite[Section 5.3]{liu_pan_wu_yang_2024}. A true group sparse signal $x^{true} \in \R^n$ of length $n = 512^2$ with $s$ nonzero groups is generated, whose indices are chosen randomly. Each nonzero entry of $x^{true}$ is calculated using the same formula as in section 6.3. The matrix $A \in \R^{m \times n}$ and the vector $b \in \R^m$ are also obtained in the same way as in section 6.3, with the only difference being the choice of degree of freedom $5$ for the Student's $t$-noise. \\
The regularization parameter is set as $\lambda = 0.1 \Vert \nabla f(0) \Vert$. For each combination of values $d \in \{60,80\}$ dB and non-zero groups $s=\{16,64,128\}$ we run the three solvers with $\nu = 0.2$ and $x^{init} = A^\top b$ over $10$ independent trials. Tables 7 and 8 present the averaged number of (outer) iterations, objective values, residuals and running times for IRPNM-reg and IRPNM-ls with $\texttt{tol} = 10^{-5}$, and AC-FISTA with $\texttt{tol} = 10^{-3}$, respectively.
\begin{table}[H]
\centering
\begin{tabular}{c|c|cccc|cccc}
&  & \multicolumn{4}{c|}{IRPNM-reg} & \multicolumn{4}{c}{IRPNM-ls} \\
\hline
$d$ & $s$ & iter & $F(x)$ & $\Vert r(x) \Vert$ & time & iter & $F(x)$ & $\Vert r(x) \Vert$ & time \\
\hline
 60 & 16 & 6.1 & 12711.8673 & 6.54e-06 & 16.79 & 9.0 & 12711.8673 & 8.44e-06 & 16.69 \\
  & 64 & 6.7 & 17852.9902 & 8.65e-06 & 19.53 & 12.0 & 17852.9902 & 8.08e-06 & 26.06 \\
  & 128 & 7.0 & 21670.1861 & 8.64e-06 & 20.16 & 14.9 & 21670.1861 & 9.13e-06 & 34.48 \\
\hline
 80 & 16 & 9.0 & 37037.7136 & 9.26e-06 & 38.30 & 54.8 & 37037.7137 & 9.67e-06 & 133.25 \\
  & 64 & 11.0 & 52741.5880 & 7.40e-06 & 49.86 & 91.7 & 52741.5881 & 9.77e-06 & 245.62 \\
  & 128 & 13.3 & 63451.7421 & 7.07e-06 & 61.90 & 128.2 & 63451.7421 & 9.40e-06 & 372.52 \\ 
\end{tabular}
\caption{Averaged results of IRPNM-reg and IRPNM-ls for 10 independent trials with tolerance $\texttt{tol}=10^{-5}$.}
\end{table}
Both methods produce - essentially - the same objective values. IRPNM-reg shows better performance than IRPNM-ls for $d=60$ and significantly better for $d=80$. \\
\begin{table}[H]
\centering
\begin{tabular}{c|c|cccc}
&  & \multicolumn{4}{c}{AC-FISTA} \\
\hline
$d$ & $s$ & iter & $F(x)$ & $\Vert r(x) \Vert$ & time \\
\hline
60 & 16 & 4204.3 & 12711.8731 & 9.91e-04 & 318.76 \\
 & 64 & 6282.8 & 17853.0157 & 1.00e-03 & 438.99 \\
 & 128 & 8936.6 & 21670.2322 & 1.00e-03 & 592.16 \\

\hline
80 & 16 & 20954.6 & 37037.9708 & 9.97e-04 & 1493.72 \\
 & 64 & 30273.6 & 52742.3811 & 9.93e-04 & 1926.70 \\
 & 128 & 31849.3 & 63452.8920 & 9.91e-04 & 1925.60 \\
\end{tabular}
\caption{Averaged results of AC-FISTA for 10 independent trials with tolerance $\texttt{tol}=10^{-3}$.}
\end{table}
In this example, we had to select $\texttt{tol} = 10^{-3}$ for AC-FISTA. It is evident that this reduced accuracy leads to higher average objective values. For this problem class, AC-FISTA is clearly outperformed by both second order methods.

\subsection{Nonconvex Image Restoration}

In this section we apply the algorithms to image restoration using real-world data. The problem is the same as in \cite{lechner_2022,liu_pan_wu_yang_2024}. The goal is to find an approximation $x \in \R^n$ of the original image $x^{true} \in \R^n$ from a noisy blurred image $b \in \R^n$ and a blur operator $A \in \R^{n \times n}$, i.e., we seek $x$ with $Ax \approx b$. The objective function incorporates a regularization term $\lambda \Vert Bx \Vert_1$ to ensure smooth gradations and antialiasing in the final image, where $B \colon \R^n \to \R^n$ is a two-dimensional discrete Haar wavelet transform. The problem can be expressed as
\begin{align*}
\min_{x} \sum_{i=1}^m \log(1+(Ax-b)_i) + \lambda \Vert Bx \Vert_1,
\end{align*}
with $\lambda > 0$. Making use of the orthogonality of $B$, the problem can be reformulated equivalently as
\begin{align*}
\min_{y} \sum_{i=1}^m \log(1+(AB^\top y-b)_i) + \lambda \Vert y \Vert_1,
\end{align*}
which clearly is an instance of the problem class considered in section 6.3. 

The test setup being identical to \cite{lechner_2022,liu_pan_wu_yang_2024}, we select the $256 \times 256$ grayscale image \texttt{cameraman.tif} as the test image $x^{true} \in \R^n$ with $n = 256^2$. The blur operator $A$ is a $9 \times 9$ Gaussian filter with a standard deviation of 4, and $B$ is a two-dimensional discrete Haar wavelet of level 4. The noisy image $b$ is created by applying $A$ to the original cameraman test image $x^{true}$ and adding Student’s t-noise with degree of freedom 1 and rescaled by $10^{-3}$. For each $\lambda \in \{10^{-2},10^{-3},10^{-4}\}$, we run the three solvers with $y^{init} = Bb$ and $\texttt{tol}=10^{-5}$ for $10$ independent trials. Here we decided to use $\nu_{\text{min}} = 10^{-4}$ instead of $10^{-8}$. The reason for this change is that in this test scenario, instances where the subproblem couldn't be solved within the desired maximum number of iterations were much more frequent. Consequently, a significantly higher number of unsuccessful iterations occurred. It is noteworthy that these unsuccessful iterations tend to negatively affect IRPNM-reg more than IRPNM-ls. This is because the line search enables the algorithm to still make some progress, whereas IRPNM-reg simply repeats solving the same subproblem with a larger regularization parameter. Given that subproblems become more challenging to solve with smaller regularization parameters, selecting $\nu_{\text{min}} = 10^{-4}$ instead of $10^{-8}$ notably reduced the number of unsuccessful iterations and consequently enhanced the performance of IRPNM-reg. Additionally, for this particular example, we experimented with a hybrid approach, IRPNM-reg-ls, which combines both methods. In IRPNM-reg-ls, a line search is conducted whenever an unsuccessful iteration occurs. Table 9 presents the averaged number of (outer) iterations, objective values, residuals and running times for the three second order methods and AC-FISTA. 
\begin{table}[H]
\centering
\begin{tabular}{c|cccc|cccc}
& \multicolumn{4}{c|}{IRPNM-reg} & \multicolumn{4}{c}{IRPNM-ls} \\
\hline
$\lambda$ & iter & $F(x)$ & $\Vert r(x) \Vert$ & time & iter & $F(x)$ & $\Vert r(x) \Vert$ & time \\
\hline
 1e-2 & 97.4 & 11245.2731 & 9.84e-05 & 200.15 & 99.3 & 11245.2731 & 9.77e-05 & 201.37 \\
 1e-3 & 115.1 & 1199.4475 & 9.38e-05 & 465.71 & 113.1 & 1199.4475 & 8.77e-05 & 427.08 \\
 1e-4 & 122.7 & 146.9925 & 9.10e-05 & 709.44 & 121.3 & 146.9927 & 9.55e-05 & 667.09 \\
\hline
& \multicolumn{4}{c|}{IRPNM-reg-ls} & \multicolumn{4}{c}{AC-FISTA} \\
\hline
1e-2 & 97.4 & 11245.2731 & 9.84e-05 & 199.33 & 2086.7 & 11245.2731 & 9.88e-05 & 279.63 \\
1e-3 & 113.3 & 1199.4475 & 9.66e-05 & 445.34 & 3486.9 & 1199.3795 & 9.86e-05 & 494.03 \\
1e-4 & 118.7 & 146.9926 & 9.20e-05 & 647.46 & 6825.9 & 146.7919 & 9.85e-05 & 908.78
\end{tabular}
\caption{Averaged results of IRPNM-reg, IRPNM-ls, IRPNM-reg-ls and AC-FISTA for 10 independent trials with tolerance $\texttt{tol}=10^{-4}$.}
\end{table}
All three second order methods produce similar objective values for all instances, with IRPNM-ls showing slightly better performance than IRPNM-reg across all different choices of the regularization parameter $\lambda$. The hybrid method IRPNM-reg-ls yields similar results as IRPNM-ls, performing slighty worse for $\lambda=10^{-3}$ and slightly better for $\lambda = 10^{-4}$. We can see that AC-FISTA converges (on average) to slightly better stationary points than the second order methods for $\lambda = 10^{-3}$ and $\lambda = 10^{-4}$. Additionally, it demonstrates good runtime performance.
\begin{figure}[H]
\centering
\begin{subfigure}[b]{0.25\textwidth}
\includegraphics[width=\textwidth]{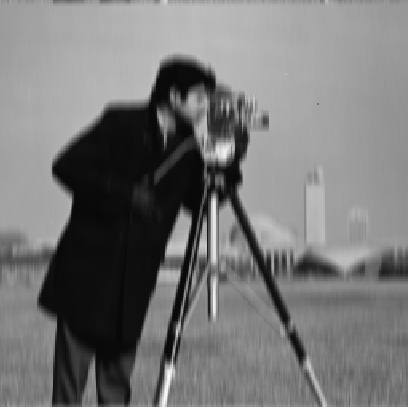}
\caption{Noisy blurred image}
\end{subfigure}
\begin{subfigure}[b]{0.25\textwidth}
\includegraphics[width=\textwidth]{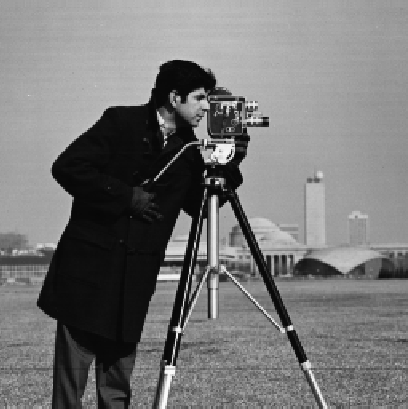}
\caption{Original image}
\end{subfigure}
\begin{subfigure}[b]{0.25\textwidth}
\includegraphics[width=\textwidth]{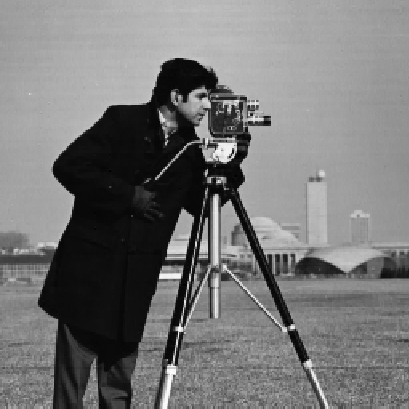}
\caption{IRPNM-reg}
\end{subfigure}
\caption{Nonconvex image restoration with IRPNM-reg for $\lambda = 10^{-2}$ and $\texttt{tol}=10^{-4}$ (reconstructed images with IRPNM-ls and AC-FISTA are omitted since they are indistinguishable from those obtained with IRPNM-reg).}
\end{figure}
\section{Final Remarks}
In this work, we introduced an inexact proximal Newton method without line search, ensuring global convergence through a careful update strategy for the regularization parameter based on the previous iteration. A superlinear convergence rate of the iterate sequence was shown under a local Hölderian error bound condition and confirmed in numerical tests across various problem classes. \
Our findings suggest several avenues for future research. Similar convergence results, i.e. without requiring a global Lipschitz assumption on $\nabla f$, may be achievable for an inexact proximal Newton method using line search. Exploring analogous outcomes for a proximal Quasi-Newton method is another potential research direction. Additionally, a convergence analysis for $\delta=0$ could be pursued under the assumption that $F$ is a KL (Kurdyka-\L ojawiewicz) function, following the approach in \cite{liu_pan_wu_yang_2024}. \\

\setlength{\parindent}{0cm}

\small{\textbf{Acknowledgements} \, The authors express their gratitude to Jiaming Liang from Yale University, Renato D. C. Monteiro from the Georgia Institute of Technology, Ruyu Liu and Shaohua Pan from the South China University of Technology, as well as Yuqia Wu and Xiaoqi Yang from the Hong Kong Polytechnic University for generously sharing their code.} \\

\small{\textbf{Data availability} \, The test problems in Section 6 are based on randomly generated data, except for the restoration of a blurred image. The Matlab code employed for both data generation and numerical tests is accessible upon request from Simeon vom Dahl (simeon.vomdahl@uni-wuerzburg.de)} 

\bibliography{RegProxNewton_PP}
\end{document}